\documentclass{scrartcl}

\usepackage{todonotes,xcolor}

\usepackage{amsmath}
\usepackage{amsthm}
\usepackage{amssymb}
\usepackage{amsfonts}
\usepackage{mathrsfs}
\usepackage{color}
\usepackage[utf8]{inputenc}
\usepackage{enumitem}
\usepackage{multicol}
\usepackage[english]{babel}
\usepackage[babel]{csquotes}
\usepackage{mathtools}
\usepackage{rotating}
\usepackage{verbatim}
\usepackage{thmtools}
\usepackage{makecell,booktabs,amsxtra}

\usepackage{adjustbox}
\usepackage{ifthen}
\usepackage[position=top]{subfig}
\usepackage{emptypage}
\usepackage{pgfplots}
\pgfplotsset{xticklabels={}, yticklabels={} width=7cm,compat=1.8}
\usepackage{pgfplotstable}
\pgfmathsetseed{1138} 
\pgfplotstableset{ 
	create on use/x/.style={create col/expr={42+2*\pgfplotstablerow}},
	create on use/y/.style={create col/expr={(0.6*\thisrow{x}+130)+5*rand}}
}
\pgfplotstablenew[columns={x,y}]{30}\loadedtable

\usepackage[ruled]{algorithm2e}

\SetAlFnt{\small}
\SetAlCapFnt{\small}
\SetAlCapNameFnt{\small}
\SetAlCapHSkip{0pt}
\IncMargin{-\parindent}

\usepackage{url}
\usepackage[square,numbers,sort]{natbib}

\usepackage{pgfplots,pgfplotstable}
\pgfplotsset{compat=1.8}

\usepackage[bookmarks=false,colorlinks=true,linkcolor=blue,urlcolor=blue,citecolor=red,pdfborder={0 0 0.5}]{hyperref}

\theoremstyle{plain}
\newtheorem{theorem}{Theorem}
\newtheorem{lemma}[theorem]{Lemma}

\newtheorem{corollary}[theorem]{Corollary}
\newtheorem{conjecture}[theorem]{Conjecture}

\declaretheoremstyle[
notefont=\bfseries, notebraces={}{},
bodyfont=\normalfont\itshape,
headformat=\NAME \NOTE
]{nopar}
\declaretheorem[style=nopar,name=Corollary]{corollary*}
\declaretheorem[style=nopar,name=Theorem]{theorem*}

\theoremstyle{definition}
\newtheorem{definition}[theorem]{Definition}

\newtheorem{observation}[theorem]{Observation}

\newtheorem*{acknowledgements*}{Acknowledgements}
\newtheorem*{DeclarationonGenerativeAI*}{Declaration on Generative AI}

\newcount \mycount
\newcount \mycountb

\DeclareMathOperator{\csp}{CSP}

\DeclareMathOperator{\A}{{\mathbb A}}
\DeclareMathOperator{\B}{{\mathbb B}}
\DeclareMathOperator{\C}{{\mathbb C}}

\DeclareMathOperator{\F}{{\mathbb F}}

\DeclareMathOperator{\stCon}{st-Con}
\newcommand{\lin}[1]{\operatorname{3Lin}_{#1}}

\DeclareMathOperator{\wt}{weight}
\newcommand{\real}[2]{\operatorname{real}^{#2}(#1)}
\newcommand{\zzreal}[2]{\operatorname{zz-real}^{#2}(#1)}
\DeclareMathOperator{\extremes}{extremes}
\DeclareMathOperator{\last}{last}
\DeclareMathOperator{\first}{first}
\DeclareMathOperator{\trace}{trace}
\DeclareMathOperator{\struc}{struc}
\DeclareMathOperator{\expansion}{expansion}
\DeclareMathOperator{\maxArity}{maxk}
\renewcommand{\lneq}{<}
\newcommand{\m}{|}

\providecommand{\dotdiv}{
  \mathbin{
    \vphantom{+}
    \text{
      \mathsurround=0pt 
      \protect\ooalign{
        \noalign{\kern-.45ex}
        \hidewidth$\smash{\cdot}$\hidewidth\cr 
        \noalign{\kern.45ex}
        $-$\cr 
      }%
    }%
  }%
}
\providecommand{\cupdot}{
  \mathbin{
    \vphantom{+}
    \text{
      \mathsurround=0pt 
      \protect\ooalign{
        \noalign{\kern-.4ex}
        \hidewidth$\smash{\cdot}$\hidewidth\cr 
        \noalign{\kern.4ex}
        $\cup$\cr 
      }%
    }%
  }%
}

\theoremstyle{definition}

\theoremstyle{definition}

\makeatletter
\newcommand\footnoteref[1]{\protected@xdef\@thefnmark{\ref{#1}}\@footnotemark}
\makeatother

\makeatletter
\newcommand*{\rom}[1]{\expandafter\@slowromancap\romannumeral #1@}
\makeatother

\makeatletter
\def\blfootnote{\xdef\@thefnmark{}\@footnotetext}
\makeatother

\usepackage{setspace}

\usepackage{epigraph}
\DeclareMathOperator{\HornSAT}{Horn-3SAT}

\usepackage{siunitx}
\newcommand{\separatingPotato}{
\begin{tikzpicture}[scale=0.75]
  \node (X) at (0,2.60) {$x$};
  \node (Z) at (6.2,2.60) {$z$};
  \draw (X) -- node[above] {$\omega$} (Z);

  \draw (0,0)      ellipse [x radius=0.80cm,y radius=1.75cm];
  \draw (6.2,0) ellipse [x radius=0.80cm,y radius=1.75cm];
  \node at (0,-2.08)   {$L_{r_n}(x)$};
  \node at (6.2,-2.08) {$L_{r_n}(z)$};

  \draw (0,0.90)  ellipse [x radius=0.34cm,y radius=0.42cm];
  \draw (6.2,0.50) ellipse [x radius=0.5cm,y radius=1.1cm];
  \node (Lx) at (1.60,1.60) {$L^a_{\maxArity}(x)$};
  \draw[->] (0.7,1.35) -- (0.3,1.08);
  \node at (8.2,1.35) {$L^a_{\maxArity}(z)$};
  \draw[->] (7.2,1.35) -- (6.62,1.08);

  \node (a) at (0,0.90)  {$a$};
  \node (b) at (0,-0.55) {$b$};
  \node[] (c) at (6.29,0.55) {$c$};

  \draw (0.2,0.9) -- (1.20,0.90)
        -- (1.00,0.75) -- (1.80,0.75) 
        -- (1.50,0.60) -- (2.40,0.60) 
        -- (1.80,0.430) -- (2.20,0.430)
        -- (2.00,0.30) -- (3.50,0.30) 
        -- (3.00,0.6) -- (4.20,0.60) 
        -- (3.90,0.43) -- (4.80,0.43)
        -- (4.35,0.55) -- (c);

  \draw (b) -- (c);

\end{tikzpicture}
}

\newcommand{\walkExtension}{
\begin{tikzpicture}[scale=0.75]
  \node (X) at (0,2.60) {$x$};
  \node (Z) at (6.2,2.60) {$y_i$};
  \draw (X) -- node[above] {$\omega$} (Z);
  \node (Y) at (9.2,2.60) {$y_o$};
  \draw (Z) -- node[above] {$\pi_{i,o}(R)$} (Y);

  \draw (0,0)      ellipse [x radius=0.80cm,y radius=1.75cm];
  \draw (6.2,0) ellipse [x radius=0.80cm,y radius=1.75cm];
  \draw (9.2,0) ellipse [x radius=0.80cm,y radius=1.75cm];
  \node at (0,-2.2)   {$L(x)$};
  \node at (6.2,-2.2) {$L(y_i)$};
  \node at (9.2,-2.2) {$L(y_o)$};

  \node (a) at (0,0.90)  {$a$};
  \node (b) at (0,-0.55) {$b$};

  \node (si) at (6.2,0.90)  {$s_i$};
  \node (ti) at (6.2,-0.55) {$t_i$};

  \node (so) at (9.2,0.90)  {$s_o$};
  \node (ro) at (9.2,0.1825)  {$r_o$};
  \node (to) at (9.2,-0.55) {$t_o$}; 
  
  \draw[very thick] (0.2,0.9) 
        -- (1.00,0.75) -- (1.80,0.75) 
        -- (1.50,0.60) -- (2.40,0.60) 
        -- (1.80,0.430) -- (2.20,0.430)
        -- (2.00,0.30) -- (3.50,0.30) 
        -- (3.00,0.6) -- (4.20,0.60) 
        -- (3.90,0.43) -- (4.80,0.43)
        -- (4.5,0.55) -- (8.80,0.55)
        -- (6.2,0.3) -- (ro)
        ;
  \draw[very thick] (0.2,-0.55) 
        -- (1.00,-0.25) -- (2.80,-0.25) 
        -- (1.20,-0.0) -- (4.20,0.00) 
        -- (3.20,-0.2) -- (5.80,-0.2)
        -- (4.5,0.0) -- (6.20,0.0)
         -- (ro)
        ;

  \draw (a) -- (si);
  \draw[very thick] (b) -- (ti);
  \draw (so) -- (si);
  \draw[very thick] (to) -- (ti);

\end{tikzpicture}
}

\newcommand{\abcd}{
\begin{tikzpicture}[scale=0.75]
  \node (X) at (0,1.90) {$x$};
  \node (Z) at (6.2,1.90) {$y$};
  \draw (X) -- node[above] {$\omega$} (Z);

  \node (a) at (0,0.90)  {$a$};
  \node (b) at (0,-0.85) {$b$};
  
  \node (si) at (3,0.90)  {$s_i$};
  \node (ri) at (3,-0.0) {$r_i$};
  \node (ti) at (3,-0.85) {$t_i$};
  
  \node (si2) at (4.6,0.90)  {$s_{i+1}$};
  \node (ri2) at (4.6,-0.2) {$r_{i+1}$};
  \node (ti2) at (4.6,-0.85) {$t_{i+1}$};
  
  \node (c) at (6.2,0.90)  {$c$};
  \node (d) at (6.2,-0.85) {$d$};

  \draw
    (a) -- (ri)
    (ri)--(ti2)
    (ti)--(ti2)
    (b)--(ti)
  ;
  \draw
    (a) -- (si)
    (si)--(si2)
    (si2)--(c)
    (ri)--(ri2)
    (ri2)--(d)
    (ti2)--(d)
  ;

\end{tikzpicture}
}

\newcommand{\structures}{
\begin{tikzpicture}
\node (B) at (8,0) {$\B$};
\node (C) at (5.5,1.5) {$(\C,M)$};
\node (C2) at (5.5,-1.5) {$(\C,M')$};
\node (A) at (3,0.5) {$(\A,L)$};
\node (A2) at (3,-0.5) {$(\A,L')$};
\path [->]
    (C) edge (B)
    (C) edge[bend left] node[above] {$k_n$} (B)
    (C2) edge node[below] {$k_n$} (B)
    (A) edge (B)
    (A2) edge (B)
    (A) edge node[above] {$h$} (C)
    (A2) edge node[above] {$h$} (C2)
;
\draw (6.8,-0.5)--(7.25,-0.75);
\draw (6.8,0.5)--(7.25,0.75);
\draw (5.5,-0.1)--(5.95,-0.35);
\end{tikzpicture}
}

\title
{Conservative CSPs in Logspace are in Symmetric Linear Datalog\\\large(under complexity theoretic assumptions)}
\author{Florian Starke\thanks{
The author has been funded by the European Research Council (Project POCOCOP, ERC Synergy Grant
101071674). Views and opinions expressed are however those of the authors only and do not necessarily reflect
those of the European Union or the European Research Council Executive Agency. Neither the European Union nor
the granting authority can be held responsible for them.
} \\ Institut f\"ur Algebra, TU Dresden}
\date{\today}

\begin{document}

\maketitle

\begin{abstract}
We show that the constraint satisfaction problem of any  finite conservative structure that can not pp-construct  $\stCon$ (whose CSP is NL-complete) or $\lin2$ (whose CSP is Mod$_2$L-complete)  is solved by a symmetric linear Datalog program and therefore in L. 
\end{abstract}


\section{Introduction}

In 2017 the Feder-Vardi conjecture, stating that finite domain CSPs admit a P-NP dichotomy, was solved \cite{ZhukFVConjecture,BulatovFVConjecture}. Similar results for other complexity classes within P have been conjectured but remain unproven.
\begin{conjecture}[\cite{EgriLaroseTessonLogspace}]\label{con:symmetricDatalog}
    Let $\B$ be a finite structure, then the following are equivalent
    \begin{enumerate}
        \item $\csp(\B)$ is solved by a symmetric linear Datalog program,
        \item the polymorphisms of $\B$ contain a Hagemann-Mitschke chain and a 3-4WNU,
        \item $\B$ can neither pp-construct $\stCon$ nor $\lin p$ for any prime $p$.
    \end{enumerate}
    If one of the items is true, then $\csp(\B)$ is in L, otherwise $\csp(\B)$ is NL-hard or Mod$_p$L-hard for some prime $p$.
\end{conjecture}

\begin{conjecture}[\cite{LinearDatalog}]\label{con:linearDatalog}
    Let $\B$ be a finite structure, then the following are equivalent
    \begin{enumerate}
        \item $\csp(\B)$ is solved by a linear Datalog program,
        \item the polymorphisms of $\B$ contain a Kearnes-Kiss chain,
        \item $\B$ can neither pp-construct $\HornSAT$ nor $\lin p$ for any prime $p$.
    \end{enumerate}
    If one of the items is true, then $\csp(\B)$ is in NL, otherwise $\csp(\B)$ is P-hard or Mod$_p$L-hard for some prime $p$.
\end{conjecture}

Kazda proved that Conjecture~\ref{con:symmetricDatalog} follows from Conjecture~\ref{con:linearDatalog} \cite{Kazda-n-permute}.
In 2015 Dalmau, Egri, Hell, Larose, and Rafiey verified Conjecture~\ref{con:symmetricDatalog} for finite conservative structure, whose relations are of arity at most 2 (Theorem 1 in \cite{DalmauLICS15}). In this article we will generalize their result to all finite conservative structures. Section~\ref{sec:higherArity} contains the statements that allows us to make the arguments for binary signatures work for higher arity signatures. The rest of the proof is essentially the same as in~\ref{con:symmetricDatalog}.

\section{Preliminaries}
For $n\geq m\geq 0$ let $[m,n]$ denote the set $\{m,\dots, n\}$ and let $[n]$ denote the set $\{1,\dots, n\}$.
For a tuple $t\in A^k$ and $J\subseteq [k]$ define $t_J\coloneqq (t_j)_{j\in J}$.
For a relation $R\subseteq A^k$ define $\pi_J(R)\coloneqq\{t_J\mid t\in R\}$, $\first(R)\coloneqq\pi_1(R)$, $\last(R)\coloneqq\pi_k(R)$, and $\extremes(R)\coloneqq\pi_{1,k}(R)$. For a second set $I\subseteq [k]$ define $\pi_{I,J}(R)\coloneqq\{(t_I,t_J)\mid t \in R\}$.

\subsection{Structures}
A $\tau$-structure $\B$ is \emph{conservative} if for all $U\subseteq B$ there is a unary relationsymbol $R_U\in\tau$ with $R_U^{\B}=U$. A \emph{primitive positive (pp-)formula} is a first order formula that only uses existential quantification and conjunction of atomic formulas ($\bot$ and $x=y$ are allowed as atomic formulas). A relation $R\subset B^k$ is \emph{pp-definable} in $\B$ if there is a pp-formula $\phi(x_1,\dots,x_k)$ such that $R$ consists of the satisfying assignments of $\phi$ in ${\B}$. 
For a structure $\A$ define $\expansion(\A)$ to be the expansion of $\A$ by all relations $\pi_J(R^{\A})$ for all relationsymbols $R\in\tau$ and all $J\subset [k]$, where $k$ is the arity of $R$. Note that all relations of $\expansion(\A)$ are pp-definable in $\A$.
\begin{align*}
    \text{Throughout this article fix a signature $\tau$ and a finite conservative $\tau$-structure $\B$.}
\end{align*} Let $\maxArity$ be the maximal arity of a relationsymbol in $\tau$. Since adding pp-definable relations does not change the complexity of the CSP, we can without loss of generality assume $\B=\expansion(\B)$.

A list is a map $L\colon A\to2^B$. A $f$ homomorphism from a structure $\A$ to $\B$ \emph{respects} $L$ if $f(x)\in L(x)$ for all $x\in A$.
The \emph{(list) constraint satisfaction problem} of $\B$, denoted $\csp(\B)$, is the set of all $(\A,L)$, where $\A$ is a finite $\tau$-structure and $L\colon A\to2^B$ such that there is a homomorphism from $\A$ to $\B$ respecting $L$. The tuple $(\A,L)$ is called an \emph{instance} of $\csp(\B)$. If there is a homomorphism $f$ from $\A$ to $\B$ respecting $L$, then this instance is called \emph{solvable} and $f$ is called a \emph{solution} of $(\A,L)$. 
For $a,b$ define the relations
\begin{align*}
    &O_{a,b}\coloneqq \{(a,a),(a,b),(b,b)\} &&\text{and}
&&P_{a,b}\coloneqq\{(b,b,b),(a,a,b),(a,b,a),(b,a,a)\}.
\end{align*}
Define the structures $\stCon\coloneqq(\{0,1\},\{0\},\{1\},O_{0,1})$ and $\lin2\coloneqq(\{0,1\},\{0\},\{1\},P_{0,1})$. Note that the relation $\{(0,0,0),(1,1,0),(1,0,1),(0,1,1)\}$ is pp-definable in $\lin2$.
We will need the following well known facts about these two structures.
\begin{theorem}\label{thm:stConIffHM}
    A structure $\A$ can not pp-construct $\stCon$ if and only if its polymorphisms contain a Hagemann-Mitschke chain.
\end{theorem}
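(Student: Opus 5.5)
This is a known result, labelled a well-known fact, so the plan is to assemble it from standard universal-algebraic theorems rather than prove it from scratch.

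The starting point is the basic correspondence between pp-constructions and height-1 identities. $\A$ pp-constructs $\B'$ if and only if there is a minion homomorphism $\mathrm{Pol}(\A)\to\mathrm{Pol}(\B')$. Moreover, for a finite $\B'$, the height-1 conditions satisfied by $\mathrm{Pol}(\A)$ determine which such $\B'$ can be constructed.

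\textbf{Easy direction.} Suppose $\mathrm{Pol}(\A)$ contains a Hagemann--Mitschke chain $p_1,\dots,p_n$, that is, ternary operations with
\[
x=p_1(x,y,y),\quad p_i(x,x,y)=p_{i+1}(x,y,y),\quad p_n(x,x,y)=y .
\]
These identities are not height 1, because the first and last involve bare variables. So I work with the clone homomorphism formulation instead. $\A$ pp-constructs $\stCon$ if and only if $\stCon$ is homomorphically equivalent to a pp-power of $\A$, which yields a minion homomorphism. Equivalently, one can use the standard fact that a chain of this kind is preserved under minion homomorphisms into idempotent reducts.

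It is cleaner to argue directly. Assume $\A$ pp-constructs $\stCon$. Then a core $\C$ of a pp-power of $\A$ is homomorphically equivalent to $\stCon$, hence equals $\stCon$ since $\stCon$ is a core. Polymorphisms of $\A$ induce polymorphisms of the pp-power coordinatewise, and the chain identities survive this. Composing with homomorphisms to and from the core turns the chain into one that is idempotent modulo the retraction. The idempotent case follows because $\stCon$ contains the singleton unary relations.

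Now apply the chain on $\stCon$ to the tuples $(0,0),(0,1),(1,1)$ taken from $O_{0,1}$. By induction on $i$, $p_i(0,1,1)=0$ forces $p_i(0,0,1)=0$ by preservation of $O_{0,1}$ and monotonicity. Then the chain identity gives $p_{i+1}(0,1,1)=0$. At the end this contradicts $p_n(0,0,1)=1$. So the monotonicity forced by $O_{0,1}$ blocks the chain.

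\textbf{Hard direction.} If $\A$ cannot pp-construct $\stCon$, then $\mathrm{Pol}(\A)$ has a Hagemann--Mitschke chain. This is the substantive part, and I would cite it: it is Hagemann--Mitschke's characterisation of congruence $n$-permutability, combined with results of Larose--Tesson, Barto--Opršal--Pinsker and, for this exact formulation, Valeriote--Willard / Barto--Kozik. The contrapositive is the cleaner statement: if $\mathrm{Pol}(\A)$ (after passing to the core and adding constants) has no HM chain, then the generated variety is not $n$-permutable for any $n$. By Hobby--McKenzie-type arguments, or Valeriote--Willard's result that an idempotent variety is $n$-permutable for some $n$ iff it omits a "compatible partial order" situation, there is a subalgebra with a nontrivial compatible order, giving a pp-definition of $O_{a,b}$ on two elements $a<b$. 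Together with the constants $\{a\},\{b\}$, this pp-constructs $\stCon$.

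The main obstacle is this Valeriote--Willard step: producing a pp-definable two-element ordered subset from the mere failure of all HM chains. I would not reprove it, only reduce to it.
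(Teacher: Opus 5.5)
The paper does not prove this statement; it is quoted as a well-known fact.

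\textbf{The easy direction is correct.} It is the standard argument. Pass the chain to a pp-power coordinatewise, then to its core $\stCon$ via $r\circ p_i\circ i$, where $r\circ i$ is the identity because of the singleton relations. Monotonicity then propagates $p_i(0,1,1)=0$ along the chain until it contradicts $p_n(0,0,1)=1$.

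\textbf{The hard direction reduces to a false claim.} You say that failure of all HM chains yields a subalgebra with a nontrivial compatible order, hence a pp-definition of $O_{a,b}$ on two elements $a<b$. You single this out as the main obstacle, but that target is unreachable in general.

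Let $\A$ have domain $\{0,1\}^2$, the four singleton unary relations, and $R=\{(u,v)\mid u_1\le v_1\}$. Every idempotent operation $(u^1,\dots,u^n)\mapsto(G(u^1_1,\dots,u^n_1),H(u^1,\dots,u^n))$ with $G$ monotone and $H$ otherwise arbitrary is a polymorphism.
\begin{itemize}
\item If $c_1=0$ and $d_1=1$, a binary such operation with $G=\vee$ can send $(c,d)$ to $(1,1-d_2)\notin\{c,d\}$. So $\{c,d\}$ is not pp-definable.
\item If $a_1=b_1$, then $M(u,v,w)=(u_1\vee v_1\vee w_1,\,u_2+v_2+w_2 \bmod 2)$ maps the columns $(a,a),(a,b),(b,b)$ to $(b,a)\notin O_{a,b}$.
\item The same freedom in $H$ rules out a compatible partial order with two comparable elements on any subalgebra.
\end{itemize}
Yet $u\mapsto u_1$ shows that $(A;\{(0,0)\},\{(1,0)\},R)$ is homomorphically equivalent to $\stCon$, so by your easy direction $\A$ has no HM chain. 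The order lives on a quotient, not on a subset of $A$.

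\textbf{What you need instead} is a pp-power together with homomorphic equivalence. This is elementary (Hagemann's free-algebra argument); neither Valeriote--Willard nor tame congruence theory is required.
\begin{itemize}
\item For a core you may add all constants: a core pp-constructs this expansion, and the expansion has fewer polymorphisms.
\item The binary polymorphisms then form a pp-definable set $F\subseteq A^{A^2}$, in which the projections $x,y$ are pp-definable singletons.
\item The relation $\rho=\{(p(x,y,y),p(x,x,y))\mid p\text{ a ternary polymorphism}\}$ is pp-definable, reflexive on $F$, and contains $(y,x)$.
\item $(x,y)\in\rho^n$ holds exactly when there is an HM chain of length $n$.
\end{itemize}
If there is no chain, take $m$ large so that $\rho^m$ is the transitive closure. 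Then $0\mapsto y$, $1\mapsto x$ is a homomorphism from $\stCon$ to the pp-power $(A^{A^2};\{y\},\{x\},\rho^m)$. Conversely, the map sending $t$ to $0$ exactly when $(t,y)\in\rho^m$, and to $1$ otherwise, is a homomorphism back.

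\textbf{The passage to the core is only safe in one direction.} Your parenthetical reduction to the core with constants needs an HM chain of the core to lift back to $\A$. With $r\colon\A\to\C$ and $i\colon\C\to\A$, an HM chain of the core only gives $p_1(x,y,y)=i(r(x))$ on $\A$. Concretely, $(\{0,1\};O_{0,1})$ without constants has no HM chain, yet it cannot pp-construct $\stCon$ because it has a constant polymorphism. So the statement must be read either for cores, as in the paper's conservative setting, or with the HM chain taken as a height-1 condition.
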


\begin{theorem}\label{thm:LinpIff34WNU}
    A  structure $\A$ can not pp-construct $\lin p$ for any prime $p$ if and only if its polymorphisms contain a 3-4WNU.
\end{theorem}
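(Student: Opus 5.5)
The plan is to reduce Theorem~\ref{thm:LinpIff34WNU} to known characterizations of bounded width, using the Barto--Opršal--Pinsker correspondence: $\A$ pp-constructs $\C$ if and only if there is a minion homomorphism from $\operatorname{Pol}(\A)$ to $\operatorname{Pol}(\C)$. In particular, every height-1 condition satisfied by $\operatorname{Pol}(\A)$ is also satisfied by $\operatorname{Pol}(\C)$. Recall that a 3-4WNU is a pair of weak near-unanimity polymorphisms $w$ (ternary) and $v$ (quaternary) with
\[w(y,x,x)=v(y,x,x,x).\]
This is a height-1 condition.

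For the direction "3-4WNU implies no pp-construction of $\lin p$", I argue by contradiction. Suppose $\A$ has a 3-4WNU and pp-constructs $\lin p$ for some prime $p$. Then $\lin p$ also has a 3-4WNU $w,v$.

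Because $\lin p$ contains the singleton unary relations, every polymorphism is idempotent. Because it contains the affine relation $x+y+z=c$, every polymorphism is an affine combination
\[x_1,\dots,x_n \mapsto \sum a_i x_i \pmod p \quad\text{with } \sum a_i = 1.\]
This is the standard description of the clone of an affine module; I would only sketch it here.

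The weak near-unanimity identities force all coefficients of $w$ to equal some $a$ with $3a=1$, and all coefficients of $v$ to equal some $d$ with $4d=1$. The linking identity $w(y,x,x)=v(y,x,x,x)$ compares the coefficient of $y$ and gives $a=d$. Hence $a=4a-3a=1-1=0$, contradicting $3a=1$.

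For the converse, assume $\A$ has no 3-4WNU. By the theorem of Kozik, Krokhin, Valeriote and Willard, combined with the Barto--Kozik and Larose--Zádori characterization, the idempotent algebra of the core of $\A$ then does not omit types $1$ and $2$ in the sense of tame congruence theory. Since taking cores and adding constants are pp-constructions, there are two cases.

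In the first case, type $1$ occurs, meaning there is no Taylor polymorphism. Then $\A$ pp-constructs every finite structure, in particular $\lin 2$.

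In the second case, the algebra is Taylor but admits type $2$. Then some subalgebra has a quotient that is (polynomially equivalent to) an affine module over a finite ring. From this I would extract a prime $p$ and a pp-construction of $\lin p$, via pp-definitions of subalgebras, congruences and quotients, followed by restricting to a one-dimensional $\mathbb Z_p$-module.

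The main obstacle is this last extraction: passing from a type-$2$ quotient to exactly $\lin p$ with constants. I would rely on the published result of Larose and Zádori, with its refinement by Barto, Krokhin and Willard, that a finite idempotent Taylor algebra failing to omit type $2$ pp-interprets linear equations over some $\mathbb Z_p$. I would cite this rather than reprove it, which is consistent with the paper's description of Theorem~\ref{thm:LinpIff34WNU} as a well-known fact.
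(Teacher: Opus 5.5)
The paper gives no proof of this statement: it is listed among the well-known facts, with definitions deferred to \cite{StarkeDiss}. Your plan is the standard argument and is correct for finite $\A$. You check the easy direction directly, using the description of $\operatorname{Pol}(\lin{p})$ as the idempotent affine combinations over $\mathbb{Z}_p$, and you defer the converse to Kozik--Krokhin--Valeriote--Willard together with Larose--Z\'adori and Barto--Krokhin--Willard. That is essentially the same reliance on the literature as the paper.

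Reading a 3-4WNU as a height-1 condition, without idempotency, is the right choice and is in fact necessary. With idempotent WNUs the statement fails for non-cores: the disjoint union of $K_3$ and a loop has constant, hence height-1, 3-4WNU polymorphisms and pp-constructs no $\lin{p}$, yet it has no idempotent WNU. Your passage to the idempotent core uses that $\A$ and its core expanded by constants pp-construct each other in both directions. One direction transfers a 3-4WNU back to $\A$; the other transfers the pp-construction of $\lin{p}$ to $\A$.
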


\begin{theorem}\label{thm:Lin2Iff34WNU}
    A conservative structure $\A$ can not pp-construct $\lin2$ if and only if its polymorphisms contain a 3-4WNU.
\end{theorem}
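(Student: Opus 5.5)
The plan is to reduce Theorem~\ref{thm:Lin2Iff34WNU} to Theorem~\ref{thm:LinpIff34WNU}. By that theorem, a structure has a 3-4WNU polymorphism exactly when it cannot pp-construct $\lin p$ for any prime $p$. So it suffices to show that for a conservative structure $\A$ the following holds: if $\A$ pp-constructs $\lin p$ for some prime $p$, then it already pp-constructs $\lin2$.

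The direction ``3-4WNU $\Rightarrow$ no $\lin2$'' is immediate from Theorem~\ref{thm:LinpIff34WNU} with $p=2$.

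For the converse, assume $\A$ has no 3-4WNU polymorphism. Then $\A$ pp-constructs $\lin p$ for some prime $p$. Equivalently, by the standard algebraic reformulation, the polymorphism clone of $\A$ has no 3-4WNU, hence no cyclic/WNU terms of the relevant arities. By the tame-congruence / local-structure characterisation, this means some finite algebra in the pseudovariety generated by $\operatorname{Pol}(\A)$ has a section of affine type (type 2).

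The key step uses conservativity. Every subset of $A$ is a subuniverse, so every polymorphism restricted to a two-element subset $\{a,b\}$ is a polymorphism of a two-element algebra. I would use the known result (Bulatov's theory of conservative algebras: the coloured graph of edges, with semilattice, majority and affine edges) that a conservative algebra has no affine-type local behaviour unless some two-element subset $\{a,b\}$ is an affine edge. That is, the restriction of $\operatorname{Pol}(\A)$ to $\{a,b\}$ is (contained in) the clone of $\mathbb Z_2$ operations $x-y+z$ plus idempotent projections. Indeed, if every two-element subset carries a semilattice or majority operation, Bulatov's construction glues these edge operations into a 3-4WNU (in fact a conservative WNU of arities 3 and 4 compatible with each other) on all of $A$. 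This contradicts the assumption. Hence some pair $\{a,b\}$ has only affine (Boolean) polymorphisms available.

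On such a pair, restrict $\A$ to $\{a,b\}$ using the unary relation $R_{\{a,b\}}$, which is present by conservativity. The resulting two-element structure is pp-definable in $\A$, and its polymorphisms are exactly the Boolean idempotent affine clone $\langle x\oplus y\oplus z\rangle$. By the Galois connection on the two-element domain, it pp-defines $P_{a,b}$ together with $\{a\},\{b\}$, which are available as $R_{\{a\}},R_{\{b\}}$. Hence $\A$ pp-constructs (indeed pp-interprets) $\lin2$.

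The main obstacle is the gluing step: showing that the absence of an affine two-element edge yields a 3-4WNU on the whole domain. Here I would invoke Bulatov's result that conservative algebras without affine edges admit a conservative ``2-semilattice/majority'' combination and hence WNUs of all arities $\geq3$ satisfying $w_3(y,x,x)=w_4(y,x,x,x)$. If necessary, I would cite the Barto–Kozik/Bulatov characterisation that bounded-width conservative algebras are exactly those with no affine edges, together with the fact that bounded width is equivalent to having a 3-4WNU.
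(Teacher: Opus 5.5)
The paper gives no proof of this statement; it lists it among the ``well known facts'' in the preliminaries and uses it as a black box. Your reduction is a correct way to derive it from the literature. It even gives something slightly sharper: for conservative structures, pp-constructing $\lin{2}$ is equivalent to pp-defining $P_{a,b}$ for some $a\neq b$.

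The direction from a 3-4WNU is Theorem~\ref{thm:LinpIff34WNU} with $p=2$. For the converse, everything rests on the cited result for conservative structures. If every two-element subset admits a polymorphism acting on it as a semilattice or majority operation, then the CSP has bounded width (Bulatov). Hence, by Barto--Kozik together with Kozik--Krokhin--Valeriote--Willard, there is a 3-4WNU. That chain through bounded width is the right citation, rather than a direct ``gluing'' of edge operations into WNUs. All the depth lies here: the statement is essentially this theorem repackaged, so, like the paper, you are quoting rather than proving.

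The two-element step is fine but should be justified explicitly:
\begin{itemize}
\item every idempotent Boolean clone other than the projections contains $\wedge$, $\vee$, majority or minority;
\item the only one containing minority but neither a semilattice nor majority is $\langle x\oplus y\oplus z\rangle$;
\item both this clone and the projection clone preserve $P_{a,b}$.
\end{itemize}

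Two repairs are needed.

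First, drop the detour through $\lin{p}$ and tame congruence theory. You never use it, and the claim that a type-2 section must occur is false. The failure of a 3-4WNU can be witnessed by type 1 alone. For example, the conservative structure on $\{0,1\}$ with all relations has only projections as polymorphisms and generates the variety of sets.

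Second, the sentence ``restrict $\A$ to $\{a,b\}$ \dots\ its polymorphisms are exactly the affine clone'' is wrong if read as the induced substructure. pp-definitions of relations on $\{a,b\}$ may need quantified variables ranging outside $\{a,b\}$. As a counterexample, take the conservative structure on $\{a,b,c\}$ whose only non-unary relation is $S=\{(x,y,z,c)\mid (x,y,z)\in P_{a,b}\}$. Its induced substructure on $\{a,b\}$ is preserved by all idempotent operations, yet $\exists w\,S(x,y,z,w)$ defines $P_{a,b}$.

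Argue directly instead. Every $f\in\operatorname{Pol}(\A)$ maps $\{a,b\}^n$ into $\{a,b\}$, and its restriction there lies in $\langle x\oplus y\oplus z\rangle$ or is a projection, so $f$ preserves $P_{a,b}$. By the finite-domain Galois connection, $P_{a,b}$ is then pp-definable in $\A$. Together with $R_{\{a\}}$, $R_{\{b\}}$ and $R_{\{a,b\}}$, this pp-constructs $\lin{2}$.
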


For a definition of  Hagemann-Mitschke chain, 3-4WNU, and pp- constructions see for example \cite{StarkeDiss}.

\subsection{Datalog}
\label{sect:datalog}
For a detailed introduction, see, e.g.,~\cite{BodDalJournal}. 
Let $\rho$ be finite relational signatures such that $\tau \subseteq \rho$. 
A \emph{Datalog program} $\Pi$ is a finite set of rules of the form
$ \phi_0 \; {:}{-} \; \phi_1\wedge\dots\wedge\phi_n$,
where each $\phi_i$ is an atomic $\rho$-formula. The formula $\phi_0$ is called the \emph{head} of the rule, and $\phi_1\wedge\dots\wedge\phi_n$ is called the \emph{body} of the rule. The symbols in $\tau$ are called \emph{EDBs} (\emph{extensional database predicates}) and the other symbols from $\rho\setminus\tau$ are called \emph{IDBs} (\emph{intensional database predicates}). In the rule heads, only IDBs are allowed. There is one special IDB $G$ of arity 0, which is called the \emph{goal predicate}. 
IDBs might also appear in the rule bodies. A \emph{derivation} of $\Pi$ on a finite $\tau$-structure $\A$ is a sequence $\A_0\vdash_{R_0} \dots\vdash_{R_{n-1}} \A_n$ of $\rho$-structures $\A_0,\dots,\A_n$ and rules $R_0,\dots,R_{n-1}$ of $\Pi$ such that 
\begin{itemize}
	\item the $\tau$-reduct of $\A_0$ is $\A$ and $P^{\A_0}=\emptyset$ for all $P\in\rho\setminus\tau$ and
	\item for all $i\in[0,n-1]$ the structure $\A_{i+1}$ is obtained from $\A_i$ by adding a tuple $(s(x_1),\dots,s(x_k))$ to $P^{\A_i}$, where $R_i$ is $P(x_1,\dots,x_k)\; {:}{-} \;\phi$ and $s$ is a satisfying assignment of the variables of $\phi$ to elements of $\A_i$.
\end{itemize}
We say that $\Pi$ can \emph{derive the IDB $P$ on the tuple $t$} if $\A_n$ is obtained from $\A_{n-1}$ by adding $t$ to $P^{\A_{n-1}}$.  
We say that $\Pi$ can \emph{derive the goal predicate (on $\A$)} if it can derive $G$ on the empty tuple $()$, i.e., if $G^{\A_n}\not=\emptyset$. 
We say that $\csp(\B)$ is \emph{solved} by $\Pi$ if the following holds: the goal predicate can be derived by $\Pi$ on a finite $\tau$-structure $\A$ if and only if there is \emph{no} homomorphism from $\A$ to $\B$. 
A Datalog program is called 
\begin{itemize}
    \item \emph{linear} if in each rule, at most one IDB appears in the body (we then assume without loss of generality that in every rule whose body contains an IDB, the IDB is listed first).  
\item \emph{symmetric} if it is linear and for every rule $\phi_0 \; {:}{-} \; \phi_1\wedge\phi_2\wedge\dots\wedge\phi_n$ 
the Datalog program also contains the \emph{reversed rule}  $\phi_1 \; {:}{-} \; \phi_0\wedge\phi_2\wedge\dots\wedge\phi_n$.
\end{itemize}

We state some well known facts about Datalog.
\begin{theorem}[Corollary 4.5 in \cite{StarkeDiss}]\label{symDLIsClosedUnderPPCon}
    If $\B$ pp-constructs $\B'$ and $\csp(\B)$ is solved by symmetric linear Datalog, then $\csp(\B')$ is solved by symmetric linear Datalog. 
\end{theorem}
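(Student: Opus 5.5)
The plan is to go through a canonical Datalog program for $\csp(\B)$ and transfer symmetry along the pp-construction. First I reduce to pp-interpretations. A pp-construction of $\B'$ from $\B$ is a finite chain: each step is either a pp-power (pp-interpretation) or passing to a homomorphically equivalent structure. It therefore suffices to show that solvability by symmetric linear Datalog is preserved by each of the two kinds of step; the statement then follows by induction along the chain.

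\emph{Homomorphic equivalence.} Suppose $\B'$ is homomorphically equivalent to a structure $\C$ in the same signature. Then $\csp(\B')=\csp(\C)$ as sets of (plain) instances. So any program solving $\csp(\C)$ also solves $\csp(\B')$, with no change.

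\emph{pp-interpretation.} Suppose $\B'$ is pp-definable in a power $\B^d$. Each relation $R'$ of $\B'$ is then given by a pp-formula $\phi_{R'}$ over $\B$ in $d\cdot\mathrm{ar}(R')$ free variables.
\begin{itemize}
\item Given an instance $\A'$ of $\csp(\B')$, form the $\tau$-structure $\A$ as follows. Replace each element $x$ of $A'$ by $d$ fresh elements $x^1,\dots,x^d$. Replace each tuple of $R'^{\A'}$ by a copy of $\phi_{R'}$ with fresh quantified variables, and handle equalities by identification. Then $\A'\to\B'$ if and only if $\A\to\B$.
\item Let $\Pi$ be a symmetric linear program for $\csp(\B)$. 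Build $\Pi'$ whose IDBs are the IDBs of $\Pi$ ``lifted'': an IDB $P$ of arity $k$ becomes a family of IDBs indexed by which coordinates $x^j$ of which $A'$-elements they refer to.
\item A rule of $\Pi$ is simulated by rules whose body contains the $\B'$-atoms whose $\phi$-expansions supply the EDB atoms used. Existentially quantified variables of the $\phi$'s become body variables.
\end{itemize}

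The main obstacle is to ensure that (a) $\Pi'$ stays linear and, crucially, that (b) reversing a rule of $\Pi'$ is again a rule of $\Pi'$. Point (a) is easy, since each simulated rule uses the single IDB of the original rule. For point (b), a derivation of $\Pi$ on $\A$ may mention the auxiliary (quantified) elements of several $\phi$-copies. So a simulated rule must remember these as extra IDB arguments, identified by the $\A'$-tuple they came from together with their position in $\phi$. Because the number of tuples simultaneously ``open'' in one IDB fact is bounded by the IDB arity of $\Pi$, finitely many lifted IDBs suffice. The translation is defined uniformly on rules, so it maps the reversed rule of $\rho$ to the reversed version of the translation of $\rho$; hence symmetry is inherited.

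Finally, I check correctness. For soundness, every derivation of $\Pi'$ on $\A'$ projects to a derivation of $\Pi$ on $\A$. For completeness, every derivation of $\Pi$ on $\A$ lifts to one of $\Pi'$ on $\A'$. This is proved by induction on derivation length, using that the body of each $\Pi$-rule touches only boundedly many $\phi$-copies. This is a routine but bookkeeping-heavy argument, as in \cite{StarkeDiss}, Corollary 4.5. As a fallback, I would use the equivalent characterisation of symmetric linear Datalog by bounded symmetric pathwidth duality, which is visibly preserved under the above gadget replacement.
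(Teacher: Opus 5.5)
The paper gives no proof of this statement (it is quoted from \cite{StarkeDiss}), so your outline has to stand on its own. Its architecture is the standard one, and your symmetry argument is fine. The lifts of a rule $\rho$ are indexed by a pattern: how each variable is named, and which gadget supplies each EDB atom. Since $\rho$ and its reversal have the same EDB atoms, they have the same patterns, so the lifts of the reversed rule are exactly the reversals of the lifts. If the $\phi_{R'}$ contain no equalities between free variables (equalities involving quantified variables can be substituted away), every element of $\A$ has a unique name, and your lifting and projection of derivations go through.

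The gap is in the phrase ``handle equalities by identification''. The paper's pp-formulas allow atoms $x=y$, and pp-powers genuinely need them: they are the only way to force coordinate $j$ of one $A'$-element to equal coordinate $i$ of another. After identification, an element of $\A$ is a class of pairs $(x,j)$, and such a class can spread over unboundedly many gadget copies. For instance, if $\phi_{R'}$ for a binary $R'$ contains $v_1^1=v_2^1$, then an $R'$-path $x_1,\dots,x_n$ in $\A'$ collapses $x_1^1,\dots,x_n^1$ into one element $e$. A single rule application of $\Pi$ may then use $e$ in two places:
\begin{itemize}
\item in its IDB atom, where the lifted fact has recorded it (forced by an earlier EDB atom) as $(x_1,1)$;
\item in an EDB atom supplied by the gadget of $(x_{n-1},x_n)$, where it appears as $(x_n,1)$.
\end{itemize}
No rule of $\Pi'$ of bounded size can verify that these two names denote the same element. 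So your completeness induction fails: a rule touches only boundedly many $\phi$-copies, but their names for a shared element can be arbitrarily far apart.

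The missing idea is that identification is undirected reachability, which symmetric Datalog can compute. Add transport rules
\[
P'(\dots,y,\dots)\;{:}{-}\;P'(\dots,x,\dots)\wedge R'(\bar w),
\]
which rename one argument from $(x,j)$ to $(y,i)$ whenever $\bar w$ has $x,y$ at positions $p,q$ and $\phi_{R'}$ contains $v_p^j=v_q^i$. They do not change the decoded $\Pi$-fact, and the reversal of a transport rule is again a transport rule, so symmetry survives.

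Transport only acts on IDB arguments, so you also need every rule to contain a single EDB atom. Otherwise a variable occurring in two EDB atoms from different gadgets, but not in the IDB, still cannot be matched. Splitting rules of a symmetric program through intermediate IDBs is not innocuous, though: if an intermediate IDB drops a variable, the reversed step may re-choose it and create derivations the original program does not have. The clean way is as follows. Replace $\Pi$ by the canonical symmetric program of $\B$, and take for $\Pi'$ the canonical symmetric program of $\B'$ of large width. All rules of the latter are two-way valid implications, so soundness is automatic. Only the simulation then remains, using intermediate IDBs that carry all variables of the split rule, transport steps, and detours along identification chains.

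Your fallback via symmetric pathwidth duality needs the same ingredient; it is not ``visibly'' preserved.
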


\begin{theorem}[Theorem~12 in \cite{EgriLT08}]\label{thm:stconNotInSymDL}
    There is no symmetric linear Datalog program that solves $\csp(\stCon)$.
\end{theorem}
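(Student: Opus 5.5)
The plan is to argue directly about derivations: take a long unsatisfiable instance, fold it into a satisfiable one, and use the reversed rules to transport a derivation from the first to the second. Fix a symmetric linear Datalog program $\Pi$, and suppose for contradiction that it solves $\csp(\stCon)$. Let $w$ be the maximal number of variables in a rule of $\Pi$. Instances of $\csp(\stCon)$ are digraphs (edges interpreted by $O_{0,1}$, i.e.\ $x\le y$) with some vertices forced to $1$ and some to $0$. Such an instance is unsatisfiable iff some $1$-vertex has a directed path to some $0$-vertex.

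First, I set up the unsatisfiable instance. Let $\mathbb P$ be the directed path $p_0\to p_1\to\dots\to p_{M}$ with $p_0$ labelled $1$ and $p_M$ labelled $0$, for $M$ much larger than $w$ and the number of IDBs of $\Pi$. Since $\mathbb P$ has no solution, $\Pi$ derives $G$ on $\mathbb P$. Because $\Pi$ is linear, a derivation can be read as a sequence of IDB facts $P_0(t_0),P_1(t_1),\dots$. Each fact is obtained from the previous one by one rule whose body mentions at most $w$ elements. Hence the set of positions touched by consecutive steps moves by at most $w$ along the path, and the derivation is a ``walk'' of bounded-width windows on $\mathbb P$.

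Next, I build the satisfiable instance by folding. Let $\mathbb Z$ be the zigzag $z_0\to\dots\to z_N\leftarrow\dots\leftarrow z_{2N}\to\dots\to z_{3N}$ with $M=3N$, where $z_0$ is labelled $1$ and $z_{3N}$ is labelled $0$. This instance is satisfiable: put $1$ on $z_0,\dots,z_N$ and $0$ on $z_{N+1},\dots,z_{3N}$. Away from the two fold points $z_N,z_{2N}$, every radius-$w$ neighbourhood in $\mathbb Z$ is isomorphic, as a labelled structure, to a neighbourhood in $\mathbb P$. The idea is to lift the walk on $\mathbb P$ to a walk on $\mathbb Z$ that starts at the $1$-label and ends at the $0$-label. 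The resulting derivation of $G$ on the satisfiable $\mathbb Z$ is the desired contradiction.

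The key step, and the main obstacle, is lifting through the fold points; this is exactly where symmetry must be used, since for linear Datalog the statement fails. On $\mathbb Z$, when the window reaches a fold it has to continue along an edge of the opposite orientation. A forward rule application on $\mathbb P$ then corresponds on $\mathbb Z$ to a \emph{backward} traversal, i.e.\ to running the rule in reverse. The reversed rule $\phi_1 \;{:}{-}\; \phi_0\wedge\dots$ is in $\Pi$, so I would use it. I would first prove a transfer lemma. Whenever $\Pi$ derives $P(t)$ from $P'(t')$ on $\mathbb P$ within a window, it also derives $P'(t')$ from $P(t)$, so derivation steps within a window induce an equivalence relation on configurations. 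A derivation on $\mathbb P$ is then just a path in the graph of configurations (IDB facts up to position). Next I would show that the configuration graphs of $\mathbb P$ and $\mathbb Z$ are related by the folding covering map $f\colon\mathbb Z\to\mathbb P$, with $f(z_i)=p_i$, $p_{2N-i}$, or $p_{i-2N}$ on the three segments. This would make paths in the configuration graph of $\mathbb P$ lift to paths in that of $\mathbb Z$, and the reversal at the folds is absorbed by symmetry.

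The delicate point is to check that windows straddling a fold still correspond to legitimate rule applications. I would handle this by choosing $N\gg w$ and analysing separately the bounded region around each fold, and I expect to need a pumping/pigeonhole argument on IDB facts there to reroute the derivation. Finally, the lifted derivation starts at the unique $1$-label $z_0$ and ends at the $0$-label $z_{3N}$, so $\Pi$ derives $G$ on the satisfiable $\mathbb Z$. This contradicts the assumption that $\Pi$ solves $\csp(\stCon)$.
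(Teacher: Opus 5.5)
The paper does not prove this statement; it imports it as Theorem~12 of Egri--Larose--Tesson. As a standalone argument, your proposal has a genuine gap at exactly the step you defer, and the setup around that step cannot support it.

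First, reversing a rule $\phi_0 \;{:}{-}\; \phi_1\wedge\phi_2\wedge\dots\wedge\phi_n$ swaps the head and the IDB atom but keeps the EDB atoms $\phi_2,\dots,\phi_n$ unchanged, including the orientation of every edge atom. Symmetry therefore lets a derivation move backwards through its own sequence of IDB facts. It does not let it cross an edge of the opposite orientation, so ``the reversal at the folds is absorbed by symmetry'' has no content.

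Second, derivations are transported forwards along homomorphisms, but your $f$ goes from $\mathbb{Z}$ to $\mathbb{P}$. Moreover, $f$ is neither a covering nor a homomorphism of instances. Its image is $\{p_0,\dots,p_N\}$, so $p_{N+1},\dots,p_{3N}$, in particular the $0$-labelled $p_M$, have no preimage. Also $f(z_{3N})=p_N$ carries no label, and at $z_N$ the edge $p_N\to p_{N+1}$ has no lift. Every rule application on $\mathbb{P}$ that touches the path beyond $p_N$, including the one using the label at $p_M$, has nothing to lift to.

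Third, your claim that the touched positions move by at most $w$ along the path is false. The at most $w$ variables of a rule body need not be linked by EDB atoms, so one step may bring in an arbitrary vertex. An IDB fact may even carry $p_0$ and $p_M$ through the entire derivation. So there is no well-defined window straddling a fold, and the `pumping/pigeonhole' rerouting you postpone is in fact the whole proof.

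What symmetry actually provides is closure under zigzag expansions of the given derivation. Let $I_1,\dots,I_n$ be its chain of IDB facts, and take any back-and-forth sequence of positions from $1$ to $n$ with steps $\pm1$. Then $\Pi$ derives the goal on the structure obtained by gluing fresh copies of the bodies of the rules used (reversed ones when stepping back) along the shared IDB tuples. This is the mechanism behind Lemma~\ref{lem:symDLWalkCharakterization}.

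The missing idea is to choose such a traversal, depending on the derivation, whose glued structure has no directed path from a $1$-vertex to a $0$-vertex. That structure is then a satisfiable instance accepted by $\Pi$. When the bags are just $\{p_{j-1},p_j\}$ sweeping the path, go forward to position $N$, one step back, then forward to the end. This yields $p_0\to\dots\to p_N\leftarrow p'_{N-1}\to p''_N\to\dots\to p''_M$, which is satisfiable, with $1$ on $p_0,\dots,p_N$ and $0$ elsewhere. The real work, which your proposal does not supply, is showing that a suitable traversal exists for every derivation of bounded width on a long enough path, where bags are arbitrary sets of at most $w$ vertices and may keep far-away or labelled vertices in the IDB throughout.
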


\begin{theorem}[\cite{FederVardi}]\label{thm:3lin2NotInDL}
    There is no Datalog program that solves $\csp(\lin2)$.
\end{theorem}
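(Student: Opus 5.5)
The plan is to use the standard characterisation of Datalog by existential pebble games and to exhibit, for every pair $(k,l)$, an unsatisfiable instance of $\csp(\lin2)$ on which the Duplicator wins the existential $(k,l)$-pebble game. Recall that if a Datalog program $\Pi$ has IDBs of arity at most $k$ and at most $l$ variables per rule, then every derivation of the goal predicate on $\A$ gives the Spoiler a winning strategy in the existential $(k,l)$-pebble game on $(\A,\lin2)$. So if the Duplicator wins on an unsatisfiable $\A$, then $\Pi$ does not derive $G$ on $\A$, and hence $\Pi$ does not solve $\csp(\lin2)$. This follows by induction along the derivation: each derived IDB tuple is a position from which the Spoiler wins.

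The instances will be Tseitin-type systems. The relation $P_{0,1}$ is the equation $x+y+z=1 \pmod 2$, and by the remark after the definition of $\lin2$ the equation $x+y+z=0$ is pp-definable. Fix a $3$-regular graph $G_n$ on $n$ vertices with edge expansion bounded below by a constant, for $n$ large compared to $k+l$. I introduce one variable $x_e$ for each edge $e$. For each vertex $v$ I impose $\sum_{e\ni v}x_e=c_v$, where the charges satisfy $\sum_v c_v=1$. Summing all the equations gives $0=1$, since every edge is counted twice, so the system is unsatisfiable. To obtain a genuine $\tau$-structure, I replace each equation with right-hand side $0$ by its pp-definition, introducing fresh auxiliary variables. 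I must check that these local gadgets do not harm the Duplicator; it should suffice to treat each gadget as part of its vertex and enlarge the pebble count by a constant.

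The Duplicator's strategy keeps the following invariant. The pebbles lie on a set $S$ of at most $l$ edges, and their values are the restriction of an assignment that satisfies every vertex equation except at one vertex $w$. This vertex $w$ lies in the unique giant component of $G_n-S$, and that component contains no endpoint of $S$. Expansion guarantees that deleting at most $l$ edges leaves a component of size at least $n-O(l)$, with all other components of size $O(l)$; every small component must have even total charge, and the single odd charge sits in the giant one. When the Spoiler removes pebbles, the invariant persists. When the Spoiler places a pebble on a new edge $e$, there are two cases. If $e$ lies outside the giant component, its value is forced consistently. If $e$ lies inside, the Duplicator first moves the defect $w$ away from $e$ by flipping the edges along a path inside the giant component, then answers with the value given by the current assignment.

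The main obstacle is making this invariant precise: the giant component must stay well defined, and moving the defect must never change the value of an already pebbled edge. The path used to move $w$ has to avoid $S$, which is exactly where expansion is needed. I would first verify the combinatorial lemma that, in an expander, removing $O(l)$ edges leaves a component containing all but $O(l)$ vertices, and then check that the parity bookkeeping on the small components is forced and consistent. The remaining steps, namely the pebble-game characterisation and the gadget encoding, are routine.
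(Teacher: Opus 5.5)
The paper gives no proof of this statement. It is quoted from Feder and Vardi, so there is no in-paper argument to compare with.

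\textbf{The route and the broken invariant.} Your route is the standard one: the Kolaitis--Vardi existential pebble-game characterisation of Datalog, applied to Tseitin parity systems on cubic expanders, with the single parity defect kept in the giant component. It works, but the invariant as you wrote it cannot be maintained. The clause ``that component contains no endpoint of $S$'' already fails after the first move. If the Spoiler pebbles an edge $e$ whose removal does not disconnect $G_n$, then $G_n-\{e\}$ has a single component, and it contains both endpoints of $e$. Drop the clause, or, if you want it, delete the endpoints of $S$ rather than the edges and use vertex expansion.

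The clause is also unnecessary. Write $\beta$ for your global assignment. A constraint is completely pebbled only when all edges at its vertex $v$ lie in $S$. Then $v$ is isolated in $G_n-S$, so $v\neq w$, and $\beta$ satisfies that constraint.

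\textbf{Where expansion is actually used.} The path along which you move $w$ lies in the giant component of $G_n-S$, so it avoids $S$ automatically; expansion is not what ensures this. What edge expansion $h$ gives you, for $n\gg l/h$ and every $|S|=O(l)$, is a unique component of size $>n/2$. The union $U$ of all other components then satisfies $h|U|\le|\delta(U)|\le|S|$. You need this when a newly pebbled edge $e$ is a bridge of the giant component. In that case you must first move $w$ into the side that stays giant, possibly across $e$ (which is still unpebbled), and only then answer $\beta(e)$.

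When $e$ lies in a small component, its value is not forced. Still, $\beta(e)$ is a safe answer, because $\beta$ satisfies every equation there. Likewise, ``even total charge'' of a small component must mean the charge corrected by the values on pebbled boundary edges. The $\beta$-formulation handles this automatically.

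\textbf{Two smaller points.}

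First, the gadgets. Make ``treat each gadget as part of its vertex'' precise by counting a pebble on an auxiliary variable of $v$ as pebbles on all three edges at $v$. This uses at most $3l$ virtual edge pebbles, forces $v\neq w$, and freezes the edges at $v$. Hence auxiliary values read off from $\beta$ stay consistent. Alternatively, avoid gadgets entirely:
\begin{itemize}
\item subdivide one edge of the expander by a new vertex $s$ and attach a pendant vertex $t$ to $s$;
\item give every degree-$3$ vertex charge $1$;
\item encode $x_{st}=0$ by the unary relation $\{0\}$.
\end{itemize}
The number of degree-$3$ vertices is now odd, so the system is unsatisfiable and uses only $P_{0,1}$ and $\{0\}$.

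Second, the pebble-game lemma. It needs the hypothesis that $\Pi$ does not derive the goal on $\lin2$ itself, which holds because $\Pi$ solves $\csp(\lin2)$. The correct inductive statement is this: if $\Pi$ derives $P(t)$ on $\A$, then $\Pi$ derives $P(f(t))$ on $\lin2$ for every position $f$ of the Duplicator's winning strategy that is defined on $t$. It is not true that each derived tuple is a Spoiler-winning position.
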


Let $k\geq 0$ and let $\B^\ast$ be the expansion of $\B$ by all relations on $B$. The \emph{canonical symmetric linear Datalog program ($k$-program)} of $\B$ is the Datalog program whose IDBs are all at most $k$-ary relations on $B$ and that contains all linear rules $ \phi_0 \; {:}{-} \; \phi_1\wedge\phi_2\wedge\dots\wedge\phi_n$ such that 
\begin{itemize}
    \item $\phi_1\wedge\phi_2\wedge\dots\wedge\phi_n$ 
    has at most $k$ variables,
    \item the formula $(\phi_1\wedge\phi_2\wedge\dots\wedge\phi_n)\Rightarrow \phi_0$ is valid in $\B^\ast$, and
    \item if $\phi_1$ is an IDB, then $\phi_0\wedge\phi_2\wedge\dots\wedge\phi_n$ has at most $k$ variables and the formula $(\phi_0\wedge\phi_2\wedge\dots\wedge\phi_n)\Rightarrow \phi_1$ is valid in $\B^\ast$.
\end{itemize}
The 0-ary relation $\emptyset$ is the goal predicate of the $k$-program. Note that if $R$ is a relation of $\B$, then $R$ is an EDB and an IDB, we will call $R(x)$ either an (EDB) conjunct or an (IDB) conjunct if we need to distinguish the two cases.
We say that an instance $(\A,L)$ \emph{passes the $k$-test} if the $k$-program can not derive the goal predicate on the structure obtained from  $\A$ by adding $x$ to the unary relation $(L(x))^{\A}$ for $x\in A$. The list $L^{\A}_k\colon A\to 2^B$ maps $x\in A$ to the intersection of all unary IDBs that the $k$-program can derive on $x$. In particular, $L^{\A}_k(x)\subseteq L(x)$ for all $x\in A$. If $\A$ is clear from the context we write $L_k$ instead of $L_k^{\A}$.

\subsection{Walks}
Let $\A$ be a structure. A \emph{walk} is a sequence $\omega=y_1,x_2,y_2,\dots,y_{m-1},x_m,y_m$ of tuples of elements of $A$ such that $x_i$ contains the elements of $y_{i-1}$ and $y_i$ for all $i\in[2,m]$. The width of $\omega$ is $\max\{\m x_i\m\mid i\in[2,m]\}$. 
The walk $\omega$ is \emph{simple} if $\m y_1\m=\dots=\m y_m\m=1$ and $\m y_1\m=\dots=\m y_m\m\leq\maxArity$. 
From $\omega$ we define $\struc^{\A}(\omega)$ as the structure obtained from the disjoint union of $\A\m_{x_2},\dots,$ and $\A\m_{x_m}$ by identifying $y_i$ in $\A\m_{x_i}$ with $y_i$ in $\A\m_{x_{i+1}}$ for all $i\in[2,m-1]$. Let $h_i$ be the map that sends the elements of $y_i$ to their corresponding elements in $\struc^{\A}(\omega)$ and let $h$ be the map from $\struc^{\A}(\omega)$ to $\A$ that send each element to the unique element it corresponds to. Note that $h$ is not necessarily injective. A \emph{realization} of $\omega$ that respects $L$ is a homomorphism $f$ from $\struc^{\A}(\omega)$ to $\B$ with $f(x)\in L(h(x))$. The relation $\real{\omega}{(\A,L)}$ consists of all tuples $f(h_1(y_1),\dots,h_m(y_m))$, where $f$ is realization of $\omega$ respecting $L$. It is easy to see that $\real{\omega}{(\A,L)}$ and $\extremes(\real{\omega}{(\A,L)})$ are pp-definable in $\B$.

A \emph{zigzag expansion} of $\omega$ is a walk $\omega'=y_{i_1},x_{i_2},y_{i_2},\dots,y_{i_{n-1}},x_{i_n},y_{i_n}$ such that
\begin{enumerate}
    \item $i_1,\dots,i_n\in[m]$
    \item $i_1=1$ and $i_n=i_m$, and
    \item $i_{j+1}\in\{i_j-1,i_j+1\}$ for all $j\in[n-1]$.
\end{enumerate}
Define $\zzreal{\omega}{(\A,L)}$ to be the union of all $\real{\omega'}{(\A,L)}$, where $\omega'$ is a zigzag expansion of $\omega$. Note that $\zzreal{\omega}{(\A,L)}$ is not necessary pp-definable in $\B$.
Walks have the following connection to Datalog.
\begin{lemma}[Lemma 3 in \cite{DalmauLICS15}]
    Let $(\A,L)$ be an instance and let $\omega$ be a walk from $x$ to $y$ in $A$ of width at most $k$. Then the $k$-test derives $\last(\zzreal{\omega}{(\A,L)})$ on the tuple $y$.
\end{lemma}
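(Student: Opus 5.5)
Since $\zzreal{\omega}{(\A,L)}$ is the union of the relations $\real{\omega'}{(\A,L)}$ over all zigzag expansions $\omega'$ of $\omega$, I will fix a single zigzag expansion $\omega'=y_{i_1},x_{i_2},y_{i_2},\dots,x_{i_n},y_{i_n}$. The plan is to show, by induction on $j\in[n]$, that the $k$-program derives on the tuple $y_{i_j}$ an IDB containing $\pi_{\text{last}}(\real{\omega'_j}{(\A,L)})$, where $\omega'_j$ is the prefix of $\omega'$ ending at $y_{i_j}$. IDBs derived on the same tuple can be intersected, and their union can be taken by a final step, so it suffices to treat each $\omega'$ separately. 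To see this, note that for each $\omega'$ the program derives $S_{\omega'}\supseteq\last(\real{\omega'}{(\A,L)})$ on $y$. Because the derivation is monotone, I will in fact prove the stronger statement that the program derives exactly the relation $T_j\coloneqq\last(\real{\omega'_j}{(\A,L)})$ on $y_{i_j}$. Taking unions then needs care: I will instead show directly that the program derives the union $U_j$ of the $T$'s over all zigzag prefixes ending at the same index.

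The base case $j=1$ uses the unary list constraints on $y_1$. Let the program derive the IDB $L(y_1)$ restricted appropriately; this is a rule with at most $|y_1|\le k$ variables and an EDB body consisting of the unary relations $L(\cdot)$. For the step from $j$ to $j+1$, the tuple $x_{i_{j+1}}$ or $x_{i_j}$ contains both $y_{i_j}$ and $y_{i_{j+1}}$ and has at most $k$ elements. I will use the linear rule $T_{j+1}(y_{i_{j+1}})\;{:}{-}\;T_j(y_{i_j})\wedge\phi$, where $\phi$ is the conjunction of all EDB atoms of $\A\m_{x}$ together with the list atoms. Its body has at most $k$ variables, and the implication is valid in $\B^\ast$ by the definition of $T_{j+1}$ as the projection of realizations of the extended walk. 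Note that realizations of $\struc(\omega'_{j+1})$ glue along $y_{i_j}$ only, so the new part imposes exactly $\phi$.

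The key point, and the main obstacle, is the symmetry condition: the reversed rule $T_j(y_{i_j})\;{:}{-}\;T_{j+1}(y_{i_{j+1}})\wedge\phi$ must also be valid, but $T_j$ is generally larger than what can be recovered backwards. I resolve this by working with the zigzag-closed sets instead of the single-prefix sets $T_j$. For an index $i$, let $Z_i$ be the union of $\last(\real{\omega''}{})$ over all zigzag walks $\omega''$ starting at $1$ and ending at index $i$. These sets satisfy $Z_{i\pm1}=\{b:\exists a\in Z_i,\ (a,b)\text{ realizable through }x\}$ restricted suitably. More precisely, stepping forward and then back stays within $Z$ because a back-step is itself a zigzag extension. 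Hence both the rule $Z_{i+1}(y_{i+1})\;{:}{-}\;Z_i(y_i)\wedge\phi$ and its reverse are valid in $\B^\ast$, so both belong to the canonical symmetric program. Along $\omega$ itself, which is a zigzag expansion of itself, the program therefore derives $Z_m=\last(\zzreal{\omega}{(\A,L)})$ on $y$. The final verification is a short check of the identity $Z_{i+1}=\text{image of }Z_i$ and $Z_i\supseteq\text{preimage of }Z_{i+1}$.
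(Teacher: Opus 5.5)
Your final argument is correct. It is the natural proof of this lemma, which the paper does not prove itself but cites as Lemma~3 of \cite{DalmauLICS15}. Let $Z_i$ be the union of the last projections over all zigzag walks from index $1$ to index $i$. Appending one forward or one backward step through $x_{i+1}$ keeps you inside these sets, so $Z_{i+1}(y_{i+1})\;{:}{-}\;Z_i(y_i)\wedge\phi_{i+1}$ and its reverse are both rules of the $k$-program. Starting from an EDB-only rule for $Z_1$ (which needs no reverse) and running the forward rules along $\omega$ itself derives $Z_m=\last(\zzreal{\omega}{(\A,L)})$ on $y$.

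Drop two side claims; your final argument uses neither. First, IDBs derived on a common tuple cannot simply be united: a rule $S'(y)\;{:}{-}\;S(y)$ with $S\subsetneq S'$ has an invalid reverse. Second, the stated identity $Z_{i+1}=\text{image of }Z_i$ is false in general. A zigzag walk may reach index $i+1$ from $i+2$ with a value on $y_{i+1}$ that extends to no list-respecting homomorphism $\A|_{x_{i+1}}\to\B$. Only the inclusion of that image in $Z_{i+1}$ holds, and that is exactly what the forward rule needs.
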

\begin{lemma}[Lemma 2 in \cite{DalmauLICS15}]\label{lem:symDLWalkCharakterization}
    An instance $(\A,L)$ of $\csp(\B)$ passes the $k$-test if and only if $\zzreal{\omega}{(\A,L)}\neq\emptyset$ for all walks $\omega$ in $\A$ of width at most $k$.
\end{lemma}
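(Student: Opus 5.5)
Both directions reduce to comparing derivations of the canonical $k$-program with realizations of zigzag expansions. I will assume the preceding lemma (Lemma 3 of \cite{DalmauLICS15}): along any walk $\omega$ of width at most $k$ from $x$ to $y$, the $k$-program derives the IDB $\last(\zzreal{\omega}{(\A,L)})$ on $y$.

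\emph{If some walk has empty zigzag realization, the goal is derived.} Suppose $\zzreal{\omega}{(\A,L)}=\emptyset$ for a walk $\omega$ of width at most $k$ ending in $y_m$. By the preceding lemma the program derives $\last(\zzreal{\omega}{(\A,L)})=\emptyset$ on $y_m$. From the IDB $\emptyset(y_m)$ the rule ``goal ${:}{-}\;\emptyset(y_m)$'' derives the goal. This rule lies in the canonical program: the body has at most $k$ variables, the implication is trivially valid, and the reversed rule is also valid because the empty relation contains nothing. Hence the instance fails the $k$-test.

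\emph{If the goal is derived, some walk has empty zigzag realization.} Take a derivation of the goal. Since the program is linear, the derivation is a chain of rules $r_1,\dots,r_n$, each using the IDB produced by the previous one. I will build the walk $\omega$ with:
\begin{itemize}
\item the $y_i$ being the tuples on which the IDBs are derived (the first rule has no IDB in its body, so I start from its EDB part);
\item the $x_i$ being the sets of variables used in the body of rule $r_i$, which have size at most $k$.
\end{itemize}
The key invariant, proved by induction, is that the IDB derived on $y_i$ contains $\last(\zzreal{\omega_{\le i}}{(\A,L)})$. The step uses validity of $(\phi_1\wedge\dots\wedge\phi_n)\Rightarrow\phi_0$ in $\B^\ast$: any realization extending a tuple in the IDB on $y_{i-1}$ and satisfying the EDBs on $x_i$ lands in the head relation. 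The final IDB is $\emptyset$, so $\zzreal{\omega}{(\A,L)}=\emptyset$.

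The main obstacle is that derivations may move backwards using reversed rules, and the invariant must hold for zigzag expansions, not only for forward realizations. Including backward steps in the walk itself is exactly what the zigzag definition allows: a backward step corresponds to a walk that revisits an earlier position. The symmetry condition in the canonical program, that $(\phi_0\wedge\phi_2\wedge\dots)\Rightarrow\phi_1$ is also valid, is what keeps the invariant true for these backward moves. One also has to handle list constraints, which are unary EDBs $L(x)$ added to $\A$, so that realizations respect $L$. I expect to follow \cite{DalmauLICS15} closely here, since this statement is their Lemma 2 and the proof does not depend on arity beyond the width bound $k$.
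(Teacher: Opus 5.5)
The paper does not prove this lemma; it is quoted from \cite{DalmauLICS15}. Your outline is the standard argument and is sound. One direction uses the preceding lemma (Lemma 3) followed by the rule $G \; {:}{-} \; \emptyset(y_m)$. The other reads a walk off the chain of rule applications in a linear derivation of the goal.

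One step in the second direction needs to be organized differently. The invariant ``the IDB $P_i$ derived on $y_i$ contains $\last(\zzreal{\omega_{\le i}}{(\A,L)})$'' is true, but it cannot be established by induction on $i$. A zigzag expansion of $\omega_{\le i}$ may reach $y_{i-1}$ only after it has already visited $y_i$, so its value at $y_{i-1}$ is not controlled by the hypothesis on $\last(\zzreal{\omega_{\le i-1}}{(\A,L)})$.

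Prove instead the following: for every zigzag expansion $\omega'$ of the whole walk and every realization $f$ of $\omega'$, each visited position $p$ other than the start receives a value in $P_p$. The start carries no constraint. Induct along $\omega'$:
\begin{itemize}
\item A forward step through $x_p$ uses the validity of the rule applied at step $p$.
\item A backward step through the same $x_p$ uses the reversed validity.
\item The base rule has no reverse condition, but none is needed, since a backward step through it only returns to the unconstrained start.
\end{itemize}
Every zigzag expansion ends at the goal position and enters it by a forward step. There the head is the false $0$-ary relation, so no zigzag expansion has a realization.

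This is also the only place where symmetry enters. You write that backward steps in the walk model reversed rules of the derivation, but that conflates two things. Reversed rules that occur in the derivation need no special treatment: they are ordinary steps of the constructed walk, and walks may revisit elements of $A$. The symmetry condition is needed because zigzag expansions of that walk traverse its steps backwards.

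Finally, the walk condition that $x_i$ contains $y_{i-1}$ and $y_i$, with $|x_i|\le k$, relies on head variables occurring in the body. This holds because the assignment $s$ in a derivation step must be defined on them.
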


\subsection{Good Pairs}
Let $a,b\in B$ distinct and let $L\colon A\to2^B$. Then $(a,b)$ is a \emph{good pair relative to $L$} if there is an $x \in A$ with $a,b\in L(x)$ and for all $y\in $, all  $c,d\in L(y)$, and for all pp-definable binary relations $R\subseteq B^2$ we have $\{(a,c),(b,d),(b,c)\}\in R$ implies $(a,d)\in R$. 
\begin{lemma}[Lemma 5 in \cite{DalmauLICS15}]\label{lem:goodPairExists}
Let $\B$ be a finite conservative structure that can not pp-define $O_{a,b}$ on any elements $a,b\in B$. For each list $L\colon A\to 2^B$ with $\m L(x)\m\geq 2$ for some $x\in A$ there exists $x\in A$ and $a,b\in L(x)$ such that $(a,b)$ is a good pair relative to $L$.
\end{lemma}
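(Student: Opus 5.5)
The plan is to argue by contradiction. The key idea is that a failure of goodness can be chained along a sequence of pairs, and by finiteness of $B$ any such chain eventually closes up into a pp-definition of some $O_{a,b}$.

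\emph{Blocking.} Call an ordered pair $(a,b)$ of distinct elements a \emph{candidate} if $a,b\in L(x)$ for some $x\in A$. By the hypothesis $\m L(x)\m\geq 2$, candidates exist. Say that a candidate $(a,b)$ is \emph{blocked} by an ordered pair $(c,d)$ if there are $y\in A$ with $c,d\in L(y)$ and a pp-definable $R\subseteq B^2$ with $(a,c),(b,d),(b,c)\in R$ and $(a,d)\notin R$. Then $(a,b)$ is a good pair relative to $L$ exactly when it is a candidate that is blocked by nothing. Note that if $(a,b)$ is blocked by $(c,d)$, then $c\neq d$, since otherwise $(a,d)=(a,c)\in R$. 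Hence $(c,d)$ is itself a candidate.

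\emph{Normalising the blocking relation.} Suppose $(a,b)$ is blocked by $(c,d)$ via $R$. Because $\B$ is conservative, the unary relations $\{a,b\}$ and $\{c,d\}$ are available, so
\[
R'\coloneqq R\cap(\{a,b\}\times\{c,d\})
\]
is pp-definable. Since $a\neq b$, $c\neq d$ and $(a,d)\notin R$, the relation $R'$ is exactly $\{(a,c),(b,c),(b,d)\}$. So we may always assume that the blocking relation has precisely this shape.

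\emph{Transitivity.} Suppose $(a,b)$ is blocked by $(c,d)$ via $R'=\{(a,c),(b,c),(b,d)\}$, and $(c,d)$ is blocked by $(e,f)$ via $S'=\{(c,e),(d,e),(d,f)\}$. The composition $R'\circ S'$ is pp-definable, and a direct computation gives
\[
R'\circ S'=\{(a,e),(b,e),(b,f)\}.
\]
Indeed, $a$ reaches only $c$, and $c$ reaches only $e$. In particular $(a,f)\notin R'\circ S'$, so $(a,b)$ is blocked by $(e,f)$. Thus blocking is transitive on candidates.

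\emph{Conclusion.} Assume no good pair exists. Then every candidate is blocked by some other candidate. Starting from any candidate and repeatedly choosing a blocking pair gives an infinite sequence of candidates. There are only finitely many ordered pairs in $B^2$, so some candidate $(a,b)$ occurs twice in this sequence. By transitivity, $(a,b)$ is then blocked by $(a,b)$ itself. The normalised witness is the pp-definable relation $\{(a,a),(b,a),(b,b)\}$, which is $O_{b,a}$. This contradicts the hypothesis that $\B$ pp-defines no $O_{a,b}$, so a good pair relative to $L$ exists.

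The main point needing care is the normalisation step. Without intersecting with $\{a,b\}\times\{c,d\}$, the composed relations could pick up extra tuples and transitivity would fail; conservativity is exactly what makes this intersection pp-definable. One must also keep track of the order of the pairs, so that the cycle yields $O_{b,a}$ rather than a relation that does not have the $O$ shape.
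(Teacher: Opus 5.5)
Your proof is correct. The normalisation to $\{(a,c),(b,c),(b,d)\}$ via conservativity, the composition computation (which uses only $a\neq b$, $c\neq d$, $e\neq f$ and so also covers overlaps such as $(c,d)=(b,a)$), and the extraction of $O_{b,a}$ from a cycle all check out. The paper gives no proof of this lemma and simply imports it from Dalmau et al.; your argument is a sound self-contained proof, in the spirit of the acyclicity (``no circular $N$'') argument behind that citation.
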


\section{Main Result}
We are now ready to state the main result.

\begin{theorem}\label{thm:main}
    Let $\B$ be a finite conservative structure, then the following are equivalent
    \begin{enumerate}
        \item $\csp(\B)$ is solved by a symmetric linear Datalog program,
        \item the polymorphisms of $\B$ contain a Hagemann-Mitschke chain and a 3-4WNU,
        \item $\B$ can not pp-construct $\stCon$ or $\lin 2$,
        \item for no $a,b \in B$ distinct can $\B$ pp-define $O_{a,b}$ or $P_{a,b}$.
    \end{enumerate}
    If one of the items is true, then $\csp(\B)$ is in L, otherwise $\csp(\B)$ is NL-hard or Mod$_2$L-hard.
\end{theorem}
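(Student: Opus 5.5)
I will prove the equivalences by the cycle $1\Rightarrow 3\Rightarrow 4\Rightarrow 1$, and separately get $2\Leftrightarrow 3$ and the complexity consequences.

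\textbf{The easy implications.}
For $1\Rightarrow 3$: if $\B$ pp-constructed $\stCon$ or $\lin2$, then by Theorem~\ref{symDLIsClosedUnderPPCon} the CSP of that structure would be solved by symmetric linear Datalog. This contradicts Theorem~\ref{thm:stconNotInSymDL} or Theorem~\ref{thm:3lin2NotInDL} respectively.

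For $3\Rightarrow 4$: if $\B$ pp-defines $O_{a,b}$ or $P_{a,b}$, then together with the unary relations $\{a\},\{b\}$ (present by conservativity) it pp-defines a copy of $\stCon$ or $\lin2$ on $\{a,b\}$. This is a pp-construction.

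For $2\Leftrightarrow 3$: this is Theorems~\ref{thm:stConIffHM} and~\ref{thm:Lin2Iff34WNU}, the latter using conservativity.

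\textbf{Complexity consequences.}
If the items hold, symmetric linear Datalog is in L by Reingold's theorem. Otherwise $\B$ pp-constructs $\stCon$ or $\lin2$, so NL-hardness or Mod$_2$L-hardness transfers along pp-constructions, which are logspace reductions.

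\textbf{The main implication $4\Rightarrow 1$.}
I will show that for $k$ large enough (depending on $|B|$ and $\maxArity$), every instance $(\A,L)$ that passes the $k$-test is solvable; the $k$-program then solves $\csp(\B)$. I will induct on $\sum_x|L(x)|$, following \cite{DalmauLICS15}.

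If all lists have size at most one, passing the test already yields a solution, since every constraint is a walk of width at most $\maxArity$. Otherwise Lemma~\ref{lem:goodPairExists} gives a good pair $(a,b)$ at some $x$. Define a reduced list $L'$: remove $b$ from $L(x)$, and for every $y$ remove those elements $d$ that are reachable from $b$ but not from $a$. Reachability here is via zigzag realizations of walks from $x$ of bounded width.

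I then need that $(\A,L')$ still passes the $k'$-test for a suitable $k'<k$, and apply induction with a decreasing sequence of widths $k_n$. The argument runs as follows.
\begin{itemize}
\item Suppose some walk $\omega$ of width $\le k'$ has empty zigzag realization under $L'$.
\item Prefix $\omega$ with a walk witnessing the removal.
\item Use the good-pair property to obtain a walk of width $\le k$ with empty $\zzreal{\cdot}{(\A,L)}$, contradicting Lemma~\ref{lem:symDLWalkCharakterization}.
\end{itemize}
The good-pair property is applied to binary pp-definable relations $\extremes(\real{\omega}{})$.

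\textbf{Main obstacle.}
The hard step is making this work with relations of arity above two. Walks now pass through tuples $y_i$ of size up to $\maxArity$, but the removal and good-pair arguments are binary, concerning single elements.

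My plan is to show that every walk of width $k$ can be replaced by a simple walk whose intermediate $y_i$ are single elements, of width bounded in terms of $k$ and $\maxArity$, without losing the realization relation on the endpoints. To do this, I will use $\B=\expansion(\B)$ to project constraints, and prove that the derived lists $L_{\maxArity}$ already enforce local consistency of each constraint tuple. This lets one break a tuple step into single-element steps.

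I would first prove this reduction lemma, that zigzag realizations of arbitrary walks are captured by simple walks of width $f(k,\maxArity)$. The binary proof should then go through with $k_n$ chosen to shrink along the induction.
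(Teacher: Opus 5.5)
There is a genuine gap in $(4)\Rightarrow(1)$. Your easy implications and the complexity consequences are fine.

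\textbf{The $P_{a,b}$ hypothesis is never used.} Nothing in your plan for $(4)\Rightarrow(1)$ uses that $\B$ cannot pp-define $P_{a,b}$, and that hypothesis is indispensable. The conservative expansion of $\lin{2}$ pp-defines no $O_{a,b}$, since the minority operation violates both $O_{0,1}$ and $O_{1,0}$. So Lemma~\ref{lem:goodPairExists} still hands you good pairs there, yet by Theorem~\ref{thm:3lin2NotInDL} no $k$-test solves it.

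Concretely, your reduction lemma is false in $\lin{2}$. Take the constraints $x+u_0+v_0=1$, $u_{i+1}+u_i+w_i=1$, $v_{i+1}+v_i+w_i=1$ for $i<m$, and $y+u_m+v_m=1$. The walk with interfaces $(u_i,v_i)$ and bags of size at most $5$ has endpoint relation $\{(0,0),(1,1)\}$. But every element other than $x,y$ has full projection with $x$ in the solution set. Any walk from $x$ to $y$ with single-element interfaces therefore has one of two forms. Either it uses a single bag containing all $3m+3$ elements, or its endpoint relation is full, because it is the composition of the relations of the two halves at an interior interface, and each half contains the full global projection. Arc-consistent lists such as $L_{\maxArity}$ cannot let you cut a two-element interface into single-element steps.

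The paper never attempts such a reduction. The test keeps arbitrary walks, and simple walks are used only to define separating walks and the region $Z$. Higher arity is handled only where a constraint tuple straddles $Z$ and $A\setminus Z$. There Lemmas~\ref{lem:replacingBinTuples} and~\ref{lem:getBinaryWitness} do the work, and they are exactly where the absence of $P_{a,b}$ enters. They turn a failing glued tuple $(s_I,t_O)\notin\pi_{I,O}(R)$ into a binary witness $(s_i,t_o)$ with no common $r_o$. That witness contradicts $y_o\notin Z$ via Lemma~\ref{lem:avoidingWalks}.

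\textbf{The induction measure cannot give a fixed program.} Each of your steps costs width ($k'<k$). But $\sum_x|L(x)|$ can be as large as $|A|\cdot|B|$, so the number of steps, and hence the required $k$, depends on the instance. You need a measure bounded in terms of $B$ alone, such as the paper's $\wt(L)\le 2^{|B|}-|B|-1$. You also need a step that strictly lowers it, which means many lists must shrink at once rather than removing $b$ at a single $x$.

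The paper achieves this as follows. It takes a minimal counterexample (Lemma~\ref{lem:minimalCounterExample}) and repairs a given solution $f$ on all of $Z$ simultaneously. On $Z$, every list of $L^a_{\maxArity}$ is strictly contained in the corresponding list of $L_{r_n}$. Hence an inclusion-maximal list of size at least two disappears, and $\Phi_{n-1}$ applies to $\trace_Z(\A,L^a_{\maxArity})$.
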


The proof will be an induction. We first need some preparation. 
\begin{lemma}\label{lem:inductionnumbersExist}
There exists an increasing sequence $\maxArity=k_0\leq s_1\leq r_1\leq k_1\leq s_2\leq r_2\leq k_2\leq \dots$  such that for all $n\geq 1$ and all instances $(\A,L)$ of $\csp(\B)$
\begin{enumerate}
\item $(\A,L)$ passes the $s_n$-test implies $(\expansion(\A),L_{\maxArity})$ passes the $k_{n-1}$-test,
\item the $r_n$-test does not derive $B\setminus U$ on x implies $(\A,L')$ passes the $s_n$-test, where $L'$ is obtained from $L$ by changing the value at $x$ to $L(x)\cap U$, and
\item $(\A,L)$ passes the $k_n$-test implies  $(\A,L_{r_n})$ passes the $k_{n-1}$-test.
\end{enumerate}
\end{lemma}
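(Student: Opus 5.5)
The sequence will be built greedily. Set $k_0=\maxArity$, and for each $n\geq 1$ choose $s_n$, then $r_n$, then $k_n$ in turn, each large enough to satisfy the corresponding item. Every item has the form ``if the $K$-program cannot do something, then a modified instance passes the $k$-test'' for a previously fixed $k$. So it suffices to show the following uniform statement: for every fixed $k$ there is $K\geq k$, depending only on $k$ and $\B$, for which the implication holds. Monotonicity is automatic, since a larger test is stronger: the $K'$-program contains the $K$-program when $K\leq K'$. We may therefore always take the maximum with the previous entry.

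The main tool is Lemma~\ref{lem:symDLWalkCharakterization}, which rephrases passing a test in terms of walks. Suppose the modified instance fails the $k$-test. Then there is a walk $\omega$ of width at most $k$ in the modified instance with $\zzreal{\omega}{}=\emptyset$. The plan is to turn $\omega$ into a walk $\omega^\ast$ of bounded width $K$ in the original instance with empty zigzag realization, or one whose zigzag realization forces the forbidden unary IDB. The $K$-test then fails, or derives the relevant fact, giving the contrapositive.

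\textbf{Item 1.} A tuple in a relation of $\expansion(\A)$ is a projection $\pi_J$ of some tuple $t$ in an original relation of $\A$. Each bag $x_i$ of $\omega$ is therefore replaced by the union of the witnessing tuples $t$, which enlarges the width by a factor of at most $\maxArity$. The list $L_{\maxArity}(z)$ is derived by the $\maxArity$-program. By the walk-to-Datalog lemma (Lemma 3 of \cite{DalmauLICS15}), $L_{\maxArity}(z)$ is the intersection of $\last(\zzreal{\omega_z}{})$ over walks $\omega_z$ of width $\maxArity$ ending at $z$. Finitely many such walks suffice, at most $|B|$ of them, one per removed value. We splice these walks in at the elements of each bag, and a symmetric program can traverse them forward and back. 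The width bound is then roughly $s_n=\maxArity\cdot k_{n-1}+\maxArity$.

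\textbf{Item 2.} Restricting $L(x)$ to $L(x)\cap U$ is simulated by adding a walk ending at $x$ that realizes only values outside $B\setminus U$. If the $r_n$-test does not derive $B\setminus U$ on $x$, then for every value outside $U$ there is no short derivation of its exclusion. We argue contrapositively: a failing walk for $(\A,L')$ of width $s_n$ that visits $x$ can be converted into a walk of width $r_n=s_n+1$ that derives $B\setminus U$ on $x$. Concretely, we cut $\omega$ at each occurrence of $x$ and use the reversed rules of the symmetric program to bring the contradiction back to $x$. Zigzag expansions are exactly what make this back-tracking legal.

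\textbf{Item 3.} This is the iterate of Item 2 combined with the same splicing as Item 1. The list $L_{r_n}$ is derived by walks of width $r_n$. We replace each list constraint $L_{r_n}(z)$ occurring in a failing $k_{n-1}$-walk by the union of the finitely many width-$r_n$ walks that justify it. This gives a failing walk of width about $k_{n-1}+r_n$.

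The main obstacle is the splicing step itself. The realization of the composed walk is not simply the composition of the pieces: zigzag expansions of the spliced walk may revisit the inserted subwalks in ways that permit realizations the original list restriction excluded. The key check is that $\zzreal{\cdot}{}$ of the spliced walk is contained in the zigzag realization of $\omega$ with the restricted lists, and I would verify this first. If that inclusion fails, I would instead argue at the level of derivations. This means adding to the $K$-program rules that emulate, within $K$ variables, the intersection of the unary IDBs, using the bound $|B|$ on the number of relevant derivations.
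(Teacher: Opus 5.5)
Your reduction to ``for each $k$ there is some $K\ge k$'' and the greedy choice of $s_n,r_n,k_n$ are fine. But the entire content of the lemma lies in the three simulation statements, and for these you give only a plan whose decisive step you explicitly leave unverified. That step is the claim that zigzag realizations of the spliced walk land inside those of the failing walk with the restricted list, and it is not a formality. A walk is a linear sequence, so to insert a witness walk $\omega_z$ at a bag $x_i$ you would have to carry all of $x_i$ through $\omega_z^{-1}\omega_z$. Then every interface contains $x_i$ and the values on $x_i$ cannot drift. You would then still have to argue, e.g.\ from the last visit of a zigzag to the turning point, that every crossing of the detour forces the value at $z$ into $\last(\zzreal{\omega_z}{(\A,L)})$. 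None of this is in the proposal.

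Moreover, the fact you attribute to Lemma~3 of \cite{DalmauLICS15} runs the wrong way. That lemma only says each $\last(\zzreal{\omega}{(\A,L)})$ is derived, i.e.\ $L_{\maxArity}(z)\subseteq\bigcap_\omega\last(\zzreal{\omega}{(\A,L)})$. Splicing needs the converse: every derived unary IDB contains $\last(\zzreal{\omega}{(\A,L)})$ for some walk of width at most $\maxArity$ ending at $z$. That converse is not available in the paper. Your factor-$\maxArity$ width bound for the union of witnesses is also wrong, since one bag can contain many projected tuples, though some bound in terms of $k$ and $\tau$ would suffice.

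For item~2 the walk route cannot work as stated. Walks only yield IDBs of the form $\last(\zzreal{\omega}{(\A,L)})\subseteq L(x)$, so at best you get some $S\subseteq L(x)\setminus U$ derived on $x$, not $B\setminus U$ itself. The canonical symmetric program has no weakening rule $P(y)\;{:}{-}\;S(y)$ for $S\subsetneq P$, because its reversal is invalid. ``Cutting $\omega$ at each occurrence of $x$'' also fails. Distinct occurrences of $x$ are independent copies in $\struc^{\A}(\omega)$, and the refutation only uses that all of them lie in $U$; you must tie them to one value by carrying it through every bag.

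The missing idea is to simulate derivations directly, carrying the current IDB tuple inside a product or implication IDB. The paper does this in three places:
\begin{itemize}
\item Lemma~\ref{lem:DatalogSimulateSubProgram} (cost $k+r$) replaces $Q(z)$ and a list conjunct by $(Q\times L(s(y)))(z,y)$ and reruns the $r$-program derivations with $Q$ attached.
\item Lemma~\ref{lem:DatalogSimulateExpansion} (cost $k+\maxArity$) uses $(Q\times\pi_J(R))(z,y)\;{:}{-}\;Q(z)\wedge R(y')$ for the expansion.
\item Lemma~\ref{lem:DatalogSimulateUnary} (cost $k+1$) replaces each $P(y)$ by $(U\to P)(z,y)$, where $U\to P=\{(c,t)\mid c\in U\Rightarrow t\in P\}$ and $z$ is identified with $x$; the derivation ends in $U\to G=B\setminus U$.
\end{itemize}
In each case the reversal of every new rule is again valid, which keeps the simulation inside the canonical symmetric program. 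Your fallback gestures at this but does not carry it out.
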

The Lemmata needed for the proof will be provided in Section~\ref{sec:DatalogSimulations}
\begin{proof}
We define the sequence recursively. Let $n\geq 1$. Choose $s_n\coloneqq k_{n-1}+2\maxArity$, $r_n\coloneqq s_n+1$, and $k_n\coloneqq r_n+k_{n-1}$.
To verify (1) assume that $(\A,L)$ passes the $s_n$-test. 
Since $s_n=k_{n-1}+2\maxArity-2$, Lemma~\ref{lem:DatalogSimulateSubProgram} implies $(\A,L_{\maxArity})$ passes the $(k_{n-1}+\maxArity)$-test. By  Lemma~\ref{lem:DatalogSimulateExpansion}, $(\expansion(A),L_{\maxArity})$ passes the $k_{n-1}$-test.
Item (2) is the contraposition of Lemma~\ref{lem:DatalogSimulateUnary} and (3) is Lemma~\ref{lem:DatalogSimulateSubProgram}.
\end{proof}

\begin{definition}
For a list $L\colon A\to 2^B$ define the \emph{weight} of $L$ to be 
\[\wt(L)\coloneqq \m\{U\mid x\in A, U\subseteq L(x), \m U\m\geq 2\}\m.\]
For $n\geq 0$ define 
\begin{align*}
    \begin{gathered}
    \text{ every instance $(\A,L)$ of $\csp(\B)$ with $\wt(L)\leq n$}\\
    \text{that passes the $k_n$-test has a solution.}
    \end{gathered}\tag{$\Phi_n$}
\end{align*}
\end{definition}


\begin{lemma}\label{lem:Phi0Holds}
    The statement $\Phi_0$ holds.
\end{lemma}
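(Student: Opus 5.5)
If $\wt(L)=0$, then no set $U\subseteq L(x)$ with $|U|\geq 2$ exists, so $|L(x)|\leq 1$ for every $x\in A$. The plan is to show that the only obstruction to a solution is then already detected by the $k_0=\maxArity$-test: an empty list, or a single relational tuple of $\A$ that the forced assignment violates.

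First, if some $L(x)=\emptyset$, the $k_0$-program derives the goal. The instance is fed to the program with $x$ placed in the unary relation $(L(x))^{\A}=\emptyset^{\A}$. The rule $\emptyset\;{:}{-}\;\emptyset(x)$ has one variable and is trivially valid, so it belongs to the $k_0$-program, and it derives the goal. Hence, if $(\A,L)$ passes the $k_0$-test, every list is a singleton $L(x)=\{f(x)\}$, and $f$ is the only candidate solution.

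It remains to check that $f$ is a homomorphism. Take a relation symbol $R\in\tau$ of arity $m\leq\maxArity$ and a tuple $(x_1,\dots,x_m)\in R^{\A}$, and suppose $(f(x_1),\dots,f(x_m))\notin R^{\B}$. Consider the rule
\[\emptyset\;{:}{-}\;R(x_1,\dots,x_m)\wedge R_{\{f(x_1)\}}(x_1)\wedge\dots\wedge R_{\{f(x_m)\}}(x_m).\]
Here $R_{\{f(x_i)\}}$ is the unary symbol given by conservativity, and the list relation $(L(x_i))^{\A}$ is exactly this symbol. The body has at most $m\leq\maxArity=k_0$ variables. The body is unsatisfiable in $\B^\ast$, so the implication to $\bot$ is valid. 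The body contains no IDB, so the symmetry condition is vacuous. Therefore the rule is in the $k_0$-program and derives the goal, contradicting that the test is passed. So $f$ is a solution respecting $L$.

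The only care needed is in checking membership in the canonical program. Variables in $R(x_1,\dots,x_m)$ may repeat, but this only lowers the variable count. The body conjuncts are EDB conjuncts, so linearity holds, and the reversed-rule condition does not apply. There is no real obstacle; it is just matching the definition of the $k$-program with $k=k_0=\maxArity$.
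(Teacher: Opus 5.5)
Your proposal is correct and follows the same route as the paper: weight zero together with passing the test forces every list to be a singleton, and the $k_0=\maxArity$-test then certifies every constraint tuple, so the induced map is a homomorphism. You also write out the specific rules of the canonical $k_0$-program, namely $\emptyset\;{:}{-}\;\emptyset(x)$ and the single-tuple refutation rule, which the paper's proof only invokes implicitly.
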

\begin{proof}
    Let $(\A,L)$ be an instance of $\csp(\B)$ with $\wt(L)=0$ that passes the $k_0$-test. Since it passes the $k_0$-test, no $L(x)$ can be empty. Hence, $\wt(L)=0$ implies that $L(x)$ is a singleton for every $x\in A$. Therefore, $L$ can be seen as a map from $\A$ to $\B$. Since $k_0=\maxArity$ and $(\A,L)$ passes the $k_0$-test the map $L$ provides a homomorphism from $\A$ to $\B$.
\end{proof}

\begin{lemma}\label{lem:replaceB}
Let $n\geq1$ and let $\B$ be a finite conservative structure for which $\Phi_{n-1}$ holds. 
Let $(\A,L)$ be an instance of $\csp(\B)$ with $\wt(L)\leq n$,  $(a,b)$ be a good pair relative to $L$, and $\emptyset\neq X\subseteq A$ such that
\begin{enumerate}
    \item $\B$ can neither pp-define $O_{a,b}$ nor $P_{a,b}$,
    \item $a,b\in L_{r_n}(x)$ for every $x\in X$, and
    \item every simple walk $\omega$ between elements of $X$ satisfies
    \begin{align*}
        (b,b)\in\extremes(\zzreal{\omega}{(\A,L)}).
    \end{align*}
\end{enumerate}
Then for every solution $f$ and every $x\in X$ with $f(x)=b$ there is a solution $g$ with $g(x)=a$ and $g(y)\neq b$ for all $y\in X$ with $f(y)\neq b$. 
\end{lemma}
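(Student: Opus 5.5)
Following the strategy of the corresponding lemma in \cite{DalmauLICS15}, we construct $g$ as a solution of a modified instance of smaller weight and then appeal to $\Phi_{n-1}$. Fix a solution $f$ and $x\in X$ with $f(x)=b$. We define a new list $L'$ as follows. Put $L'(x)\coloneqq\{a\}$. For $y\in X$ with $f(y)\neq b$, put $L'(y)\coloneqq L_{r_n}(y)\setminus\{b\}$. For every other $y$, put $L'(y)\coloneqq L_{r_n}(y)$. Since $a,b\in L_{r_n}(x)$, the set $\{a,b\}$ is counted in $\wt(L)$ but not in $\wt(L')$, so $\wt(L')\leq n-1$. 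We would also check that shrinking lists to subsets of $L_{r_n}\subseteq L$ never creates new two-element subsets. Any solution $g$ of $(\A,L')$ has the required properties. So by $\Phi_{n-1}$ it suffices to show that $(\A,L')$ passes the $k_{n-1}$-test.

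By Lemma~\ref{lem:inductionnumbersExist}(3), and because $f$ is a solution and hence $(\A,L)$ passes every test, $(\A,L_{r_n})$ passes the $k_{n-1}$-test. We then use Lemma~\ref{lem:symDLWalkCharakterization}: we must show $\zzreal{\omega}{(\A,L')}\neq\emptyset$ for every walk $\omega$ of width at most $k_{n-1}$. Suppose some walk $\omega$ fails. Cut $\omega$ at its visits to $X$ into segments whose interior avoids the modified vertices. Using the higher-arity statements of Section~\ref{sec:higherArity}, we would replace the segments between consecutive visits to $X$ by simple walks, that is, walks through single vertices of arity at most $\maxArity$. This is where the slack between $r_n$ and $k_{n-1}$ is spent.

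On each segment we need a zigzag realization in $L_{r_n}$ whose endpoints avoid $b$ where required and take the value $a$ at $x$. There are two sources of realizations. The solution $f$ restricted to the segment gives realizations whose endpoints are either $\neq b$ or equal to $b$. Hypothesis (3) supplies realizations with extremes $(b,b)$, and hypothesis (2) lets us start at $a$ where needed. We combine these through the good-pair property: for the pp-definable binary relation $R=\extremes(\real{\omega'}{(\A,L_{r_n})})$ of a suitable zigzag expansion $\omega'$, once $(a,c),(b,d),(b,c)\in R$ we get $(a,d)\in R$. This lets us transfer the value at one end from $b$ to $a$ while keeping the endpoint value $d=f(y)\neq b$ at the other end.

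The main obstacle is gluing these segment-wise realizations into one zigzag realization of the whole walk when the walk returns to $X$ several times. The endpoint values must be chosen consistently: $a$ at $x$, and a non-$b$ value wherever $f\neq b$. The zigzag freedom must be used to avoid propagating a forced $b$. Here I would invoke hypothesis (1): if consistent gluing failed, the accumulated extremes relations restricted to $\{a,b\}$ would pp-define $O_{a,b}$ (an implication pattern) or $P_{a,b}$ (a parity pattern, which can arise from cycles through $X$). Both are excluded. I expect this case analysis, carried out for higher-arity constraints via $\expansion$ and the simple-walk reduction, to be the hardest part.
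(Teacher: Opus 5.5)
\textbf{There is a genuine gap.} Your proposal breaks at the weight step, so $\Phi_{n-1}$ cannot be applied to $(\A,L')$.

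Recall that $\wt$ counts the distinct sets $U$ with $|U|\geq 2$ contained in \emph{some} list. Your $L'$ only shrinks the lists at $x$ and at the $y\in X$ with $f(y)\neq b$; every other vertex keeps $L_{r_n}(y)$. Any $y\in X\setminus\{x\}$ with $f(y)=b$ still satisfies $\{a,b\}\subseteq L_{r_n}(y)=L'(y)$ by hypothesis~(2). Vertices outside $X$ may also contain $\{a,b\}$, or all of $L_{r_n}(x)$. So in general $\wt(L')=\wt(L_{r_n})$ and nothing decreases. You also cannot delete $a$ or $b$ from every list containing $\{a,b\}$ without already knowing which value a suitable $g$ takes there.

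The missing idea is localisation. The paper works on the set $Z$ of vertices reachable from $x$ by a \emph{separating} simple walk, i.e.\ one for which no $c$ satisfies $(a,c),(b,c)\in\extremes(\zzreal{\omega}{(\A,L)})$. Let $L^a$ equal $L$ except $L^a(x)=\{a\}$. Separation gives $f(z)\in L_{r_n}(z)\setminus L^a_{\maxArity}(z)$ for every $z\in Z$. So every list on $Z$ strictly shrinks, and $\trace_Z(\A,L^a_{\maxArity})$ has weight at most $n-1$. That this instance passes the $k_{n-1}$-test follows directly from Lemma~\ref{lem:inductionnumbersExist}(2) and~(1); this is where the slack between $k_{n-1}$ and $r_n$ is actually spent. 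Then $\Phi_{n-1}$ yields a solution $h$ on $Z$, and $g$ is defined as $h$ on $Z$ and $f$ outside $Z$.

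Independently of the weight issue, your argument that $(\A,L')$ passes the $k_{n-1}$-test is not yet a proof.
\begin{itemize}
\item Walks of width $k_{n-1}$ cannot in general be replaced by simple walks; the $k$-test is much stronger than consistency along simple walks.
\item Your good-pair transfer from $(b,f(y))$ to $(a,f(y))$ needs some $c$ with $(a,c),(b,c)$ in the extremes relation, which is exactly non-separation. Along separating walks the values of $f$ cannot be kept, and your proposal has no mechanism for them.
\item The claim that a failed gluing would pp-define $O_{a,b}$ or $P_{a,b}$ is asserted rather than derived.
\end{itemize}

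In the paper the gluing is done on solutions, not on derivations. For a constraint tuple crossing the boundary of $Z$:
\begin{itemize}
\item Lemma~\ref{lem:withinZBijectionsArePPDefinable} makes $\{(a,h(z)),(b,f(z))\}$ pp-definable.
\item Lemma~\ref{lem:getBinaryWitness}, built on Lemma~\ref{lem:replacingBinTuples} (the only place where excluding $P_{a,b}$ is used), reduces a failure to a single pair of coordinates $i\in I$, $o\in O$.
\item Extending a separating walk from $x$ to $y_i$ by that constraint reaches $y_o\notin Z$. The resulting non-separation contradicts Lemma~\ref{lem:avoidingWalks}, which is where excluding $O_{a,b}$ is used.
\end{itemize}
Finally, $g(y)\neq b$ for $y\in X\cap Z$ with $f(y)\neq b$ follows from hypothesis~(3) together with separation.
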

The proof of this lemma will be given in Section~\ref{sec:replaceB}. 

\begin{lemma}\label{lem:minimalCounterExample}
Let $n\geq1$ and let $\B$ be a finite conservative structure for which $\Phi_{n-1}$ holds. If $\Phi_n$ does not hold, there is a satisfiable instance $(\A,L)$ of $\csp(\B)$ with $\wt(L)\leq n$, a good pair $(a,b)$ relative to $L$, and a nonempty $X\subseteq A$ such that 
\begin{enumerate}
    \item $a,b\in L_{r_n}(x)$ for every $x\in X$ and
    \item every simple walk $\omega$ between elements of $X$ satisfies
    \begin{align*}
        (b,b)\in\extremes(\zzreal{\omega}{(\A,L)}).
    \end{align*}
    \item every solution $f$ satisfies $b\in f(X)$.
\end{enumerate}
\end{lemma}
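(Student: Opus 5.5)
The plan is to take a minimal counterexample to $\Phi_n$ and then shrink it by one constraint tuple, so that it becomes satisfiable while every solution is still forced to use $b$ somewhere on $X$. Throughout I work under the standing hypothesis of Theorem~\ref{thm:main}(4), namely that $\B$ pp-defines no $O_{c,d}$ and no $P_{c,d}$. Without it Lemma~\ref{lem:goodPairExists} is unavailable, so I am reading the present lemma as implicitly carrying this hypothesis.

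\textbf{Step 1 (the weight cannot drop).} Suppose $\Phi_n$ fails, and pick an unsatisfiable instance $(\A,L)$ with $\wt(L)\leq n$ that passes the $k_n$-test. Among all such instances, choose one with $|A|$ plus the total number of constraint tuples minimal. By Lemma~\ref{lem:inductionnumbersExist}(3), the instance $(\A,L_{r_n})$ passes the $k_{n-1}$-test. Since $L_{r_n}\subseteq L$, every solution of $(\A,L_{r_n})$ is a solution of $(\A,L)$. Hence $\Phi_{n-1}$ forces $\wt(L_{r_n})=n$, so the families of at-least-two-element subsets of the lists of $L$ and of $L_{r_n}$ coincide. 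By Lemma~\ref{lem:Phi0Holds}, some list has size at least $2$. Lemma~\ref{lem:goodPairExists}, applied to $L_{r_n}$, then yields $x_0$ and a good pair $(a,b)$ with $a,b\in L_{r_n}(x_0)$. This pair is also good relative to $L$, because the defining condition only depends on which pairs occur inside lists, and that family is unchanged.

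\textbf{Step 2 (choosing $X$).} Let $X$ consist of $x_0$ together with all $y$ such that $a,b\in L_{r_n}(y)$ and every simple walk $\omega$ from $x_0$ to $y$ has $(b,b)\in\extremes(\zzreal{\omega}{(\A,L)})$. To obtain (2) for arbitrary pairs in $X$, I would show the defining relation is transitive and symmetric. Zigzag expansions of a concatenated walk include concatenations of zigzag expansions, and walks can be reversed. A walk between $y,y'\in X$ can be prefixed by a walk through $x_0$ and then cancelled by zigzagging back. Item (1) holds by definition.

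\textbf{Step 3 (the $b$-free list fails).} Let $L''$ be obtained from $L_{r_n}$ by deleting $b$ from $L_{r_n}(y)$ for every $y\in X$. If $(\A,L'')$ passed the $k_{n-1}$-test, I would want $\wt(L'')\leq n-1$ and then conclude by $\Phi_{n-1}$. The weight claim needs every $y$ with $\{a,b\}\subseteq L_{r_n}(y)$ to lie in $X$. If that fails, I would instead use Lemma~\ref{lem:inductionnumbersExist}(2) together with the walk characterization of Lemma~\ref{lem:symDLWalkCharakterization} to show that a point outside $X$ admits a simple walk to $x_0$ avoiding $(b,b)$. From that walk I would derive that $b$ can be removed at that point by the $r_n$-test, contradicting the fact that weights are equal. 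A solution of $(\A,L'')$ would be a solution of $(\A,L)$, which is impossible. So $(\A,L'')$ fails the $k_{n-1}$-test: there is a walk of width at most $k_{n-1}$ whose zigzag realisations all use $b$ on $X$.

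\textbf{Step 4 (satisfiability and (3)).} Delete one constraint tuple of $\A$ to get $\A'$, chosen along the witnessing walk. By minimality $(\A',L)$ is satisfiable. It still passes the $k_n$-test, by monotonicity under removing tuples, and it keeps the same list sets. Steps 1–3 can be rerun on $\A'$; the walk is chosen so that the deleted tuple is not used in it. The witnessing walk then persists in $\A'$, and this is meant to force every solution of $(\A',L)$ to take the value $b$ somewhere on $X$, giving (3).

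I expect Step 4, together with the weight claim in Step 3, to be the main obstacle. Moving from a failed bounded-width test to the statement that every actual solution uses $b$ on $X$ needs a careful choice of which tuple to delete. I would first try to delete a tuple that the walk of Step 3 does not use, keeping the test failure intact. Failing that, I would fall back on choosing $(\A,L)$ with a minimal number of tuples among counterexamples to $\Phi_n$.
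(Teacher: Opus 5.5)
Your proposal has a genuine gap, located where you suspected. Step~4, and with it the weight claim of Step~3, cannot be completed, and the idea that produces a satisfiable instance with property (3) is missing.

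\textbf{Why Step~4 fails.} Steps~1 and~3 both use that the instance is \emph{unsatisfiable}: to force $\wt(L_{r_n})=n$ via $\Phi_{n-1}$, and to conclude that $(\A,L'')$ fails the $k_{n-1}$-test. So they cannot be rerun on the satisfiable $\A'$. Reusing the old witnessing walk does not help either. Its zigzag realisations are empty only with respect to lists built from $L_{r_n}$ computed in $\A$, i.e.\ with the deleted tuple still present. A solution of $(\A',L)$ has no reason to respect those lists, and besides, the walk may use every tuple of $\A$. So nothing forces solutions of $\A'$ to take the value $b$ on $X$.

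\textbf{Step~3's weight claim.} Your fallback does not work. A simple walk $\omega$ from $x_0$ to $y$ with $(b,b)\notin\extremes(\zzreal{\omega}{(\A,L)})$ does not let the $r_n$-test remove $b$ at $y$. The reason is that $a\in L_{r_n}(x_0)$ may still propagate to $b$, for instance if $(a,b)$ lies in the extremes.

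\textbf{Step~2's transitivity.} This is also unjustified. A zigzag expansion of a walk from $y$ to $y'$ cannot detour through $x_0$. From $(b,b)\in\extremes(\zzreal{\eta\omega}{(\A,L)})$ you only learn $(c,b)\in\extremes(\zzreal{\omega}{(\A,L)})$ for some uncontrolled $c$. In addition, points in components not reachable from $x_0$ enter $X$ vacuously.

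\textbf{The missing idea.} The instance in the conclusion should not be a sub-instance of the counterexample but the \emph{unfolding of a walk}. The paper proceeds as follows.
\begin{enumerate}
\item Take the counterexample $(\C,M)$ minimal with respect to the lists: removing any value from any list makes the $k_n$-test fail.
\item Find $x$ with $a,b\in M_{r_n}(x)$. Otherwise $\wt(M_{r_n})<\wt(M)$, and Lemma~\ref{lem:inductionnumbersExist}(3) with $\Phi_{n-1}$ gives a solution.
\item Remove $b$ from $M(x)$ to get $M'$. Minimality and Lemma~\ref{lem:symDLWalkCharakterization} give a walk $\omega$ with $\zzreal{\omega}{(\C,M')}=\emptyset\neq\zzreal{\omega}{(\C,M)}$, hence a zigzag expansion $\omega'$ with $\real{\omega'}{(\C,M)}\neq\emptyset$.
\item Set $\A\coloneqq\struc^{\C}(\omega')$, $L\coloneqq M\circ h$ and $X\coloneqq h^{-1}(x)$.
\end{enumerate}
This $\A$ is satisfiable, by that realisation. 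Every solution uses $b$ on $X$, since otherwise it would be a realisation of $\omega'$ respecting $M'$. Properties (1) and (2) come from pushing walks of $\A$ down to $\C$ along $h$. Since all elements of $X$ are copies of the single point $x$, (2) reduces to closed simple walks at $x$, where $b\in M_{r_n}(x)$ suffices. No transitivity argument is needed.
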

\begin{proof}
    Assume $\Phi_n$ does not hold, then there exists  an instance $(\C,M)$ of $\csp(\B)$ with $\wt(M)\leq n$ such that $(\C,M)$ has no solution and $(\C,M)$ passes the $k_n$-test. Assume without loss of generality that $M$ is minimal in the sense that removing any element from any list will lead to a failure of the $k_n$-test. By Lemma~\ref{lem:Phi0Holds} $\wt(M)\geq 1$. Hence, Lemma~\ref{lem:goodPairExists} implies that there are $a,b\in B$ such that $(a,b)$ is a good pair relative to $M$. 
    Assume that $\{a,b\}\not\subseteq M_{r_n}(y)$ for all $y\in C$. Then $\wt(M_{r_n})\lneq\wt(M)$. By Lemma~\ref{lem:inductionnumbersExist}(3), $(\C,M_{r_n})$ passes the $k_{n-1}$-test. Hence, $\Phi_{n-1}$ implies that $(\C,M_{r_n})$ has a solution $f$. This $f$ would also be a solution of $(\C,M)$. A contradiction. Therefore, there exists an $x\in C$ with $a,b\in M_{r_n}(x)$.

    \begin{figure}
        \centering
        \structures
        \caption{The structures involved in the proof of Lemma~\ref{lem:minimalCounterExample}. Edges labelled with $k_n$ indicate that the $k_n$-test does not refute the existence of a  homomorphism.}
        \label{fig:structures}
    \end{figure}
    Define $M'$ to be the list obtained from $M$ by changing the value at $x$ to $M(x)\setminus\{b\}$. By minimality of $M$ the instance $(\C,M')$ does not pass the $k_n$-test. By Lemma~\ref{lem:symDLWalkCharakterization} there is a walk $\omega$ in $\C$ of width at most $k_n$ such that $\zzreal{\omega}{(\C,M')}=\emptyset$. Since $(\C,M)$ passes the $k_n$-test $\zzreal{\omega}{(\C,M)}\neq\emptyset$. Hence, there is a zigzag expansion $\omega'$ of $\omega$ with $\real{\omega'}{(\C,M)}\neq\emptyset$. Let $h$ be the homomorphism from 
    $\A\coloneqq (\struc^{\C}(\omega'))$ to $\C$ provided by the construction of $\A$. Define $L\colon A\to 2^B, y\mapsto M(h(y))$, $L'\colon A\to 2^B, y\mapsto M'(h(y))$, and $X\coloneqq h^{-1}(x)$.
    Since $\real{\omega'}{(\C,M)}\neq\emptyset$ and $\zzreal{\omega}{(\C,M')}=\emptyset$ we have that $(\A,L)$ has a solution and $(\A,L')$ has no solution, see Figure~\ref{fig:structures} for a depiction of the structures and their relationships. 
    Therefore, for every solution $f$ of $(\A,L)$ there must be a $y\in X$ with $f(y)=b$, otherwise $f$ would be a solution of $(\A,L')$. Hence, (3) holds. In particular, $X\neq\emptyset$.
    
    To show (1) note that every walk $\eta$ in $\A$ gives, via $h$, a walk $\eta'$ in $\C$ such that $\real{\eta'}{(\C,M)}\subseteq \real{\eta}{(\A,L)}$. Hence, $M_{r_n}(h(y))\subseteq L(y)$ for all $y\in A$. In particular, $a,b\in M_{r_n}(x)=M_{r_n}(h(y))\subseteq L_{r_n}(y)$ for all $y\in X$.
    For (2) let $\eta$ be a simple walk between $x_1\in X$ and $x_2\in X$. Let $\eta'$ be the corresponding walk obtained via $h$ from $h(x_1)=x$ to $h(x_2)=x$. Since $\eta'$ is simple, $r_n\geq\maxArity$, and $b\in M_{r_n}(x)$ we get $(b,b)\in\extremes(\zzreal{\eta'}{(\C,M)})\subseteq \extremes(\zzreal{\eta}{(\A,L)})$.
\end{proof}

Combining the previous two lemmata we can prove the main theorem.
\begin{proof}[Proof of Theorem~\ref{thm:main}]
    The equivalence of $(2)$ and $(3)$ follows from Theorems~\ref{thm:stConIffHM} and~\ref{thm:Lin2Iff34WNU}. The direction $(1)\Rightarrow (3)$ follows from Theorems~\ref{thm:3lin2NotInDL} and~\ref{thm:stconNotInSymDL}. Clearly, $(3)$ implies $(4)$. 
    
    $(4)\Rightarrow(1)$: Assume there is an $n\geq$ such that $\Phi_n$ does not hold. By Lemma~\ref{lem:Phi0Holds}, $\Phi_0$ holds. Hence, $n\geq 1$ and Lemma~\ref{lem:minimalCounterExample} provides a satisfiable instance $(\A,L)$ and a set $X\subseteq A$. By (4) these now satisfy all the assumptions of Lemma~\ref{lem:replaceB}. Let $f$ be a solution of $(\A,L)$. Repeatedly applying Lemma~\ref{lem:replaceB} yields a solution $g$ with $g(y)\neq b$ for all $y\in X$. This contradicts Lemma\ref{lem:minimalCounterExample} stating that $b\in g(X)$.
    Therefore, $\Phi_n$ holds for all $n\geq 0$. Since the weight of a list is bounded by $N\coloneqq \m\{U\subseteq B\mid \m U\m\geq 2\}\m=2^{\m B\m}-\m B\m-1$ we conclude that the $k_N$-test solves $\csp(\B)$.
\end{proof}

In the following two sections we will prove Lemma~\ref{lem:replaceB}.

\section{Binary failure in higher arity replacement}\label{sec:higherArity}
In this section we produce the results that will enable us to lift the approach Dalmau, Egri, Hell, Larose, and Rafiey used for conservative structures with binary signature to conservative structures higher-ary signatures. This is also the only part where the assumption that $\B$ can not pp-define $P_{a,b}$ will be used explicitly.

\begin{lemma}\label{lem:replacingBinTuples}
Let $\B$ be a finite conservative structure, $k\geq1$, $R$ be a pp-definable $(k+1)$-ary relation, and $(a,b)$ be a good pair relative to $L\colon i\mapsto \pi_i(R)$ such that $\B$ can neither pp-define $O_{a,b}$ nor $P_{a,b}$. Then for all $(b,t)\in R$ with $t_i\in\pi_i(R_a)$ for all $i\in[k]$ we have $(a,t)\in R$.
\end{lemma}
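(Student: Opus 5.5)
The plan is to argue by induction on $k$, writing $R_a\coloneqq\{t\mid (a,t)\in R\}$ and $R_b$ analogously, and indexing the coordinates of $R$ by $0,\dots,k$. For $k=1$ there is nothing to do: $t_1\in\pi_1(R_a)$ already says $(a,t_1)\in R$. For $k\geq 2$, apply the induction hypothesis to the $k$-ary projections $\pi_{[0,k]\setminus\{j\}}(R)$, which are pp-definable. For this I will check that $(a,b)$ is still a good pair relative to the induced list; this holds because the coordinate lists only shrink or stay the same, and the defining condition of a good pair is about all pp-definable binary relations. This gives, for every $j\in[k]$, an element $u_j$ with $(a,t[j\mapsto u_j])\in R$, where $t[j\mapsto u_j]$ replaces the $j$-th entry of $t$ by $u_j$. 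We may assume $u_j\neq t_j$, otherwise we are done.

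Next I use conservativity to cut $R$ down. Intersect coordinate $0$ with $\{a,b\}$, coordinate $j$ with $\{t_j,u_j\}$, and pin all coordinates except two chosen ones $1,2$ to the singletons $\{t_j\}$. The result is a pp-definable ternary relation
\[S\subseteq\{a,b\}\times\{t_1,u_1\}\times\{t_2,u_2\}\]
containing $(b,t_1,t_2)$, $(a,u_1,t_2)$ and $(a,t_1,u_2)$. We want $(a,t_1,t_2)\in S$. Suppose not. Identifying $t_i$ with $b$ and $u_i$ with $a$, the three known tuples are $bbb$, $aab$, $aba$ of $P_{a,b}$, and the missing tuple $abb$ is exactly one of the tuples $P_{a,b}$ excludes. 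So the goal is to show that $S$, after transport to $\{a,b\}$, is either $P_{a,b}$ or yields $O_{a,b}$, contradicting the hypothesis.

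There are two subcases, according to whether $(b,u_1,u_2)\in S$. If it is not, I expect the binary projections of $S$, with one coordinate pinned, to produce an $O$-shaped relation. For example, fixing coordinate $2$ to $t_2$ gives a binary relation containing $(b,t_1)$ and $(a,u_1)$ but not $(a,t_1)$; together with the remaining tuples this should pp-define an $O$-type relation between the two-element sets. If $(b,u_1,u_2)\in S$, then $S$ restricted to these eight candidate tuples is forced to be the $P$-pattern, possibly enlarged by further tuples, and each enlargement can be handled by pinning a coordinate and getting a smaller binary witness.

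The main obstacle is transporting relations between $\{t_i,u_i\}$ and $\{a,b\}$: $O_{a,b}$ and $P_{a,b}$ live on $\{a,b\}$, while $S$ has mixed sorts. Here the good pair hypothesis has to enter. Take the pp-definable binary relations $\pi_{0,i}(R)$ restricted to $\{a,b\}\times\{t_i,u_i\}$. They contain $(a,u_i)$, $(b,t_i)$ and, by assumption on $t$, also $(a,t_i)$. Goodness of $(a,b)$, applied with $c=t_i$ and $d=u_i$, then forbids the pattern in which $(b,u_i)$ is present and $(a,u_i)$ is missing. I would first try to use these binary relations as pp-definable ``translations'', composing $S$ with them coordinatewise, to obtain a relation on $\{a,b\}^3$ that contains $bbb$, $aab$, $aba$ and omits $abb$. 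Then I would use the absence of $O_{a,b}$ (so all binary pp-definable relations on $\{a,b\}$ are ``symmetric-like'' or trivial) to show that this relation is $P_{a,b}$ or pp-defines $O_{a,b}$, either of which contradicts the hypothesis. Making this composition argument rigorous, i.e.\ ensuring that the translation does not add back $abb$, is where I expect the real work to be.
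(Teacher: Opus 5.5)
Your reduction is sound and follows the paper. The induction on $k$ is correct, since coordinate $0$ of each projection still contains $a$ and $b$, so the good-pair hypothesis is inherited; it replaces the paper's choice of a minimal $J$. Your ternary $S$ is the paper's $T_{\{i,j\}}$, with your $u_j$ in the role of $s_j$.

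There is a genuine gap, and it sits in the step you flag yourself; the route you suggest for it fails. The relation $\pi_{0,i}(R)\cap(\{a,b\}\times\{t_i,u_i\})$ contains both $(a,t_i)$ and $(a,u_i)$, so it is never the graph of a bijection. Your application of goodness to it is vacuous, because its conclusion $(a,u_i)$ holds anyway. Composing $S$ with these relations coordinatewise already turns $(b,t_1,t_2)$ into $baa$ and $bab$, so the result is not $P_{a,b}$.

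An ``$O$-shaped'' relation inside $\{a,b\}\times\{t_1,u_1\}$ is also not a contradiction by itself. For example, $\{(b,t_1),(a,u_1),(a,t_1)\}$ is consistent with goodness, and it only becomes $O_{a,b}$ once you can translate $\{t_1,u_1\}$ to $\{a,b\}$.

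Your second subcase does not close either. If $S$ corresponds to $P_{a,b}\cup\{aaa\}$, pinning one coordinate to a constant only ever gives a bijection or a copy of $\{(a,a),(a,b),(b,a)\}$, so the extra tuple $(a,u_1,u_2)$ is never exposed.

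The missing idea is to pin the other coordinates rather than project them away. Let
$T_i\coloneqq\{(c,r_i)\mid (c,r)\in R,\ r_j=t_j \text{ for all } j\neq i\}\cap(\{a,b\}\times\{t_i,u_i\})$.
It contains $(b,t_i)$ and $(a,u_i)$ but \emph{not} $(a,t_i)$, precisely because $(a,t)\notin R$. Goodness with $c=u_i$, $d=t_i$ then excludes $(b,u_i)$.

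So $T_i$ is the pp-definable graph of $\theta_i\colon a\mapsto u_i,\ b\mapsto t_i$. This is the transport you need, and it also removes $(b,u_1,t_2)$ and $(b,t_1,u_2)$ from $S$. After transport, $S\subseteq\{a,b\}^3$ contains $bbb,aab,aba$ and omits $abb,bab,bba$. Two more steps finish it:
\begin{itemize}
\item $baa\in S$, since otherwise the projection $\{(c,c_1)\mid (c,c_1,c_2)\in S\}$ equals $O_{a,b}$.
\item $aaa\notin S$, since otherwise the diagonal relation $\{(c,c_2)\mid (c,c,c_2)\in S\}$ equals $O_{a,b}$. This identification of coordinates only makes sense after the transport.
\end{itemize}
Hence $S=P_{a,b}$, a contradiction. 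This is the paper's argument.
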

\begin{proof}
    Suppose there exists $t=(t_1,\dots,t_k)\in R_b\setminus R_a$ with $t_i\in\pi_i(R_a)$ for all $i$.  
    Let $J\subseteq [k]$ be minimal (with respect to inclusion) with $t_J\notin P_a\coloneqq\pi_J({R_a})$. Since $t_i\in\pi_i(R_a)$ we have $|J|\geq 2$. Minimality of $J$ implies that for each $i\in J$ there is an $s_i\in B$ such that the tuple $t_J[s_i]\colon J\to B, i\mapsto s_i,j\mapsto t_j$ for all $j\in J\setminus\{i\}$ belongs to $P_a$. For all $I\subseteq J$ define 
    \[T_I\coloneqq\{(c,r_I)\mid (c,r)\in R, r_j=t_j\text{ for all }j\in J\setminus I\}\cap\left(\{a,b\}\times\prod_{j\in I}\{t_j,s_j\}\right).\]
    Note that, since $\B$ is conservative, $T_I$ is pp-definable. 
    Consider $i\in J$. Then $t\in R_b$ and $t_J[s_i]\in P_a$ imply $(b,t_i),(a,s_i)\in T_{\{i\}}$. Since $t_J\notin P_a$ we have $(a,t_i)\notin T_{\{i\}}$.
    Note that $(a,b)$ is a good pair relative to the (possibly) smaller list $(\{a,b\},\{t_i,s_i\})$. Hence, since $T_{\{i\}}$ is pp-definable, $(b,s_i)\notin T_{\{i\}}$ and $T_{\{i\}}=\{(b,t_i),(a,s_i)\}$ is the graph of a bijection $\theta_i\colon\{a,b\}\to\{t_i,s_i\}$.

    Let $i\lneq j\in J$. Define $S\coloneqq\{(c,c_i,c_j)\mid (c,\theta_i(c_i),\theta_j(c_j))\in T_{\{i,j\}}\}$ and observe that $S$ is pp-definable. Note that we have the following implications
    \begin{align*}
        &\text{$(b,t)\in R$ implies $(b,b,b)\in S$,}
         &&\text{$t_J\notin P_a$ implies $(a,b,b)\notin S$,}\\
         &\text{$t_J[s_i]\in P_a$ implies $(a,b,a)\in S$,}
         &&\text{$(b,s_i)\notin T_{\{i\}}$ implies $(b,a,b)\notin S$,}\\ 
         &\text{$t_J[s_j]\in P_a$ implies $(a,a,b)\in S$,}
         &&\text{$(b,s_j)\notin T_{\{j\}}$ implies $(b,b,a)\notin S$.}
    \end{align*}
    For the two remaining tuples we have
    \begin{itemize}
        
        
        \item $(b,a,a)\in S$, otherwise $\{(c,c_i)\mid (c,c_i,c_j)\in S\}=\{(b,b),(a,b),(a,a)\}=O_{a,b}$ is pp-definable
        \item $(a,a,a)\notin S$, otherwise $\{(c,c_j)\mid (c,c,c_j)\in S\}=\{(b,b),(a,b),(a,a)\}=O_{a,b}$ is pp-definable
    \end{itemize}
    Therefore, $S=\{(b,a,a),(a,b,a),(a,a,b),(b,b,b)\}=P_{a,b}$ is pp-definable. A contradiction.
\end{proof}

\begin{lemma}\label{lem:getBinaryWitness}
Let $\B$ be a finite conservative structure, $k\geq1$, $R$ be a pp-definable $k$-ary relation, $s,t\in R$, $\emptyset\neq I\subseteq[k]$, $J=[k]\setminus I$, and $(a,b)$ be a good pair relative to $L\colon i\mapsto \pi_i(R)$ such that $\B$ can neither pp-define $O_{a,b}$ nor $P_{a,b}$ and for all $i\in I$ the set $\{(a,s_i),(b,t_i)\}$ is pp-definable. 
If $(s_I,t_J)\notin \pi_{I,J}(R)$ then there are $i\in I$ and $j\in J$ such that $(s_i,t_j)\notin\pi_{i,j}(R)$. In particular, there is no $r_j$ with $(s_i,r_j)\in\pi_{i,j}(R)$ and $(t_i,r_j) \in\pi_{i,j}(R)$. 
\end{lemma}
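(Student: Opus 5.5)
The plan is to reduce to Lemma~\ref{lem:replacingBinTuples} by ``collapsing'' all coordinates in $I$ into a single new coordinate, using the pp-definable bijections given by the hypothesis. For each $i\in I$ let $\theta_i\coloneqq\{(a,s_i),(b,t_i)\}$, which is pp-definable by assumption. Define
\[Q(c,y_J)\;:\Leftrightarrow\;\exists z\,\bigl(R(z)\wedge z_J=y_J\wedge \textstyle\bigwedge_{i\in I}\theta_i(c,z_i)\bigr).\]
This relation is pp-definable and $(|J|+1)$-ary; note $J\neq\emptyset$, since otherwise $(s_I,t_J)=s\in R$. Since $t\in R$ we get $(b,t_J)\in Q$, and since $s\in R$ we get $(a,s_J)\in Q$. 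On the other hand, $(a,t_J)\in Q$ would give some $z\in R$ with $z_I=s_I$ and $z_J=t_J$, contradicting $(s_I,t_J)\notin\pi_{I,J}(R)$. The projections of $Q$ are contained in those of $R$, plus the new first coordinate whose list is $\{a,b\}$. Hence $(a,b)$ is still a good pair relative to $i\mapsto\pi_i(Q)$: the goodness condition only quantifies over pairs inside lists, and the lists only shrank.

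Applying Lemma~\ref{lem:replacingBinTuples} to $Q$ with the tuple $t_J$ now yields some $j\in J$ with $t_j\notin\pi_j(Q_a)$, i.e.\ $(a,t_j)\notin\pi_{1,j}(Q)$. At the same time $(a,s_j),(b,t_j)\in\pi_{1,j}(Q)$. Goodness (with $c=s_j$, $d=t_j$) then forces $(b,s_j)\notin\pi_{1,j}(Q)$, exactly as in the proof of Lemma~\ref{lem:replacingBinTuples}. Thus $\pi_{1,j}(Q)\cap(\{a,b\}\times\{s_j,t_j\})$ is the graph of the bijection $a\mapsto s_j$, $b\mapsto t_j$, and it is pp-definable by conservativity. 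In other words, we have reduced to the case $J=\{j\}$: we have $(s_I,t_j)\notin\pi_{I\cup\{j\}}(R)$, and now the coordinate $j$ also carries a pp-definable bijection with $\{a,b\}$.

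It remains to isolate a single $i\in I$, and I would do this by induction on $|I|$. If $|I|=1$ we are done. Otherwise, pick $i_0\in I$ and repeat the collapsing trick with the roles of the sides exchanged: collapse $j$ together with $I\setminus\{i_0\}$ into a new $\{a,b\}$-coordinate via the $\theta$'s, keep $i_0$ as a free coordinate, and apply Lemma~\ref{lem:replacingBinTuples} again. Either this produces the desired pair $(i_0,j)$, or $i_0$ can be dropped from $I$ and the induction hypothesis applies to the smaller set. The final ``in particular'' clause follows from the bijection structure. Suppose some $r_j$ had $(s_i,r_j),(t_i,r_j)\in\pi_{i,j}(R)$. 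By conservativity we can restrict to $\{s_j,t_j\}$ and use $\theta_i$ and $\theta_j$ to obtain a pp-definable binary relation between $\{a,b\}$ and $\{s_j,t_j\}$. This relation contains $(a,\cdot)$ and $(b,\cdot)$ at a common value together with the two bijection tuples, and goodness then forces $(s_i,t_j)\in\pi_{i,j}(R)$, a contradiction.

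The main obstacle I expect is the orientation of $a$ and $b$ in the second application of Lemma~\ref{lem:replacingBinTuples}. Good pairs are not symmetric, and the missing tuple after the swap has $b$ rather than $a$ in the collapsed coordinate. My first attempt would be to choose which side is collapsed so that the excluded tuple always has first entry $a$. Failing that, I would re-run the $T_{\{i\}}$/$S$ argument from the proof of Lemma~\ref{lem:replacingBinTuples} directly on the coordinates $\{i,j\}$, which again ends in pp-defining $O_{a,b}$ or $P_{a,b}$.
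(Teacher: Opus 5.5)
Your first phase is correct and matches the paper. You collapse $I$ via the $\theta_i$ and apply Lemma~\ref{lem:replacingBinTuples} to $Q$, getting $j\in J$ with $(a,t_j)\notin\pi_{1,j}(Q)$. Goodness then gives $(b,s_j)\notin\pi_{1,j}(Q)$, so $\theta_j=\{(a,s_j),(b,t_j)\}$ is pp-definable.

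\textbf{The gap is in your second phase.} The induction step you describe does not do anything. If you collapse $j$ together with $I\setminus\{i_0\}$ and leave only $i_0$ free, you get a binary relation. Lemma~\ref{lem:replacingBinTuples} is vacuous for a single free coordinate, since $t_{i_0}\in\pi_1(R_a)$ already means $(a,t_{i_0})\in R$. Moreover, the known missing tuple $(s_I,t_j)$ uses $a$-values on $I$ but the $b$-value on $j$. It therefore does not correspond to any tuple $(c,y_{i_0})$ of the collapsed relation, which forces the same $c$ on all collapsed coordinates. Your case split is logically fine, but nothing in the proposal handles the branch $(s_{I\setminus\{i_0\}},t_j)\in\pi_{I\setminus\{i_0\},\{j\}}(R)$. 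There you must show $(s_{i_0},t_j)\notin\pi_{i_0,j}(R)$, which is the entire content of the lemma. The orientation problem you flag at the end is exactly this obstruction, and it is left unresolved.

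\textbf{The missing idea is that you already hold a non-membership with the right orientation.} The fact $(b,s_j)\notin\pi_{1,j}(Q)$ says $(t_I,s_j)\notin\pi_{I,\{j\}}(R)$. Collapse only the coordinate $j$: let $T(c,y_I)$ hold iff some $r\in R$ has $r_I=y_I$ and $r_j=\theta_j(c)$. Then $(b,t_I)\in T$ and $(a,t_I)\notin T$, which is precisely the shape Lemma~\ref{lem:replacingBinTuples} needs. It yields $i\in I$ with $(a,t_i)\notin\pi_{1,i}(T)$. Since $(a,s_i),(b,t_i)\in\pi_{1,i}(T)$, goodness with $c=s_i$, $d=t_i$ forces $(b,s_i)\notin\pi_{1,i}(T)$. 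That says exactly that no $r\in R$ has $r_i=s_i$ and $r_j=t_j$, i.e.\ $(s_i,t_j)\notin\pi_{i,j}(R)$. So goodness is used twice to flip orientation, and no induction on $|I|$ is needed.

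\textbf{A smaller point on the final clause.} You should not restrict to $\{s_j,t_j\}$, since $r_j$ need not lie there. Instead use $\{(c,q)\mid(\theta_i(c),q)\in\pi_{i,j}(R)\}$ directly. It contains $(a,r_j),(b,r_j),(b,t_j)$, so goodness gives $(a,t_j)$, i.e.\ $(s_i,t_j)\in\pi_{i,j}(R)$, a contradiction. Only $\theta_i$ is needed here, not $\theta_j$.
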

\begin{proof}
    For $i\in[k]$ define $\theta_i\colon\{a,b\}\to\{s_i,t_i\},a\mapsto s_i,b\mapsto t_i$. By assumption the graph of $\theta_i$ is pp-definable for all $i\in I$. Therefore, the relations
    \begin{align*}
        Q&\coloneqq\{(c,r_J)\mid c\in\{a,b\}, (\theta(c),r_J)\in \pi_{I,J}(R)\}\text{ and}\\
        Q_{j}&\coloneqq \{(c,r_j)\mid (c,r_J)\in Q\} \text{ for }j\in J
    \end{align*}
    where $\theta(a)=s_I,\theta(b)=t_I$, are pp-definable. 
    By definition $t\in R$ and $(\theta(a),t_J)=(s_I,t_J)\notin \pi_{I,J}(R)$. Hence $(b,t_J)\in Q$ and $(a,t_J)\notin Q$.
    By Lemma~\ref{lem:replacingBinTuples}, there must exist $j\in J$ such that $(a,t_j)\notin Q_{j}$. Since $s,t\in R$ we have $(a,s_j),(b,t_j)\in Q_j$. Because $(a,b)$ is good relative to $L$ and $\{s_j,t_j\}\subseteq L(j)$ we conclude $(b,s_j)\notin Q_j$. Hence, $Q_j$ is the graph of $\theta_j$ and the relations
     \begin{align*}
     T&\coloneqq \{(c,r_I)\mid c\in\{a,b\},r\in R, \theta_j(c)=r_j\}\\    
     T_i&\coloneqq\{(c,r_i)\mid (c,r_I)\in T\}\text{ for }i\in I
     \end{align*}
     are  pp-definable. By definition $s,t\in R$ imply $(b,t_I)\in T$. 
     Assume $(a,t_I)\in T$. Then there is an $r\in R$ such that $r_j=s_j$ and $r_I=t_I$. Hence, $(\theta(b),r_J)=(t_I,r_J)\in\pi_{I,J}(R)$. Therefore, $(b,r_J)\in Q$ and $(b,s_j)=(b,r_j)\in Q_j$, a contradiction. 
     We conclude that $(a,t_I)\notin T$. By Lemma~\ref{lem:replacingBinTuples} there is an $i\in I$ with $(a,t_i)\notin T_i$. Since $s,t\in R$ we have $(a,s_i),(b,t_i)\in T_i$. Because $(a,b)$ is good relative to $L$ and $\{s_i,t_i\}\subseteq L(i)$ we conclude $(b,s_i)\notin T_j$. 
     Therefore, $(s_i,t_j)\notin\pi_{i,j}(R)$. Since otherwise there would be a tuple $r\in R$ with $r_i=s_i$ and $r_j=t_j=\theta_j(b)$ contradicting $(b,s_i)\notin T_j$.
     
     For the final part, assume there is an $r_j$  with $(s_i,r_j)\in\pi_{i,j}(R)$ and $(t_i,r_j) \in\pi_{i,j}(R)$. Since $(t_i,t_j)\in\pi_{i,j}(R)$ the pp-definable relation  $\{(c,q_j)\mid (\theta_i(c),q_j)\in\pi_{i,j}(R)\}$ contains $(a,r_j),(b,r_j)$, and $(b,t_j)$. Because $(a,b)$ is a good pair relative to $L$ and $\{t_j,r_j\}\subseteq L(j)$ the relation also contains $(a,t_j)$. Since $s_i=\theta_i(a)$, this contradicts $(s_i,t_j)\notin\pi_{i,j}(R)$.
\end{proof}

\section{Replacing $b$ by $a$ in a solution}\label{sec:replaceB}
In this section we will prove Lemma~\ref{lem:replaceB}. 
Throughout this section fix 
an instance $(\A,L)$ of $\csp(\B)$, a good pair $(a,b)$ relative to $L$, a nonempty set $X\subseteq A$, a solution $f$, and an $x\in X$ with $f(x)=b$ such that all the assumptions of Lemma~\ref{lem:replaceB} are satisfied. 
First we define the region that is affected by changing the value of $f$ at $x$ to $a$.
\begin{definition}
A simple walk $\omega$ in $\A$ is \emph{separating} if there is no $c\in B$ with $(a,c),(b,c)\in\extremes(\zzreal{\omega}{(\A,L)})$. 
Define the set
\[Z\coloneqq\{z\in A\mid \text{there is a separating walk from $x$ to $z$}\}.\]
\end{definition}

\begin{observation}\label{obs:separatingWalks}
We make a few simple observations about $Z$ and separating walks.
    \begin{enumerate}
        \item Note that the walk $x$ is a separating from $x$ to $x$. Hence $x\in Z$.  
        \item For each separating walk $\omega$ from $x$ to $y\in A$ there are $c,d\in L(y)$ with $(a,c),(b,d)\in \extremes(\zzreal{\omega}{(\A,L)})$, otherwise, since $r_n\geq\maxArity$, $a$ and $b$ would not be in $L_{r_n}(x)$.
        \item If we change the value of $f$ at $x$ to $a$ then the values of $f$ at elements of $Z$ must also change. 
    \end{enumerate}
\end{observation}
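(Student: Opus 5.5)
The three items follow more or less directly from the definitions, together with Lemma~3 of \cite{DalmauLICS15} applied to reversed walks. I treat them in order; the only real care is needed in item (2), in reversing a walk and matching its zigzag expansions.

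For item (1), I read the walk $x$ as the walk with $m=1$ and $y_1=x$. Its only zigzag expansion is the walk itself, and a realization respecting $L$ just picks a value in $L(x)$. Hence $\extremes(\zzreal{x}{(\A,L)})\subseteq\{(c,c)\mid c\in L(x)\}$ lies on the diagonal. Suppose some $c$ had $(a,c),(b,c)$ in this set. Then $a=c=b$, which contradicts $a\neq b$. So the walk is separating and $x\in Z$.

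For item (2), let $\omega$ be a separating walk from $x$ to $y$. Assume, say, that there is no $d$ with $(b,d)\in\extremes(\zzreal{\omega}{(\A,L)})$; the case of $a$ is symmetric. First I observe that the values at the last vertex of a realization automatically lie in $L(y)$, since realizations respect $L$. Next I take the reversed walk $\omega^{-1}$ from $y$ to $x$. Reversing a zigzag expansion of $\omega$ gives a zigzag expansion of $\omega^{-1}$ with the same realizations, and conversely. Hence $\last(\zzreal{\omega^{-1}}{(\A,L)})=\first(\zzreal{\omega}{(\A,L)})$, and this set does not contain $b$.

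Since $\omega$ is simple, its width is at most $\maxArity\leq r_n$. So Lemma~3 of \cite{DalmauLICS15} shows that the $r_n$-test derives on $x$ a unary IDB that omits $b$. This gives $b\notin L_{r_n}(x)$, contradicting assumption (2) of Lemma~\ref{lem:replaceB}. This is the step that needs the most care, namely checking that reversal matches zigzag expansions exactly; it follows because conditions 1--3 of a zigzag expansion are symmetric under reversal once the endpoints are swapped.

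For item (3), I make the claim precise as follows: if $g$ is a solution with $g(x)=a$ and $z\in Z$, then $g(z)\neq f(z)$. Let $\omega$ be a separating walk from $x$ to $z$. Composing $f$ with the canonical map $h\colon\struc^{\A}(\omega)\to\A$ gives a realization of $\omega$ respecting $L$, and the trivial zigzag expansion $\omega$ itself witnesses $(f(x),f(z))=(b,f(z))\in\extremes(\zzreal{\omega}{(\A,L)})$. In the same way $g\circ h$ witnesses $(a,g(z))$. If $g(z)=f(z)=c$, then both $(a,c)$ and $(b,c)$ would lie in the extremes, contradicting that $\omega$ is separating. Hence $g(z)\neq f(z)$ for every $z\in Z$.
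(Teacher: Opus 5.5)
Your proof is correct and follows the paper's reasoning. For item (2), the paper's one-line justification (that otherwise the $r_n$-test would remove $a$ or $b$ from $L_{r_n}(x)$) is exactly your argument; you make explicit the walk-reversal step that the paper leaves implicit, which is needed so that Lemma~3 of \cite{DalmauLICS15}, a statement about the last coordinate, applies at $x$. Items (1) and (3) are the intended direct unwinding of the definitions, and your version of (3) (every solution $g$ with $g(x)=a$ differs from $f$ at every point of $Z$) is the natural precise reading of the paper's informal statement.
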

\begin{definition}
    For a list $M\colon A\to 2^B$ define $\trace_{Z}(\A,M)$ as $(\expansion(\A)|_Z,M|_Z)$, where $\expansion(\A)|_Z$ is the by $Z$ induced substructure of $\expansion(\A)$. 
\end{definition}
The expansion makes sure that $\trace_Z(\A,M)$ can still see a trace of how $Z$ connects to the rest of $A$. 
We want to replace the values of $f$ at elements of $Z$ with those of a solution of $\trace_Z(\A,M)$. To make sure that the new map sends $x$ to $a$ we can not take $M=L$ but instead need a new list.
Fix the list $L^a\colon A\to 2^B$ with $L^a(x)=a$ and $L^a(y)=L(y)$ for all $y\in A\setminus\{x\}$. Recall that $\maxArity$ denotes the maximal arity of a relation of $\B$.

\begin{lemma}
    The instance $\trace_Z(\A,L^a_{\maxArity})$ has a solution.
\end{lemma}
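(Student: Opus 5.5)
The plan is to use the inductive hypothesis $\Phi_{n-1}$ on the trace instance. Set $(\A',M)\coloneqq\trace_Z(\A,L^a_{\maxArity})=(\expansion(\A)|_Z,L^a_{\maxArity}|_Z)$. Two things need to be shown:
\begin{itemize}
\item[(i)] $\wt(M)\leq n-1$;
\item[(ii)] $(\A',M)$ passes the $k_{n-1}$-test.
\end{itemize}
Then $\Phi_{n-1}$ gives a solution. Note that $\Phi_{n-1}$ is stated for instances of $\csp(\B)$, and $\expansion(\A)|_Z$ is a $\tau$-structure because $\B=\expansion(\B)$, so it applies directly.

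For (i), I would argue that the two-element set $\{a,b\}$ contributes to $\wt(L)$ but is never a subset of $M(z)$ for $z\in Z$. Since $\{a,b\}\subseteq L(x)$ by assumption (2), it counts towards $\wt(L)$. Now take $z\in Z$ with a separating walk $\omega$ from $x$ to $z$. Suppose $a,b\in L^a_{\maxArity}(z)$. The list $L^a_{\maxArity}$ forces $x\mapsto a$, and $\omega$ is simple, so its realizations are derivable by the $\maxArity$-program (Lemma~3 of \cite{DalmauLICS15}). From this I want to conclude that both $a$ and $b$ at $z$ are reachable from $a$ at $x$ along zigzag realizations, i.e. $(a,a),(a,b)\in\extremes(\zzreal{\omega}{(\A,L)})$. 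Observation~\ref{obs:separatingWalks}(2) gives some $d$ with $(b,d)\in\extremes(\zzreal{\omega}{(\A,L)})$. Assumption (3) of Lemma~\ref{lem:replaceB}, applied to simple walks between elements of $X$, together with the good pair property for the pp-definable relations $\extremes(\real{\omega'}{(\A,L)})$, should then force a common $c$ with $(a,c),(b,c)\in\extremes(\zzreal{\omega}{(\A,L)})$. This contradicts separation.

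The delicate point in (i) is that $\zzreal{}{}$ is a union and need not be pp-definable. I would therefore work with one fixed zigzag expansion $\omega'$ realizing the relevant pairs and apply goodness to $\extremes(\real{\omega'}{(\A,L)})$. Concatenating with a reversed walk then converts $(a,b),(b,d)$ into a shared value. Also $M(z)\subseteq L(z)$, so every set counted in $\wt(M)$ is counted in $\wt(L)$. Together this gives $\wt(M)\leq\wt(L)-1\leq n-1$.

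For (ii), I would use Lemma~\ref{lem:inductionnumbersExist}. By assumption (2), $a\in L_{r_n}(x)$, so the $r_n$-test does not derive $B\setminus\{a\}$ on $x$. Item (2) of Lemma~\ref{lem:inductionnumbersExist} then shows that $(\A,L^a)$ passes the $s_n$-test, after checking that the original instance passes enough of the test: the instance comes from a walk realization and is satisfiable. Item (1) then shows that $(\expansion(\A),L^a_{\maxArity})$ passes the $k_{n-1}$-test. Passing the test is preserved under induced substructures, since every walk in $\expansion(\A)|_Z$ is a walk in $\expansion(\A)$ with the same realizations (Lemma~\ref{lem:symDLWalkCharakterization}). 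Hence $(\A',M)$ passes the $k_{n-1}$-test. I expect step (ii) to be routine and the weight drop in (i) to be the main obstacle.
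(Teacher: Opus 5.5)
Your part (ii) is correct and matches the paper's argument. The caveat about the original instance passing some test is unnecessary, since item (2) of Lemma~\ref{lem:inductionnumbersExist} has no such hypothesis.

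\textbf{The gap is in (i).} The claim that $\{a,b\}\not\subseteq L^a_{\maxArity}(z)$ for every $z\in Z$ is false, and the sketched justification cannot work. Assumption (3) of Lemma~\ref{lem:replaceB} only concerns walks with both endpoints in $X$, and $z$ need not lie in $X$. Goodness never fires on $\extremes(\zzreal{\omega}{(\A,L)})$ for separating $\omega$, because its premise requires some $c$ with $(a,c),(b,c)$ in the relation, which is exactly what separation forbids.

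\emph{Counterexample.} Let $B=\{a,b,e\}$, and let $\B$ consist of all unary relations together with $R=\{(a,a),(a,b),(b,e)\}$.
\begin{itemize}
\item Let $m$ be the conservative operation that is the minority on two-element subsets and sends three distinct arguments to $e$. Let $\mu$ be the conservative operation that is the majority on two-element subsets and sends three distinct arguments to $a$. Both preserve $R$.
\item Hence $\B$ pp-defines no $O_{p,q}$ or $P_{p,q}$.
\item $(a,b)$ is good relative to every list: apply $m$ coordinatewise to $(a,c),(b,c),(b,d)$.
\end{itemize}
Now take $A=\{x,z\}$, $R^{\A}=\{(x,z)\}$, $L(x)=\{a,b\}$, $L(z)=\{a,b,e\}$, $X=\{x\}$, and $f\colon x\mapsto b,\ z\mapsto e$. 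Every hypothesis your argument relies on holds. The walk $x,(x,z),z$ is separating, so $z\in Z$. Yet $L^a_{\maxArity}(z)=\{a,b\}$.

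\textbf{What is true} (and what the paper proves) is pointwise strict shrinkage, witnessed by $f$. Let $z\in Z\setminus\{x\}$ and let $\omega$ be a separating walk from $x$ to $z$.
\begin{itemize}
\item Since $f$ is a solution, $f(z)\in L_{r_n}(z)$.
\item Suppose $f(z)\in L^a_{\maxArity}(z)$. The $\maxArity$-program derives $\last(\zzreal{\omega}{(\A,L^a)})$ on $z$, and every realization respecting $L^a$ starts at $a$. So $(a,f(z))\in\extremes(\zzreal{\omega}{(\A,L^a)})\subseteq\extremes(\zzreal{\omega}{(\A,L)})$. Together with $(b,f(z))$, this contradicts separation.
\item At $x$ itself, the list drops to $\{a\}$.
\end{itemize}

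The weight then decreases because of an inclusion-maximal set, not the fixed set $\{a,b\}$. Choose $z_0\in Z$ such that $L_{r_n}(z_0)$ is inclusion-maximal among $\{L_{r_n}(z)\mid z\in Z\}$ and contains $L_{r_n}(x)\supseteq\{a,b\}$. This set is counted in $\wt(L)$. It is contained in no $L^a_{\maxArity}(z')$ with $z'\in Z$, since $L^a_{\maxArity}(z')\subsetneq L_{r_n}(z')$ would contradict maximality. In the example above, the set that disappears is $\{a,b,e\}$.
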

\begin{proof}
Since $(\A,L)$ passes the $k_n$-test, by Lemma~\ref{lem:inductionnumbersExist}(3) $(\A,L_{r_n})$ passes the $k_{n-1}$-test. By assumption of Lemma~\ref{lem:replaceB} $a\in L_{r_n}(x)$. Hence, the $r_n$-test can not derive $B\setminus\{a\}$ on $x$. By Lemma~\ref{lem:inductionnumbersExist}(2), 
$(\A,L^a)$ passes the $s_n$-test.
By item (1) of Lemma~\ref{lem:inductionnumbersExist}
$(\expansion(\A),L^a_{\maxArity})$ passes the $k_{n-1}$-test.
Since $\trace_Z(\A,L^a_{\maxArity})$ is a substructure of $(\expansion(\A),L^a_{\maxArity})$ it also passes the $k_{n-1}$-test.

We will now show $L^a_{\maxArity}(z)\subsetneq L_{r_n}(z)$ for all $z\in Z$. First note that $L^a(z)\subseteq L(z)$ for all $z\in Z$. Since $r_n\geq\maxArity$ we conclude $L^a_{\maxArity}(z)\subseteq L_{r_n}(z)$ for all $z\in Z$.
We have $L^a_{\maxArity}(x)=\{a\}\subsetneq \{a,b\}\subseteq L_{r_n}(x)$. Let $z\in Z\setminus \{x\}$ . By definition of $Z$ there is a separating walk $\omega$ from $x$ to $z$. 
Define $c\coloneqq f(z)$. 
Then $(b,c)=(f(x),f(z))\in\extremes(\real{\omega}{(\A,L_{r_n})})$. 
Assume $c\in L^a_{\maxArity}(z)$. 
Since $\omega$ is a simple walk the $\maxArity$-test derives $\last(\zzreal{\omega}{(\A,L^a)})$ on $z$. Hence, $c\in\last(\zzreal{\omega}{(\A,L^a)})$. Because $L^a_{\maxArity}(x)=\{a\}$ we therefore have $(a,c)\in\extremes(\zzreal{\omega}{(\A,L^a)})$. Since $L^a$ and $L_{r_n}$ are both stricter lists than $L$ we have $(a,c),(b,c)\in\extremes(\zzreal{\omega}{(\A,L)})$, contradicting that $\omega$ is separating. Therefore, $c\notin L^a_{\maxArity}(z)$. 
See Figure~\ref{fig:separatingPotato}  for a depiction of the situation.
\begin{figure}
    \centering
    \separatingPotato
    \caption{A depiction of how a separating walk reduces the list of $z$.}
    \label{fig:separatingPotato}
\end{figure}
By definition of weight we obtain \[\wt(L^a_{\maxArity})\lneq\wt(L_{r_n})\leq\wt(L)\leq n.\]    
In conclusion, $\trace_Z(\A,L^a_{\maxArity})$ passes the $k_{n-1}$ and its list has weight $\leq n-1$. Therefore,  $\Phi_{n-1}$ implies that $\trace_Z(\A,L^a_{\maxArity})$ has a solution.
\end{proof}

Fix a solution $h\colon Z\to B$ of $\trace_Z(\A,L^a_{\maxArity})$ and define the map
\begin{align*}
    g\colon A&\to B\\
    y&\mapsto\begin{cases}
        h(y) &\text{if }y\in Z\\
        f(y) &\text{if }y\notin Z
    \end{cases}
\end{align*}
Before we can show that $g$ is the desired solution in Lemma~\ref{lem:replaceB} we need to show some more properties of the region $Z$.

\begin{lemma}\label{lem:separatingWalksStayInZ}
    Let $\omega$ be a simple walk from $x$ to $y\in A$ and $\omega'$ a walk from $y$ to $z\in Z$. If $\omega\omega'$ is separating, then $\omega$ is also separating. In particular, $y\in Z$.
\end{lemma}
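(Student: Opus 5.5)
We argue by contraposition: assuming $\omega$ is not separating, we show that $\omega\omega'$ is not separating either. The last claim then follows at once, since a separating $\omega$ from $x$ to $y$ witnesses $y\in Z$.

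So suppose $\omega$ is not separating, and fix $c\in B$ with $(a,c),(b,c)\in\extremes(\zzreal{\omega}{(\A,L)})$. Then there are zigzag expansions $\omega_1,\omega_2$ of $\omega$ and realizations respecting $L$ giving $(a,c)$ on $\omega_1$ and $(b,c)$ on $\omega_2$. We want an element $e$ with $(a,e),(b,e)\in\extremes(\zzreal{\omega\omega'}{(\A,L)})$. Concatenating a zigzag expansion of $\omega'$ to $\omega_1$ and to $\omega_2$ gives zigzag expansions of $\omega\omega'$. So it suffices to extend both realizations along a common zigzag expansion of $\omega'$ from the common value $c$ at $y$ to a common value $e$ at the endpoint.

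The first step is to choose a starting zigzag expansion of $\omega'$. Since $z\in Z$, there is a separating walk $\sigma$ from $x$ to $z$. By Observation~\ref{obs:separatingWalks}(2), there are $c',d'$ with $(a,c'),(b,d')\in\extremes(\zzreal{\sigma}{(\A,L)})$.

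The cleanest route, I think, is the zigzag trick. Consider the walk $\omega\omega'$ followed by $\omega'$ reversed and then $\omega'$ again. This is a zigzag expansion of $\omega\omega'$, so it only produces tuples that $\zzreal{\omega\omega'}{(\A,L)}$ already contains. Since $c\in L(y)$ and every element of $L$ that survives the $r_n$-test extends along $\omega'$, we obtain some $e$ with $(c,e)\in\extremes(\zzreal{\omega'}{(\A,L)})$. The realizations of $\omega_1\omega'$ and $\omega_2\omega'$ can then both use the same realization of $\omega'$ starting at $c$. Gluing at $y$ is legitimate because the shared vertex of $\struc$ is only $y$. This yields $(a,e),(b,e)\in\extremes(\zzreal{\omega\omega'}{(\A,L)})$, so $\omega\omega'$ is not separating, contradicting the hypothesis.

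The main obstacle is guaranteeing that the value $c$ at $y$ extends along $\omega'$ at all. This is a nonemptiness statement: $\last(\zzreal{\omega})\ni c$ must be compatible with $\omega'$. I would derive it from the $k_n$-test passing, via Lemma~\ref{lem:symDLWalkCharakterization} and the fact that $c\in\last(\zzreal{\omega}{(\A,L)})$ is forced by the derivations. If that fails directly, I would instead take $e$ from the fixed solution $f$ restricted along $\omega'$, after replacing $c$ using good-pair arguments.
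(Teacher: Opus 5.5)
\textbf{Gap.} The step you flag as the main obstacle fails. You need the common value $c$ at $y$ to extend along some zigzag expansion of $\omega'$, and nothing supports this.

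You know only that $c\in L(y)$ and $c\in\last(\zzreal{\omega}{(\A,L)})$. The test gives $L_{r_n}(y)\subseteq\last(\zzreal{\omega}{(\A,L)})$, which is the wrong inclusion, so $c$ need not survive the $r_n$-test at $y$. The claim can in fact be false. Suppose $\omega'$ is a single step through a constraint $S(y,z)$ with no tuple having first entry $c$. Every zigzag expansion of $\omega'$ moves only between $y$ and $z$, so none can start at $y$ with value $c$.

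The tools you mention do not repair this. Lemma~\ref{lem:symDLWalkCharakterization} only gives nonemptiness of zigzag realizations, not extendability of a prescribed value at $y$. The walk $\omega\omega'$ followed by $\omega'$ reversed and $\omega'$ again never re-enters $\omega$, so it changes nothing. The separating walk $\sigma$ to $z$ is never used, and the fallback via $f$ and good-pair arguments is not an argument.

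\textbf{Missing idea.} Do not push $c$ forward along $\omega'$. Instead route back through $x$, where the value $a$ is guaranteed to extend.

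\begin{enumerate}
\item Since $a\in L_{r_n}(x)$ and $\omega\omega'$ is a simple walk from $x$, the reasoning of Observation~\ref{obs:separatingWalks}(2) applies. That reasoning uses only simplicity of the walk, not separation. It gives $c'$ with $(a,c')\in\extremes(\zzreal{\omega\omega'}{(\A,L)})$.
\item Concatenate three pieces: $\omega_2$, carrying $b$ at $x$ to $c$ at $y$; the reverse of $\omega_1$, carrying $c$ at $y$ back to $a$ at $x$; and the zigzag expansion of $\omega\omega'$ carrying $a$ at $x$ to $c'$ at $z$.
\item The result is again a zigzag expansion of $\omega\omega'$: it starts at index $1$, ends at the last index, and moves by $\pm1$. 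The three realizations agree at the glued points ($c$ at $y$, $a$ at $x$), so they combine into one realization.
\end{enumerate}

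Hence both $(a,c')$ and $(b,c')$ lie in $\extremes(\zzreal{\omega\omega'}{(\A,L)})$, so $\omega\omega'$ is not separating. This is exactly the paper's proof, and with it your contrapositive set-up goes through.
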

\begin{proof}
    We show the contraposition. Assume  $\omega$ is not separating. By definition there is a $c\in B$ with $(a,c),(b,c)\in\extremes(\zzreal{\omega}{(\A,L)})$. By Observation~\ref{obs:separatingWalks} there is a $c'\in L(z)$ with $(a,c')\in\extremes(\zzreal{\omega\omega'}{(\A,L)})$. Therefore, there is a zigzag-realization of $\omega\omega'$ going from $b$ over $c$ and $a$ to $c'$ and $(b,c')\in\extremes(\zzreal{\omega\omega'}{(\A,L)})$. Hence, $\omega\omega'$ is not separating. 
\end{proof}

\begin{lemma}\label{lem:withinZBijectionsArePPDefinable}
    Let $z\in Z$. Then $\{(a,h(z)),(b,f(z))\}$ is pp-definable in $\B$.
\end{lemma}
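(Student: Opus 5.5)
The plan is to exhibit the relation as the restriction to $\{a,b\}\times\{h(z),f(z)\}$ of the extremes of a single realization relation. Such a relation is pp-definable because $\B$ is conservative and $\real{\omega'}{(\A,L)}$ is pp-definable for every walk $\omega'$. The case $z=x$ is immediate: $h(x)\in L^a_{\maxArity}(x)=\{a\}$ and $f(x)=b$, so the set is $\{(a,a),(b,b)\}$. This is the equality relation intersected with the unary relation $\{a,b\}$, hence pp-definable. So assume $z\in Z\setminus\{x\}$, and fix a separating walk $\omega$ from $x$ to $z$.

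First I find a zigzag realization of $\omega$ that starts at $a$ and ends at $h(z)$. As in the proof of the preceding lemma, $\omega$ is simple, so the $\maxArity$-test derives $\last(\zzreal{\omega}{(\A,L^a)})$ on $z$. Since $h$ is a solution of $\trace_Z(\A,L^a_{\maxArity})$, we have $h(z)\in L^a_{\maxArity}(z)\subseteq\last(\zzreal{\omega}{(\A,L^a)})$. Because $L^a(x)=\{a\}$, this yields a zigzag expansion $\omega'$ of $\omega$ with $(a,h(z))\in\extremes(\real{\omega'}{(\A,L^a)})\subseteq\extremes(\real{\omega'}{(\A,L)})$.

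Next I observe that the global solution $f$ realizes every walk, and in particular every zigzag expansion. Composing $f$ with the canonical map $\struc^{\A}(\omega')\to\A$ gives a realization of $\omega'$ respecting $L$. Its extremes are $(f(x),f(z))=(b,f(z))$. Hence both $(a,h(z))$ and $(b,f(z))$ lie in the same pp-definable relation $E\coloneqq\extremes(\real{\omega'}{(\A,L)})$.

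Finally I set $R\coloneqq E\cap\left(\{a,b\}\times\{h(z),f(z)\}\right)$, which is pp-definable by conservativity, and show it has no further tuples. Since $E\subseteq\extremes(\zzreal{\omega}{(\A,L)})$ and $\omega$ is separating, no $c$ satisfies $(a,c),(b,c)\in E$. This gives $h(z)\neq f(z)$. It also rules out $(a,f(z))\in E$, since together with $(b,f(z))$ it would give a common $c=f(z)$. In the same way $(b,h(z))\notin E$. Therefore $R=\{(a,h(z)),(b,f(z))\}$.

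The only delicate step is the first one: one must check that a single zigzag expansion $\omega'$ carries both tuples. The zigzag union itself is not pp-definable, so the argument has to pass to $\real{\omega'}{(\A,L)}$. This works because $f$, being a global solution, realizes every zigzag expansion, so the choice of $\omega'$ is dictated by $h$ alone.
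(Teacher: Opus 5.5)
Your proof is correct, but it obtains the tuple $(a,h(z))$ by a different route from the paper.

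\textbf{The paper's route.} The paper works with $\extremes(\real{\omega}{(\A,L)})$ for the separating walk $\omega$ itself. It invokes Lemma~\ref{lem:separatingWalksStayInZ} to get $y_i\in Z$ for all $i$. It then asserts that applying $h$ along $\omega$ gives a realization from $a$ to $h(z)$, just as $f$ gives one from $b$ to $f(z)$.

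\textbf{Your route.} You never realize $\omega$ by $h$. Instead, $\omega$ has width at most $\maxArity$, so $h(z)\in L^a_{\maxArity}(z)\subseteq\last(\zzreal{\omega}{(\A,L^a)})$. This produces a zigzag expansion $\omega'$ that carries $(a,h(z))$. The global solution $f$ realizes $\omega'$ as well, so both tuples lie in the single pp-definable relation $\extremes(\real{\omega'}{(\A,L)})$. Excluding $(a,f(z))$ and $(b,h(z))$ by separation is then identical in both proofs.

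\textbf{What your route buys.} It does not need Lemma~\ref{lem:separatingWalksStayInZ}. More importantly, it avoids a step the paper passes over quickly. $h$ is only a solution of $\trace_Z(\A,L^a_{\maxArity})$: it is defined on $Z$ and respects only the induced substructure of $\expansion(\A)$ there. A realization of $\omega$, by contrast, must assign list-respecting values to the elements of the bags $x_i$ other than the $y_i$, which need not lie in $Z$. It must also satisfy all constraints of $\A|_{x_i}$ simultaneously.

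Your argument reuses exactly the mechanism from the proof that $\trace_Z(\A,L^a_{\maxArity})$ is solvable. The only price is that the defining walk $\omega'$ depends on $h$, which is harmless since only pp-definability is claimed.
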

\begin{proof}
    Let $\omega=y_1,x_2,y_2,\dots,y_{m-1},x_m,y_m$ be a separating walk from $x$ to $z$. The relation $R\coloneqq \extremes(\real{\omega}{(\A,L)})\cap(\{a,b\}\times\{g(z),f(z)\})$ is pp-definable. 
    Note that, by Lemma~\ref{lem:separatingWalksStayInZ}, $y_i\in Z$ for all $i$. Hence, $h(y_i)$ is defined and applying $f$ and $h$ to $\omega$ yields $(a,h(z))\in R$ and $(b,f(z))\in R$, respectively. 
    If $(a,f(z))\in R$ or $(b,h(z))\in R$, then $\omega$ would not be separating. Hence, $R=\{(a,h(z)),(b,f(z))\}$, as desired.
\end{proof}

Let $\omega$ be a simple walk from $x$ to $y\in A$, $c,d\in L(y)$, and let $a=s_1,\dots,s_m=c$  and $b=t_1,\dots,t_m=d$ be two realizations of $\omega$. These two realizations \emph{avoid} each other if $\{(s_i,t_{i+1}),(t_i,s_{i+1})\}\cap\pi_{i,i+1}(\real{\omega}{(\A,L)})=\emptyset$ for all $i\in[m-1]$.
\begin{lemma}\label{lem:avoidingWalks}
    Let $\omega$ be a simple walk from $x$ to $y\in A$ and $c,d\in L(y)$. If there are realizations $a=s_1,\dots,s_m=c$  and $b=t_1,\dots,t_m=d$ of $\omega$ that avoid each other
    , then $(a,d)\notin\extremes(\real{\omega}{(\A,L)})$. 
\end{lemma}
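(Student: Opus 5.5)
We argue by induction on the length $m$ of $\omega$ and show a stronger statement: for every $i\in[m]$ we have $(a,t_i)\notin\extremes(\real{\omega_{\le i}}{(\A,L)})$, where $\omega_{\le i}$ denotes the initial segment of $\omega$ ending at $y_i$. We also carry along that $(b,s_i)\notin\extremes(\real{\omega_{\le i}}{(\A,L)})$. For $i=1$ the relation $\extremes$ of the trivial walk is the diagonal on $L(x)$, so $(a,b)$ and $(b,a)$ are not in it because $a\neq b$. The final claim is then the case $i=m$, since $t_m=d$.

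For the induction step from $i$ to $i+1$, consider the pp-definable binary relations $E_i\coloneqq\extremes(\real{\omega_{\le i}}{(\A,L)})\subseteq B^2$ and $F_i\coloneqq\pi_{i,i+1}(\real{\omega}{(\A,L)})$. We have
\[E_{i+1}=\{(u,w)\mid \exists v\colon (u,v)\in E_i,\ (v,w)\in\pi_{i,i+1}(\real{\omega_{\le i+1}}{(\A,L)})\}.\]
The projection appearing here contains $F_i$, and in the simple case it equals the relation of $\A|_{x_{i+1}}$ with respect to $L$. We restrict everything to the two values at each position: set
\[E_i'\coloneqq E_i\cap(\{a,b\}\times\{s_i,t_i\}).\]
This relation is pp-definable because $\B$ is conservative. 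The two realizations give $(a,s_i),(b,t_i)\in E_i'$, and the induction hypothesis excludes $(a,t_i)$ and $(b,s_i)$. Hence $E_i'$ is the graph of the bijection $a\mapsto s_i$, $b\mapsto t_i$.

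Now suppose $(a,t_{i+1})\in E_{i+1}$, witnessed by some $v$ with $(a,v)\in E_i$ and $(v,t_{i+1})$ in the step relation. We want a contradiction with avoidance, namely $(s_i,t_{i+1})\notin F_i$ and $(t_i,s_{i+1})\notin F_i$. Consider the pp-definable relation
\[R\coloneqq\{(u,w)\mid \exists v\colon (u,v)\in E_i,\ (v,w)\in F_i\}\cap(\{a,b\}\times B).\]
It contains $(a,s_{i+1})$, $(b,t_{i+1})$ and, by assumption, $(a,t_{i+1})$. The aim is to force $(b,s_{i+1})\in R$ or to get $(s_i,t_{i+1})\in F_i$. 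For this I would apply the good-pair property to the relation $\{(u,w): \exists v\,(u,v)\in E_i',\ (v,w)\in F_i\}$. This relation composes with the bijection $E_i'$, so up to renaming it is just $F_i$ restricted to $\{s_i,t_i\}\times B$. In that restricted relation, $(a,t_{i+1})$ would mean $(s_i,t_{i+1})\in F_i$, which avoidance excludes.

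The main obstacle is that the intermediate value $v$ need not lie in $\{s_i,t_i\}$. To handle this, I would invoke Lemma~\ref{lem:getBinaryWitness}, or more directly Lemma~\ref{lem:replacingBinTuples}, applied to the ternary relation $\{(u,v,w)\mid (u,v)\in E_i,(v,w)\in F_i\}$ with the good pair $(a,b)$. The goal is to replace the witness $v$ by one lying in $\pi(R_a)$, which we may take to be $s_i$, the value paired with $a$, using the hypothesis that neither $O_{a,b}$ nor $P_{a,b}$ is pp-definable. If this works, we get $(s_i,t_{i+1})\in F_i$, contradicting avoidance. The symmetric argument with the roles of $a$ and $b$ and of $s$ and $t$ exchanged gives $(b,s_{i+1})\notin E_{i+1}$, which completes the induction. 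I expect getting the good-pair condition into exactly the shape needed here, including checking the list condition at position $i$, to be the delicate step. That check uses $c,d\in L(y)$ and conservativity.
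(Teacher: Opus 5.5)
\textbf{There is a genuine gap: the induction step is not carried out, and the tools you propose for it do not work.} Your overall plan, an induction over prefixes with the two-sided invariant $(a,t_i)\notin E_i$ and $(b,s_i)\notin E_i$, can be made to work. But the step is the entire content of the lemma. Lemma~\ref{lem:replacingBinTuples} only converts a tuple $(b,\bar u)$ into $(a,\bar u)$ when every entry of $\bar u$ lies in the corresponding projection of $R_a$. It never moves a middle coordinate. For your ternary relation the natural input $(b,t_i,t_{i+1})$ violates its hypothesis, precisely because $(a,t_i)\notin E_i$. Lemma~\ref{lem:getBinaryWitness} only transfers a non-membership from $\pi_{I,J}$ to some $\pi_{i,j}$, which is also not what you need. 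Moreover, ``replacing $v$ by $s_i$'' would give $(s_i,t_{i+1})\in F_i$, which avoidance forbids. So what you need is an argument that no such witness $v$ exists, not a replacement. Second, the half $(b,s_{i+1})\notin E_{i+1}$ does not follow from ``the symmetric argument''. The good-pair condition is one-directional: from $(a,c),(b,d),(b,c)\in R$ you infer $(a,d)\in R$, so $a$ and $b$ cannot simply be exchanged. Two minor points: the list condition you worry about is harmless, since at $y_i$ all values lie in $L(y_i)$ and $c,d\in L(y)$ plays no role. And $P_{a,b}$ is not needed for this lemma.

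\textbf{The missing idea is a there-and-back relation that produces $O_{a,b}$.} Suppose $(a,v)\in E_i$ and $(v,t_{i+1})\in F_i$. Let $R(u,u')$ say that $u,u'\in\{a,b\}$ and there are $w\in\{s_i,v,t_i\}$, $z\in\{s_{i+1},t_{i+1}\}$, $w'\in\{s_i,t_i\}$ with $(u,w)\in E_i$, $(w,z)\in F_i$, $(w',z)\in F_i$ and $(u',w')\in E_i$. This is pp-definable by conservativity.
\begin{itemize}
\item $R$ contains $(a,a)$ and $(b,b)$, and it contains $(a,b)$ via $v,t_{i+1},t_i$.
\item Starting from $b$, the value $w=s_i$ is excluded by the induction hypothesis.
\item The value $w=v$ is also excluded from $b$: otherwise $(a,v),(b,t_i),(b,v)$ lie in the pp-definable relation $\{(u,w)\in E_i\mid (w,t_{i+1})\in F_i\}$, and goodness gives $(a,t_i)\in E_i$.
\item So $w=t_i$. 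Avoidance then forces $z=t_{i+1}$ and $w'=t_i$, and the hypothesis forces $u'=b$. Hence $R=O_{a,b}$, a contradiction.
\end{itemize}
For the $b$-half, take a witness $v'$ with $(b,v')\in E_i$ and $(v',s_{i+1})\in F_i$, and use the same construction with $v'$ in place of $v$. It yields $\{(a,a),(b,a),(b,b)\}$, the converse of $O_{a,b}$. Here the only dangerous path $a\to v'\to t_{i+1}$ is ruled out not by goodness but by the already proved $a$-half at $i+1$. So the two halves must be proved in that order.

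\textbf{Comparison with the paper.} The paper fixes a realization $r$ from $a$ to $d$. It takes the least $i$ with $(r_i,t_{i+1})$ or $(t_i,r_{i+1})$ in $\pi_{i,i+1}(\real{\omega}{(\A,L)})$. It then runs one there-and-back walk over positions $1,\dots,i+1$, with outward values in $\{s_j,r_j,t_j\}$ and return values in $\{s_j,t_j\}$. Goodness is used only to reduce the case of $(t_i,r_{i+1})$ to that of $(r_i,t_{i+1})$. Your invariant localises this argument to a single step, at the cost of carrying the extra statement $(b,s_i)\notin E_i$.
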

\begin{proof}
Let $\omega=y_1,x_2,y_2,\dots,y_{m-1},x_m,y_m$, $s\coloneqq s_1,\dots,s_m$, and $t\coloneqq t_1,\dots,t_m$. Assume there is a realization $a=r_1,\dots,r_m=d$ of $\omega$.
    Let $i\in[m-1]$ be minimal with $\{(r_i,t_{i+1})),(t_i,r_{i+1})\}\cap\pi_{i,i+1}(\real{\omega}{(\A,L)})\neq\emptyset$. 
    Assume $(r_i,t_{i+1})$ is contained in $\pi_{i,i+1}(\real{\omega}{(\A,L)})$, see Figure~\ref{fig:abcd} for a visualization. Define the walk \[\eta \coloneqq y_1,x_2,y_2,\dots,y_{i},x_{i+1},y_{i+1},x_{i+1},y_i,\dots,y_2,x_2,y_1.\] 
    \begin{figure}
        \centering
        \abcd
        \caption{Realizations of the walk $\omega$.}
        \label{fig:abcd}
    \end{figure}
    Consider the pp-definable relation \begin{align*}
        R\coloneqq \real{\eta}{(\A,L)}\cap(\{s_1,r_1,t_1\}\times\dots\times\{s_i,r_i,t_i\}\times\{s_{i+1},t_{i+1}\}\times\{s_i,t_i\}\times\dots\times\{s_1,t_1\})
    \end{align*} 
    Observe that 
    \begin{align*}
        &(a,s_2,\dots,s_i,s_{i+1},s_i,\dots,s_2,a)\in R,\\
        &(a,r_2,\dots,r_i,t_{i+1},t_i,\dots,t_2,b)\in R,\text{ and}\\
        &(b,t_2,\dots,t_i,t_{i+1},t_i,\dots,t_2,b)\in R.
        \intertext{Assume $(b,a)\in\extremes(R)$, then }
        &(b,t_2,\dots,t_i,t_{i+1},r_i,\dots,s_2,a)\in R.
    \end{align*}
    Therefore, $r_i\in\{s_i,t_i\}$. 
    If $r_i=s_i$, then $(s_i,t_{i+1})=(r_i,r_{i+1})\in\pi_{i,i+1}(\real{\omega}{(\A,L)})$, contradicting that $s$ and $t$ are avoiding.
    If $r_i\neq s_i$, then $i\geq2$ and $r_i=t_i$. Hence, $(r_{i-1},r_i)=(r_{i-1},t_i)$, contradicting the minimality of $i$.
    Therefore the pp-definable relation $\extremes(R)$ is equal to $\{(a,a),(a,b),(b,b)\}=O_{a,b}$, a contradiction.

    In the second case $(f(y_i'),r_{i+1})$ is contained in $\pi_{i,i+1}(\real{\omega'}{(\A,L)})$, then the pp-definable relation \[\pi_{1,i+1}(\real{\omega'}{(\A,L)}\cap(\{r_1,f(y_1)\}\times\dots\times \{r_m,f(y_m')\}))\] contains $(b,r_{i+1}),(b,f(y_{i+1}')),(a,r_{i+1})$. Since $(a,b)$ is a good pair relative to $L$, the relation must also contain $(a,f(y_{i+1}'))$. Because $i$ is minimal we conclude $(r_i,f(y_{i+1}'))\in \pi_{i,i+1}(\real{\omega'}{(\A,L)})$ and we are back in the first case.  
\end{proof}

\begin{lemma}\label{lem:gIsSolution}
    The map $g$ is a solution of $(\A,L)$.
\end{lemma}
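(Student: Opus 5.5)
We must check three things about $g$: it respects $L$, it preserves every relation of $\A$ (equivalently of $\expansion(\A)$, since we assumed $\B=\expansion(\B)$), and, for Lemma~\ref{lem:replaceB}, $g(x)=a$. The value at $x$ is $h(x)\in L^a_{\maxArity}(x)=\{a\}$. Lists are respected because $h(z)\in L^a_{\maxArity}(z)\subseteq L(z)$ for $z\in Z$ and $f$ respects $L$ elsewhere.

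For the homomorphism property, fix a relation symbol $R$ of arity $m\le\maxArity$ and a tuple $u\in R^{\A}$. Let $I=\{i\mid u_i\in Z\}$ and $J=[m]\setminus I$. If $I=\emptyset$, then $g(u)=f(u)\in R^{\B}$. If $J=\emptyset$, then $u$ is a tuple of $\expansion(\A)|_Z$ and $h$ is a solution of the trace, so $g(u)=h(u)\in R^{\B}$. In the mixed case the trace still contains $u_I\in\pi_I(R)^{\A}$, so $h(u_I)\in\pi_I(R^{\B})$. Since $f(u)\in R^{\B}$, we also have $f(u_J)\in\pi_J(R^{\B})$. What remains is to glue these two pieces, i.e.\ to show $(h(u_I),f(u_J))\in\pi_{I,J}(R^{\B})$.

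To glue them I apply Lemma~\ref{lem:getBinaryWitness}. Let $R'$ be the pp-definable relation $R^{\B}\cap\prod_i L_{\maxArity}(u_i)$, or $R^{\B}$ intersected with the unary relations $\{h(u_i),f(u_i)\}$ on $I$ and suitable lists on $J$. Take $s$ to be a tuple of $R'$ extending $h(u_I)$ and $t\coloneqq f(u)$. The hypothesis that $\{(a,s_i),(b,t_i)\}$ is pp-definable for $i\in I$ is exactly Lemma~\ref{lem:withinZBijectionsArePPDefinable}. The goodness of $(a,b)$ relative to $i\mapsto\pi_i(R')$ follows from goodness relative to $L$, since these lists are sublists of lists occurring in $L$. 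If gluing failed, Lemma~\ref{lem:getBinaryWitness} would give $i\in I$ and $j\in J$ with $(h(u_i),f(u_j))\notin\pi_{i,j}(R')$ and, moreover, no common neighbour $r_j$ of $h(u_i)$ and $f(u_i)$.

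This is where I derive the contradiction. Take a separating walk $\omega$ from $x$ to $u_i$, and extend it by the step $u_i,u,u_j$ to a simple walk $\omega'$ from $x$ to $u_j\notin Z$. Then $\omega'$ is not separating, so some $c$ satisfies $(a,c),(b,c)\in\extremes(\zzreal{\omega'}{(\A,L)})$. Lemma~\ref{lem:separatingWalksStayInZ} should let me assume the intermediate vertices stay in $Z$. Realizing $\omega$ by $h$ and by $f$ gives two realizations ending in $h(u_i)$ and $f(u_i)$, and these avoid each other: this is essentially Lemma~\ref{lem:withinZBijectionsArePPDefinable} applied stepwise, together with separation along every prefix. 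Extending them by $f(u_j)$ in the last step, the final part of Lemma~\ref{lem:getBinaryWitness} (no common neighbour at $u_j$) should give that the extended realizations still avoid each other. Lemma~\ref{lem:avoidingWalks} then excludes $(a,f(u_j))$ and, more generally, forces $\omega'$ to be separating, contradicting $u_j\notin Z$.

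The main obstacle is this last step. It requires converting the ``no common neighbour'' conclusion into separation of $\omega'$, including handling the zigzag expansions rather than plain realizations. If Lemma~\ref{lem:avoidingWalks} only handles plain realizations, I would first reduce zigzag-realizations to plain ones, using that the backtracked segments can be folded out by the same $O_{a,b}$ and good-pair argument as in its proof.
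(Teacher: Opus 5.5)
Your handling of the lists and the two pure cases matches the paper. So does your use of Lemma~\ref{lem:getBinaryWitness}, with Lemma~\ref{lem:withinZBijectionsArePPDefinable} supplying the pp-definable bijections, and your idea of extending a separating walk $\omega$ to $u_i$ by the step $u_i,u,u_j$ and comparing the $h$- and $f$-realizations. The gap is the final step, which you flag yourself and leave open.

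Lemma~\ref{lem:avoidingWalks} only excludes one pair $(a,d)$, where $d$ is the endpoint of the $b$-realization, from $\extremes(\real{\eta}{(\A,L)})$, for the walk $\eta$ it is applied to. It says nothing about an arbitrary common value $c$, and nothing about zigzag expansions. So ``more generally, forces $\omega'$ to be separating'' does not follow. Your proposed repair, folding backtracks out of zigzag realizations, is not carried out. It would need a real argument, since $\extremes(\zzreal{\eta}{(\A,L)})$ can be strictly larger than $\extremes(\real{\eta}{(\A,L)})$. It is also unnecessary.

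A separate, minor point: with $R'=R^{\B}\cap\prod_i L_{\maxArity}(u_i)$, a tuple $s\in R'$ with $s_I=h(u_I)$ is not supplied by the trace, which only records $\pi_I(R)$. The paper applies Lemma~\ref{lem:getBinaryWitness} to $R^{\B}$ itself, with $s$ any tuple extending $h(u_I)$.

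Two steps are missing.

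\textbf{Reduce non-separation to a single pair.} Suppose $\omega'$ is not separating, so there is $c$ with $(a,c),(b,c)\in\extremes(\zzreal{\omega'}{(\A,L)})$. The map $f$ gives $(b,t_j)$ there as well, where $t_j=f(u_j)$. Concatenate the witnessing zigzag expansions: $a$ to $c$, then back from $c$ to $b$, then $b$ to $t_j$. The result is again a zigzag expansion, so $(a,t_j)\in\extremes(\zzreal{\omega'}{(\A,L)})$. It therefore suffices to exclude this one pair.

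\textbf{Apply Lemma~\ref{lem:avoidingWalks} to the zigzag expansion itself.} Fix a zigzag expansion $\omega''$ of $\omega'$ with a realization from $a$ to $t_j$. Then $\omega''$ is a simple walk from $x$ to $u_j$. The map $f$, and $h$ extended by $h(u_j)\coloneqq s_j$ (not $f(u_j)$), induce realizations of $\omega''$ from $b$ to $t_j$ and from $a$ to $s_j$. These avoid each other at every step:
\begin{itemize}
\item At a step between $u_i$ and $u_j$, in either direction, this is the no-common-neighbour conclusion of Lemma~\ref{lem:getBinaryWitness}, because $(s_i,s_j)$ and $(t_i,t_j)$ both lie in $\pi_{i,j}(R^{\B})$.
\item At a step inside $Z$, a crossing pair could be spliced into a zigzag realization of $\omega$: follow $f$ along $\omega$ up to that step, cross, then follow $h$ along $\omega$ to $u_i$, or the reverse. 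This gives a realization from $b$ to $s_i$ or from $a$ to $t_i$, contradicting that $\omega$ is separating.
\end{itemize}
Lemma~\ref{lem:avoidingWalks} applied to $\omega''$ then gives $(a,t_j)\notin\extremes(\real{\omega''}{(\A,L)})$, which is the required contradiction. Because the lemma is applied directly to the zigzag expansion, no conversion from zigzag to plain realizations is needed.
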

\begin{proof}
    First we verify that $g$ respects $L$. For $z\in Z$ we have $g(z)=h(z)\in L^a_{\maxArity}(z)\subseteq L(z)$ and for $y\in A\setminus Z$ we have $g(y)=f(y)\in L(y)$. 
    Now we check that $g$ preserves all relations. Let $y$ be a tuple in some $k$-ary relation $R^{\A}$. The goal is to show that $g(y)\in R^{\B}$. Define $I\coloneqq\{i\in[k]\mid y_i\in Z\}$, the set of indices inside $Z$, and $O\coloneqq[k]\setminus I$, the set of indices outside $Z$. 
    If $I=\emptyset$, then $g(y)=f(y)\in R^{\B}$. Analogously, if $O=\emptyset$, then $g(y)=h(y)\in R^{\expansion(\A)}=R^{\A}$.
    Consider the case $I\neq\emptyset$ and $O\neq\emptyset$. 
    Define $t=f(y)$. Since $g$ preserves the relations of  $\expansion(\A)$ we have $g(y_I)\in \pi_I(R^{\A})$. Hence, there exits a tuple $s\in R^{\A}$ with $s_I=g(y_I)$. 
    If $(g(y_I),g(y_O))=(h(y_I),f(y_O))=(s_I,t_O)\in \pi_{I,O}(R^{\A})$, then $g(y)\in R$ as desired.
    Assume $(s_I,t_O)\notin \pi_{I,O}(R^{\A})$. By Lemma~\ref{lem:withinZBijectionsArePPDefinable}, $\{(a,s_i),(b,t_i)\}$ is pp-definable for all $i\in I$. Hence, all assumptions of Lemma~\ref{lem:getBinaryWitness} are satisfied and we obtain $i\in I$ and $o\in O$ for which
    \begin{align*}
        \text{there is no $r_o\in B$ with $(s_i,r_o)\in\pi_{i,o}(R^{\B})$ and  $(t_i,r_o)\in\pi_{i,o}(R^{\B})$.}\tag{$\ast$}\label{tag:SiRo}
    \end{align*} 
    \begin{figure}
        \centering
        \walkExtension
        \caption{Extension of the walk $\omega$, a zigzag expansion of $\omega,y,y_o$ connecting $a$ and $t_o$ is highlighted in bold.}
        \label{fig:walkExtension}
    \end{figure}
    Consider the simple walk $\omega,y,y_o$.
    Since $(y_i,y_o)\in\pi_{i,o}(R^{\A})$, this is a simple walk from $x$ to $y_o$. Therefore, $(a,s_o),(b,t_o)\in\extremes(\zzreal{\omega,y,y_o}{(\A,L)})$.
    Because $y_o\notin Z$ we have that $\omega,y,y_o$ is not separating. Hence, there is an $r_o\in L(y_o)$ with $(a,r_o),(b,r_o)\in\extremes(\zzreal{\omega,y,y_o}{(\A,L)})$, see Figure~\ref{fig:walkExtension}. Since $(b,t_o)$ also belongs to this set \[\text{$\extremes(\zzreal{\omega,y,y_o}{(\A,L)})$ contains $(a,t_o)$.}\] Therefore, there exists a zigzag-expansion $\omega'=y_1',x_2',y_2',\dots,y_{m-1}',x_m',y_m'$ of $\omega,y,y_o$ with $(a,t_o)\in \extremes(\real{\omega'}{(\A,L)})$. Let $a=r_1,\dots,r_m=t_o$ be a realization of $\omega'$ that witnesses this containment. 
    Note that if we define $h(y_o)\coloneqq s_o$, then $b=f(y_1'),\dots,f(y_m')=t_o$ and $a=h(y_1'),\dots,h(y_m')=s_o$ are also realizations of $\omega'$. If 
    these two realizations avoid each other, then $(a,t_o)\in \extremes(\real{\omega'}{(\A,L)})$  yields a contradiction to Lemma~\ref{lem:avoidingWalks}. Proving that $(s_I,t_O)\in \pi_{I,O}(R^{\A})$ as desired.

    Let $j\in[m-1]$. If $\{y'_j,y_{j+1}'\}=\{y_i,y_o\}$, then \[\{(f(y_i'),h(y_{i+1}')),(h(y_i'),f(y_{i+1}'))\}\subset\{(s_i,t_{i+1}),(t_i,s_{i+1}),(t_{i+1},s_i),(s_{i+1},t_i)\}\] which has empty intersection with $\pi_{i,i+1}(\real{\omega'}{(\A,L)})$ by (\ref{tag:SiRo}).
    Now consider the case $\{y'_j,y_{j+1}'\}\neq\{y_i,y_o\}$. Since $y_o$ is the last element on the walk $\omega$ we have $y_o\notin\{y'_j,y_{j+1}'\}$. Therefore, $y'_j,y_{j+1}'\in Z$. If $\{(f(y_i'),h(y_{i+1}')),(h(y_i'),f(y_{i+1}'))\}\cap \pi_{i,i+1}(\real{\omega'}{(\A,L)})\neq\emptyset$, then $(a,t_i)$ or $(b,s_i)$ are contained in $\extremes(\zzreal{\omega}{(\A,L)})$ contradicting that $\omega$ is separating. Hence, the two realizations are avoiding. 
\end{proof}

\begin{lemma}\label{lem:gAvoidsB}
    The map $g$ satisfies $g(x)=a$ and $g(y)\neq b$ for all $y\in X$ with $f(y)\neq b$.
\end{lemma}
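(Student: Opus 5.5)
First, $g(x)=a$. Since $x\in Z$ (Observation~\ref{obs:separatingWalks}(1)), we have $g(x)=h(x)$, and $h$ is a solution of $\trace_Z(\A,L^a_{\maxArity})$. Hence $h(x)\in L^a_{\maxArity}(x)\subseteq L^a(x)=\{a\}$, so $g(x)=a$.

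For the second claim, fix $y\in X$ with $f(y)\neq b$. If $y\notin Z$, then $g(y)=f(y)\neq b$ and we are done. So assume $y\in Z$; then $g(y)=h(y)$, and we must show $h(y)\neq b$. Choose a separating walk $\omega$ from $x$ to $y$. It is simple by definition, and both endpoints lie in $X$. Assumption (3) of Lemma~\ref{lem:replaceB} therefore gives $(b,b)\in\extremes(\zzreal{\omega}{(\A,L)})$.

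Next, by Lemma~\ref{lem:separatingWalksStayInZ}, every intermediate $y_i$ of $\omega$ lies in $Z$. So $h$ restricted to $\omega$ is a realization of $\omega$ respecting $L$. This shows $(a,h(y))\in\extremes(\real{\omega}{(\A,L)})\subseteq\extremes(\zzreal{\omega}{(\A,L)})$.

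Now suppose, for contradiction, that $h(y)=b$. Then both $(a,b)$ and $(b,b)$ lie in $\extremes(\zzreal{\omega}{(\A,L)})$. Taking $c=b$ in the definition of separating, this contradicts that $\omega$ is separating. Hence $h(y)\neq b$.

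I expect no step to be a real obstacle. The only subtlety is that we use only the zigzag realization set from assumption (3) of Lemma~\ref{lem:replaceB}, and the separating definition is phrased in terms of that same set, so the two fit together directly. The hypothesis $f(y)\neq b$ is not even needed in the case $y\in Z$, and the argument still goes through.
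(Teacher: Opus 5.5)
Your case split, your use of assumption (3) of Lemma~\ref{lem:replaceB} to get $(b,b)$, and your final contradiction with $c=b$ match the paper's proof exactly. The one place you argue differently is how you get $(a,h(y))$ into $\extremes(\zzreal{\omega}{(\A,L)})$, and that step is not justified as written.

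You argue that because Lemma~\ref{lem:separatingWalksStayInZ} puts every junction point $y_i$ into $Z$, the map $h$ is a realization of $\omega$. But a realization is a homomorphism on all of $\struc^{\A}(\omega)$, i.e.\ on every induced piece $\A|_{x_i}$. The tuples $x_i$ of a simple walk may contain entries other than $y_{i-1},y_i$, and nothing puts those into $Z$. For instance, an entry $w$ with $|L(w)|=1$ is never reached by a separating walk, and $h$ is undefined there. Moreover, $h$ is only a solution of $\trace_Z(\A,L^a_{\maxArity})$. That instance links $h(y_{i-1})$ and $h(y_i)$ only through unary lists and projections of individual tuples of $\A$. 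So these values need not extend jointly to a homomorphism of $\A|_{x_i}$. The paper makes the same move in the proof of Lemma~\ref{lem:withinZBijectionsArePPDefinable}, but for the present lemma it is avoidable.

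The paper's argument uses $h$ only at the single point $y$:
\begin{itemize}
\item Since $h$ respects its list, $h(y)\in L^a_{\maxArity}(y)$.
\item Since $\omega$ is simple, it has width at most $\maxArity$. So by Lemma~3 of \cite{DalmauLICS15} (quoted in the preliminaries), the $\maxArity$-test on $(\A,L^a)$ derives $\last(\zzreal{\omega}{(\A,L^a)})$ on $y$. Hence $L^a_{\maxArity}(y)\subseteq\last(\zzreal{\omega}{(\A,L^a)})$.
\item Since $L^a(x)=\{a\}$, every zigzag realization starts with $a$.
\end{itemize}
Thus $h(y)=b$ gives $(a,b)\in\extremes(\zzreal{\omega}{(\A,L^a)})\subseteq\extremes(\zzreal{\omega}{(\A,L)})$, which is all you need. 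The zigzag set suffices; you never need $(a,h(y))$ in the plain $\extremes(\real{\omega}{(\A,L)})$. This is the same reasoning used in the proof that $\trace_Z(\A,L^a_{\maxArity})$ has a solution. Substituting it for your realization step completes your proof.
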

\begin{proof}
    Since $x\in Z$ we have $g(x)=h(x)=a$. Let $y\in X$ with $f(y)\neq b$. If $y \notin Z$, then $g(y)=f(y)\neq b$. Consider the case $y\in Z$. By definition of $Z$ there exists a separating walk $\omega$ from $x$ to $y$. Since $x,y\in X$ the assumptions of Lemma~\ref{lem:replaceB} imply $(b,b)\in\extremes(\zzreal{\omega}{(\A,L)})$. Assume $g(y)=b$. Then $b=g(y)=h(y)\in L^a_{\maxArity}(y)$. Hence, $(a,b)\in\extremes(\zzreal{\omega}{(\A,L^a_{\maxArity})})$. 
    This contradicts that $\omega$ is separating. Hence, $g(y)\neq b$ as desired.
\end{proof}
Combining Lemmata~\ref{lem:gIsSolution} and~\ref{lem:gAvoidsB} proves Lemma~\ref{lem:replaceB}.

\section{Datalog Simulations}\label{sec:DatalogSimulations}
In this section we give some technical results on how $k$-programs can simulate other $k'$-programs.
\begin{lemma}\label{lem:DatalogSimulateSubProgram}
Let $(\A,L)$ be an instance.
If $(\A,L)$ passes the $(k+r)$-test, then $(\A,L_{r})$ passes the $k$-test.
\end{lemma}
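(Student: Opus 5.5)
We argue by contraposition and simulate a run of the $k$-program on $(\A,L_r)$ by a run of the $(k+r)$-program on $(\A,L)$. Suppose the $k$-program derives the goal predicate on $(\A,L_r)$. By Lemma~\ref{lem:symDLWalkCharakterization} there is then a walk $\omega=y_1,x_2,y_2,\dots,x_m,y_m$ of width at most $k$ with $\zzreal{\omega}{(\A,L_r)}=\emptyset$. We want to convert this into a walk $\omega^\ast$ of width at most $k+r$ with $\zzreal{\omega^\ast}{(\A,L)}=\emptyset$, which gives the contradiction by the same lemma.

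The key fact is that each list $L_r(z)$ is cut out from $L(z)$ by walks. For each element $z$ and each $c\in L(z)\setminus L_r(z)$, the $r$-program derives on $z$ a unary IDB avoiding $c$. Unwinding that derivation by Lemma~\ref{lem:symDLWalkCharakterization} and Lemma~3 of \cite{DalmauLICS15} yields a walk $\eta_{z,c}$ of width at most $r$ ending at $z$. We may choose it so that $c\notin\last(\zzreal{\eta_{z,c}}{(\A,L)})$, after reversing it so that it starts at $z$, or equivalently starts and ends at $z$ using symmetry. Concatenating finitely many such walks, one per removed value, gives a walk $\eta_z$ from $z$ to $z$ of width at most $r$. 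Every zigzag realization of $\eta_z$ starting in $L(z)$ that returns to $z$ must use a value in $L_r(z)$.

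Next build $\omega^\ast$ by going along $\omega$ and, at each step $x_{i}$, carrying the at most $k$ elements of $x_i$ as fixed context while performing the loops $\eta_z$ for the elements $z$ of $x_i$ one at a time. Concretely, each bag of a loop is replaced by the union of that bag and the current tuple $x_i$, or the relevant $y_i$. Its width is then at most $k+r$, which is exactly why the budget $k+r$ appears. These enlarged bags still satisfy the walk condition, since consecutive bags share the carried context together with the interface of the loop.

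Finally we check emptiness. Any zigzag realization of $\omega^\ast$ respecting $L$ restricts on the $\omega$-part to a zigzag realization of $\omega$. The inserted loops force every value assigned to an element of a bag $x_i$ to lie in $L_r$. So a realization of a zigzag expansion of $\omega^\ast$ respecting $L$ would yield one of a zigzag expansion of $\omega$ respecting $L_r$, which does not exist.

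The main obstacle is the interaction between zigzag expansions and the inserted loops. A zigzag expansion of $\omega^\ast$ may enter a loop partially and turn back, or traverse $\omega$-steps repeatedly with different loop segments in between. One must then argue that the value at the element where the $\omega$-part resumes is still constrained to $L_r$. I would first try to handle this by placing the loop at every $y_i$ and by keeping the elements of $x_i$ fixed (in the bag) throughout each loop. Any partial traversal of a loop then returns to the same bag with values fixed, and those values lie in $\last(\zzreal{\eta_z}{(\A,L)})\subseteq L_r(z)$. The alternative is a direct Datalog-level simulation: the $(k+r)$-program guesses an IDB in the $k$-program's derivation together with the unary IDBs from the $r$-program's derivation, one element at a time.
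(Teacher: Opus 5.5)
Your route differs from the paper's. The paper never uses walks here. It shows, by induction on derivation length, that whatever the $k$-program derives on $(\A,L_r)$, the $(k+r)$-program derives on $(\A,L)$. In the last rule, each list atom for $L_r(s(y))$ is replaced by a product IDB $Q\times L_r(s(y))$. That product IDB is obtained by running the $r$-program's derivations of unary IDBs $U_1,\dots,U_p$ with $U_1\cap\dots\cap U_p=L_r(s(y))$, carrying $Q(z)$ along as an extra factor of every IDB. This is your closing ``alternative''. Its advantage is that the reversals of the new product rules are obviously valid, so symmetry costs nothing. Your walk-level version can be made to work, but as written its central step is missing and some of its claims are wrong.

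First, neither cited lemma gives you $\eta_{z,c}$. Lemma~3 of \cite{DalmauLICS15} goes from walks to derivations. Lemma~\ref{lem:symDLWalkCharakterization} only concerns the goal, so applied to the list with $z\mapsto\{c\}$ it yields a failing walk that need not end at $z$. You need the converse of Lemma~3. Prove it by unwinding the linear derivation of $U$ on $z$ into a walk (bags are images of rule bodies, interfaces are derived tuples), then use the validity of each rule and of its reversal.

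Second, your claim about $\eta_z$ is false, and enlarging only the bags by $x_i$ does not fix the context. $\struc$ identifies only interface tuples. Loops concatenated through $(z)$ therefore give a new copy of $z$ at each junction, each constrained by at most two loops. With three or more removed values, no copy need lie in $L_r(z)$. For the same reason, the copies of $x_i$ in different enlarged bags are unrelated, so $y_{i-1}$ and $y_i$ are no longer tied by a single image of $\A|_{x_i}$. The fix has two parts:
\begin{enumerate}
\item every interface tuple of the segment inserted for $x_i$ must contain all of $x_i$;
\item that segment must contain derivation walks for every element of $x_i$, not only for those lying in some $y_i$.
\end{enumerate}

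Third, once this is repaired, the zigzag step works for a different reason than the one you give. A maximal stay of a zigzag expansion of $\omega^\ast$ inside the segment for $x_i$ uses a single copy of $x_i$.
\begin{itemize}
\item If the stay leaves on the side it entered, the copy of $y_{i-1}$ (or $y_i$) is unchanged. So the stay can be cut out, even though nothing is forced inside it; values in $\last(\ldots)$ play no role here.
\item If the stay crosses the segment, it crosses every inserted derivation walk. Start at the step through that walk's first bag, whose rule has no IDB in its body. Propagate the IDBs along the crossing, forward or backward in time as needed, using a rule or its reversal at each step. This puts the context copy of each $z\in x_i$ into every derived $U$, hence into $L_r(z)$.
\end{itemize}
The crossing stays then form a zigzag expansion of $\omega$ with a realization respecting $L_r$, contradicting $\zzreal{\omega}{(\A,L_r)}=\emptyset$.
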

\begin{proof}
    We show that whenever the $k$-program can derive the $P$ on the tuple $t$ on $(\A,L_r)$ then the $(k+r)$-program can derive $P$ on $t$ on $(\A,L)$. The proof is by induction on the length of the derivation. Let $P(x) \; {:}{-} \; \phi$ be the last rule used in the derivation of the $k$-program and let $s$ be the satisfying assignment for the application of the rule. 
    We construct new rules. 
    If there is a variable $y$ such that $\phi$ contains the (EDB) conjunct $(L(s(y)))(y)$, then 
    \begin{itemize}
        \item if there is no IDB in $\phi$, then replace it by the (IDB) conjunct $(L(s(y)))(y)$
        \item if there is an (IDB) conjunct $Q(z)$ in $\phi$, then remove $(L(s(y)))(y)$ and replace $Q(z)$ by $(Q\times L(s(y)))(z,y)$.
    \end{itemize}
    The new rule is a rule of the $k$-program that can be applied with the satisfying assignment $s$ if the $(k+r)$-program can derive $Q\times L(x)$ on $(s(z),s(y))$. By induction the $(k+r)$-program can derive $Q$ on $s(z)$. 
    By definition there are unary relations with $L(s(y))=U_1\cap\dots\cap U_n$ such that the $r$-program can derive $U_i$ on $s(y)$ on $(\A,L)$ for all $i$.
    Let $R_1,\dots,R_m$ be a derivation of $U_1$ on $s(y)$ on $(\A,L)$. We construct a derivation $R_1',\dots,R_m'$ by replacing all (IDB) conjuncts $Q'(z')$ with $(Q\times Q')(z,z')$ and  adding the (IDB) conjunct $Q(z)$ to the rule $R_1$. Note that this construction adds arity of $Q$, so at most $k$, variables to the bodies. Hence, we obtain rules of the $(r+k)$-program that can after the derivation of $Q$ on $s(z)$ derive $Q\times U_1$ on $(s(z),s(y))$. Repeating this construction for the derivations of the other $U_i$ (using $(Q\times(U_1\cap\dots\cap U_{i-1})\times Q')(z,y,z')$ instead of $(Q\times Q')(z,z')$) the $(r+k)$-program can derive $Q\times (U_1\cap\dots\cap U_i)$ on $(s(z),s(y))$. Hence, it can derive $Q\times L(s(y))$ on $(s(z),s(y))$. 
    We can repeat this construction for every  variable $y'$ such that $\phi$ contains the (EDB) conjunct $(L(s(y')))(y')$ to get a derivation of the $(k+r)$-program that derives $P$ on $s(x)$.
\end{proof}

\begin{lemma}\label{lem:DatalogSimulateExpansion}
Let $(\A,L)$ be an instance. If $(\A,L)$ passes the $(k+\maxArity)$-test, then $(\expansion(\A),L)$ passes the $k$-test.
\end{lemma}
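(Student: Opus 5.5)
We argue by contraposition, in the same way as in the proof of Lemma~\ref{lem:DatalogSimulateSubProgram}. It suffices to show the following: whenever the $k$-program derives an IDB $P$ on a tuple $t$ on $(\expansion(\A),L)$, the $(k+\maxArity)$-program derives $P$ on $t$ on $(\A,L)$. Applied to the goal predicate, this yields the statement. The proof is by induction on the length of the derivation. Note that the IDBs of the $k$-program are also IDBs of the $(k+\maxArity)$-program, so the claim is meaningful.

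Consider the last rule $P(x)\;{:}{-}\;\phi$ used, applied with satisfying assignment $s$. The only conjuncts of $\phi$ not available on $\A$ are the new EDB conjuncts $\pi_J(R)(z)$, where $R\in\tau$ and $J\subseteq[k']$, with $k'$ the arity of $R$. For each such conjunct, $s(z)\in\pi_J(R^{\A})$, so there is a tuple $u\in R^{\A}$ with $u_J=s(z)$. We replace the conjunct $\pi_J(R)(z)$ by $R(z,w)$, where $w$ is a tuple of fresh variables of length $k'-|J|\le\maxArity$. We extend $s$ by mapping $w$ to $u_{[k']\setminus J}$, and then existentially project $w$ away again.

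Doing this for all such conjuncts at once could add too many variables. We therefore handle them one at a time. The key steps, in order, are as follows.
\begin{itemize}
\item First use the inductive hypothesis to derive the body IDB $Q(z_0)$ on $\A$.
\item Then process the projected conjuncts one by one by means of intermediate linear rules. Each intermediate rule has body $Q'(v)\wedge R(z,w)$ and head $Q''(v')$. Here $v$ is a set of at most $k$ variables from $\phi$, and $Q'$, $Q''$ are the relations on $B$ pp-definable from the part of $\phi$ processed so far, with the processed conjuncts projected onto the variables that are still needed. Each such rule has at most $k+\maxArity$ variables, and its validity in $\B^\ast$ is immediate from $R^{\B}\subseteq$ the relation defining $\pi_J(R)$.
\item Finally, the unprojected EDB conjuncts of $\phi$ are kept unchanged and attached to the last rule.
\end{itemize}
The reason this works is that $\phi$ itself has at most $k$ variables. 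So every intermediate IDB can be taken to be $\exists$-projections onto subsets of the $\le k$ variables of $\phi$, and each step introduces only the $\le\maxArity$ fresh variables $w$.

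The main obstacle is symmetry. Each intermediate rule must also satisfy the third condition of the canonical program: its reversed rule must have at most $k+\maxArity$ variables and must be valid. To secure this, I would define each intermediate IDB as the \emph{strongest} relation available, namely the set of all assignments to the current $\le k$ variables (a subset of $B^{|v|}$) that are consistent with $Q$ and the already processed conjuncts. With this choice, the reversed implication $Q''(v')\wedge R(z,w)\Rightarrow Q'(v)$ holds in $\B^\ast$, because $Q'$ already constrains only variables that still appear, and no information is lost by projecting out $w$. This is the same bookkeeping used in the proof of Lemma~\ref{lem:DatalogSimulateSubProgram}, where conjuncts were merged as $Q\times Q'$. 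Carrying out this bookkeeping carefully, with the variable counts $k$ for the IDB plus $\maxArity$ for $w$, should complete the induction.
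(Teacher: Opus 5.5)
Your plan matches the paper's: induction on the derivation, replacing each projected atom $\pi_J(R)(y)$ by $R(y')$ with at most $\maxArity$ fresh variables, and absorbing the projected atoms $E_1(y_1),\dots,E_m(y_m)$ into the IDB one at a time. Your intermediate rules are indeed reversible. The gap is the one rule whose reversibility you do not check: the rule that finally derives $P(x)$. With forward-accumulated IDBs it reads $P(x)\;{:}{-}\;Q_{m-1}(v)\wedge R_m(y_m')\wedge\psi$ (or $P(x)\;{:}{-}\;Q_m(v)\wedge\psi$), where $Q_{m-1}$ encodes $Q(z_0)\wedge E_1(y_1)\wedge\dots\wedge E_{m-1}(y_{m-1})$.

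For this to be a rule of the canonical symmetric program, the implication $P(x)\wedge E_m(y_m)\wedge\psi\Rightarrow Q(z_0)\wedge E_1(y_1)\wedge\dots\wedge E_{m-1}(y_{m-1})$ must be valid in $\B^\ast$. The original rule only yields $Q(z_0)$ when \emph{all} $E_i(y_i)$ are in the premise, and nothing on the $P$-side restores the atoms you absorbed on the $Q$-side. For example, take $P(x)\;{:}{-}\;Q(y_1,y_2)\wedge E_1(y_1)\wedge E_2(y_2)$ with $Q=E_1\times E_2$, $P$ full and $y_1,y_2$ disjoint. This rule is in the $k$-program, but the reverse of your last rule would force $E_1$ to be full (for nonempty $E_2$). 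Likewise, a rule with no IDB in its body needs no reverse, but after splitting its last piece does. So choosing the strongest intermediate relations does not remove the symmetry problem; it only moves it to the last rule.

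The missing idea is to carry the absorbed atoms across and discharge them on the $P$-side.
\begin{itemize}
\item After reaching $(Q\wedge E_1\wedge\dots\wedge E_m)(z_0,y_1,\dots,y_m)$, apply $(P\wedge E_1\wedge\dots\wedge E_m)(x,y_1,\dots,y_m)\;{:}{-}\;(Q\wedge E_1\wedge\dots\wedge E_m)(z_0,y_1,\dots,y_m)\wedge\psi$. Its body and reversed body have exactly the variables of the original rule and its reverse, so at most $k$. It is valid in both directions because both sides contain every $E_i$.
\item Then remove the $E_i$ one at a time with $(P\wedge E_{i+1}\wedge\dots\wedge E_m)(x,y_{i+1},\dots,y_m)\;{:}{-}\;(P\wedge E_i\wedge\dots\wedge E_m)(x,y_i,\dots,y_m)\wedge R_i(y_i')$. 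This rule has at most $k+\maxArity$ variables and is reversible, since $R_i(y_i')$ implies $E_i(y_i)$.
\end{itemize}

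The paper's proof uses exactly your split: $(Q\times\pi_J(R))(z,y)\;{:}{-}\;Q(z)\wedge R(y')$, followed by $P(x)\;{:}{-}\;(Q\times\pi_J(R))(z,y)\wedge\psi$. It merely asserts that the reversed rules lie in the programs. The reverse $(Q\times\pi_J(R))(z,y)\;{:}{-}\;P(x)\wedge\psi$ has the same defect. The same repair is needed there: pass through $(P\times\pi_J(R))(x,y)$ and finish with $P(x)\;{:}{-}\;(P\times\pi_J(R))(x,y)\wedge R(y')$.
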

\begin{proof}
Let $P(x) \; {:}{-} \; \phi$ be a rule of the $k$-program, where $\phi$ contains an (EDB) conjunct $\pi_J(R)(y)$. Let $n$ be the arity of $R$ and let $y'$ be an $n$-tuple of variables such that $y'_J=y$ and the remaining variables of $y'$ are fresh. If $\phi$ contains no IDB, then let $\psi$ be the conjuncts of $\phi$ without $\pi_J(R)(y)$.
Consider the rules
\begin{align*}
P(x) &\; {:}{-} \; \pi_J(R)(y)\wedge \psi\\
    \pi_J(R)(y) &\; {:}{-} \; R(y')
\end{align*}
where $\pi_J(R)(y)$ in the first rule is an (IDB) conjunct. The body of the first rule has at most $k$ variables and the body of the second rule at most $\maxArity$.

If $\phi$ contains an (IDB) conjunct $Q(z)$, then let $\psi$ be the conjuncts of $\phi$ without $Q(z)$ and $\pi_J(R)(y)$. Consider the rules
\begin{align*}
P(x) &\; {:}{-} \; Q\times\pi_J(R)(z,y)\wedge \psi\\
    Q\times\pi_J(R)(z,y) &\; {:}{-} \; Q(z)\wedge R(y')
\end{align*}
The body of the first rule has at most $k$ variables and the body of the second rule has at most $k+\maxArity$ variables.

In both cases we can simulate the application of the rule $P(x) \; {:}{-} \; \phi$ with a rule of the $(k+\maxArity)$-program and a rule of the $k$-program that uses one less (EDB) conjunct of the form $\pi_J(R)$. Note that for all rules the reversed rule is also a rule of the corresponding program. 
The lemma follows by induction.
\end{proof}

\begin{lemma}\label{lem:DatalogSimulateUnary}
Let $(\A,L)$ be an instance, $x\in A$, $U\subseteq B$, and let $L'$ be obtained from $L$ by changing the value at $x$ to $L(x)\cap U$.
If $(\A,L')$ does not pass the $k$-test, then the $(k+1)$-program can derive $B\setminus U$ on $x$.    
\end{lemma}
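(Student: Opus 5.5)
We prove the statement by transforming a derivation of the goal predicate on $(\A,L')$ by the $k$-program into a derivation of the IDB $B\setminus U$ on $x$ by the $(k+1)$-program on $(\A,L)$. The one new unary fact in $(\A,L')$ is that $x$ lies in the unary relation $(L(x)\cap U)^{\A}$. Each rule in the $k$-derivation will be replaced by a rule whose IDBs carry one extra coordinate, and that coordinate is pinned to $x$. The derived facts then take the form ``either the original IDB holds, or the value at $x$ lies outside $U$''. In symmetric linear Datalog this bookkeeping is natural, because both the forward and the reversed rule are available.

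Concretely, for every IDB $Q$ of arity $\ell\le k$ define the $(\ell+1)$-ary relation
\[Q^{U}\coloneqq (Q\times B)\cup(B^{\ell}\times (B\setminus U)).\]
I would prove by induction on the length of the derivation that whenever the $k$-program derives $Q$ on a tuple $t$ in $(\A,L')$, the $(k+1)$-program derives $Q^{U}$ on $(t,x)$ in $(\A,L)$. Take a rule $P(v)\;{:}{-}\;Q(w)\wedge\psi$ applied with assignment $s$. Add a fresh variable $u$ with $s(u)=x$, and in $\psi$ replace every EDB conjunct $(L'(x))(y)$ with $s(y)=x$ by the conjunct $y=u$ together with $(L(x))(u)$. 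The new rule is $P^{U}(v,u)\;{:}{-}\;Q^{U}(w,u)\wedge\psi'$. It has at most $k+1$ variables, and it is valid in $\B^\ast$: if $u\notin U$ the head holds trivially; otherwise $u\in L(x)\cap U$, and the old rule's validity gives the head. Validity of the reversed rule, which is needed for membership in the canonical symmetric program, follows by the same case split, because the original rule of the $k$-program has a valid reverse. Rules without an IDB in the body are handled the same way, and they produce $P^{U}$ outright. Since the equality conjunct $y=u$ is allowed, identifying the position of $x$ is harmless.

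When the goal predicate $\emptyset$ (of arity $0$) is finally derived, we obtain $\emptyset^{U}=B\setminus U$ on $x$, which is exactly the claim. I expect the main obstacle to be checking that every transformed rule, together with its reverse, really belongs to the $(k+1)$-program under the paper's precise definition. The reverse condition demands validity of $(\phi_0\wedge\phi_2\wedge\dots)\Rightarrow\phi_1$ with at most $k+1$ variables, and there the case $u\in B\setminus U$ must again make the body IDB $Q^{U}(w,u)$ true automatically. This holds by the definition of $Q^{U}$, so the $\cup$-construction is chosen precisely to make both directions valid. A second subtlety is an occurrence of $x$ inside the IDB tuple $w$ or inside $v$. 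This is handled by the same identification with $u$, which costs no extra variables.
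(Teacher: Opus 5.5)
Your proposal is correct and is essentially the paper's proof: your $Q^{U}$ is the paper's relation $U\rightarrow Q$, with the extra coordinate placed last rather than first and with the intended reading ``$a\in U$ implies $t\in Q$''. Replacing $(L(x)\cap U)(y)$ by $(L(x))(y)$, identifying $y$ with the fresh variable pinned to $x$, and concluding with $\emptyset^{U}=B\setminus U$ on $x$ is exactly the paper's construction; you additionally verify what the paper only asserts (that each new rule and its reverse is valid in $\B^\ast$ with at most $k+1$ variables), and the one formality both versions skip — in an IDB-free rule with no replaced conjunct, $u$ occurs only in the head — is fixed by adding the conjunct $(L(x))(u)$ to the body.
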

\begin{proof}
    Consider a derivation using the rules $R_1,\dots,R_n$ of the $k$-program that derives the goal predicate on $(\A,L')$. Let $s_1,\dots,s_n$ be the satisfying assignments of the variables in the bodies of the rules. We construct rules $R'_1,\dots,R'_n$ as follows
    \begin{enumerate}
        \item replace each IDB $P(y)$ (in the head and the body) with the IDB $(U\rightarrow P)(z,y)$, where $z$ is a fresh variable and $U\rightarrow P$ is the relation $\{(a,t)\mid a\in B\text{ implies } t\in P\}$ and
        \item if there is a variable $y$ such that $s_i(y)=x$ and the body of $R_i$ contains the (EDB) conjunct $(L(x)\cap U)(y)$,  then replace $(L(x)\cap U)(y)$ with $(L(x))(y)$ and identify $y$ with the new variable $z$ introduced in the previous step.
    \end{enumerate}
    The new rules are rules of the $(k+1)$-program that derive $U\rightarrow G$ on $x$ on $(\A,L)$, where $G=\emptyset$ is the goal predicate. Note that $U\rightarrow G=B\setminus U$.
\end{proof}

\section{Further Results}

With a bit of work we can generalize Theorem~\ref{thm:main} to a much larger class of structures. 

\begin{corollary}\label{cor:main}
    Let $\B$ be a finite structure, such that any subset of $B$ of size at most three is pp-definable in $\B$, then the following are equivalent
    \begin{enumerate}
        \item $\csp(\B)$ is solved by a symmetric linear Datalog program,
        \item the polymorphisms of $\B$ contain a Hagemann-Mitschke chain and a 3-4WNU,
        \item $\B$ can neither pp-construct $\stCon$ nor $\lin p$ for any prime $p$,
        \item for no $a,b \in B$ distinct can $\B$ pp-define $O_{a,b}$ or $P_{a,b}$.
    \end{enumerate}
\end{corollary}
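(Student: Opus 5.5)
We prove Corollary~\ref{cor:main} by reducing to Theorem~\ref{thm:main}, applied to a suitable conservative structure constructed from $\B$. The implications that do not need conservativity are handled first. The implication $(1)\Rightarrow(3)$ holds because symmetric linear Datalog is closed under pp-constructions (Theorem~\ref{symDLIsClosedUnderPPCon}). Together with Theorems~\ref{thm:stconNotInSymDL} and~\ref{thm:3lin2NotInDL}, this handles $\stCon$ and $\lin 2$. For $\lin p$ with $p$ odd we use the standard fact that no Datalog program solves $\csp(\lin p)$ (the Feder--Vardi argument works for every prime). The equivalence $(2)\Leftrightarrow(3)$ is Theorems~\ref{thm:stConIffHM} and~\ref{thm:LinpIff34WNU}. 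The implication $(3)\Rightarrow(4)$ is immediate: if $\B$ pp-defined $O_{a,b}$ together with $\{a\}$ and $\{b\}$ (singletons are pp-definable by hypothesis), it would pp-define a copy of $\stCon$; likewise $P_{a,b}$ would give a copy of $\lin 2$. The real work is therefore $(4)\Rightarrow(1)$.

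For $(4)\Rightarrow(1)$, let $\B^c$ be the expansion of $\B$ by all unary relations $U\subseteq B$. Then $\B^c$ is conservative, and $\csp(\B)$ trivially reduces to $\csp(\B^c)$. By Theorem~\ref{thm:main}, it suffices to show that $\B^c$ does not pp-define $O_{a,b}$ or $P_{a,b}$ for any distinct $a,b$. Both relations live on the two-element set $\{a,b\}$.

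The key step is the following claim: if $\B^c$ pp-defines a relation $R\subseteq\{a,b\}^m$, then $\B$ pp-defines some relation $R'$ with $R'\cap\{a,b\}^m=R$ and whose projections to each coordinate lie in small sets. Since $\{a,b\}$ is pp-definable in $\B$ (it has size $2\le 3$), we then get $R=R'\cap\{a,b\}^m$ pp-definable in $\B$, which contradicts (4).

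To prove the claim, I would argue via polymorphisms. Relations pp-definable in $\B^c$ are exactly those preserved by the conservative polymorphisms of $\B$. Relations on $\{a,b\}$ pp-definable in $\B$ are those preserved by $\mathrm{Pol}(\B)$ restricted to $\{a,b\}$, and this restriction makes sense because $\{a,b\}$ is pp-definable. So we must show that every polymorphism of $\B$, restricted to $\{a,b\}$, agrees on the relevant tuples with a conservative one. Here the hypothesis on three-element sets enters: every polymorphism of $\B$ preserves every subset of size at most $3$, so it is conservative on all $\le3$-element sets.

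For $O_{a,b}$ and $P_{a,b}$ we only need to rule out definability by looking at binary and ternary operations that witness non-preservation. Concretely, $O_{a,b}$ fails to be preserved exactly when some polymorphism acts on $\{a,b\}$ without being monotone in the right sense. A finite set of tuples, each involving at most three distinct values from $\{a,b\}$ in each coordinate, certifies preservation or failure. So restricting $\mathrm{Pol}(\B)$ to $\{a,b\}$ gives the same clone as restricting the conservative part. The cleanest way to say this is that $\mathrm{Pol}(\B)|_{\{a,b\}}$ consists of conservative operations on $\{a,b\}$, because $\{a,b\}$ and its singletons are preserved. This yields a reduction: $\B$ pp-defines a relation on $\{a,b\}$ iff $\B^c$ does.

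The main obstacle is making this step precise without appealing to infinitary arguments. The worry is that pp-definitions in $\B^c$ may use unary relations on large subsets that are not definable in $\B$, and the Galois correspondence compares $\mathrm{Pol}(\B^c)$ with all of $\mathrm{Pol}(\B)$, not just its restriction. I would handle this with the following argument. Suppose $R\subseteq\{a,b\}^m$ is not pp-definable in $\B$. Then there is an $n$-ary polymorphism $f$ of $\B$ and tuples $t^1,\dots,t^n\in R$ with $f(t^1,\dots,t^n)\notin R$. All arguments of $f$ come from $\{a,b\}$, and $f$ preserves $\{a,b\}$. Now use the fact that $R\in\{O_{a,b},P_{a,b}\}$ has at most $4$ tuples, so $n$ can be taken small (at most $|R|$). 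Next, build a conservative polymorphism of $\B$ with the same behaviour on $\{a,b\}^n$. One candidate is to compose $f$ with suitable unary polymorphisms so that it becomes idempotent, then use conservativity on $\le3$-element sets to ensure it preserves every subset. If that fails in general, I would fall back on pp-constructing $\stCon$ or $\lin2$ directly in $\B$ from $\B^c$'s definition, using that the subsets of size at most three already suffice to simulate the list restrictions that occur in the relevant gadgets.
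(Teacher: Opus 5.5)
\textbf{Verdict: there is a genuine gap, in the $P_{a,b}$ case of $(4)\Rightarrow(1)$.}

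\textbf{What works.} Your treatment of $(1)\Rightarrow(3)$, $(2)\Leftrightarrow(3)$ and $(3)\Rightarrow(4)$ is fine. So is the reduction of $(4)\Rightarrow(1)$ to showing that $\B^c$ pp-defines neither $O_{a,b}$ nor $P_{a,b}$. The $O_{a,b}$ case also goes through if you spell out what your arity bound gives. A failure of preservation of the three-tuple relation $O_{a,b}$ is witnessed by a ternary minor of a polymorphism of $\B$. A ternary operation preserving all subsets of size at most three is conservative, hence a polymorphism of $\B^c$. The paper does the same thing syntactically: it replaces every unary conjunct by the at most three-element set of values taken by three witnessing assignments.

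\textbf{Where it fails.} $P_{a,b}$ has four tuples, so your minor argument only yields a $4$-ary polymorphism $f$. Nothing forces a $4$-ary operation that preserves all subsets of size at most three to be conservative: when $|B|\ge 5$, on an input with four distinct values it may return a fifth element. Your proposed repair does not help. Every polymorphism of $\B$ is already idempotent, since singletons are pp-definable. Changing $f$ on inputs with four distinct values can destroy preservation of the relations of $\B$. Your observation that $\mathrm{Pol}(\B)|_{\{a,b\}}$ consists of conservative operations on $\{a,b\}$ is automatic, because idempotent operations on a two-element set are conservative. It does not give what the Galois-connection argument needs, namely that each restriction coincides with the restriction of some polymorphism of $\B^c$. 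So your claim that $\B$ and $\B^c$ pp-define the same relations on $\{a,b\}$ is unproved, and the fallback is not an argument.

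\textbf{The missing idea.} Exploit the affine structure of $P_{a,b}$ instead of trying to make $f$ conservative. Identify $\{a,b\}$ with $\F_2$, assume $\B^c$ pp-defines $P_{a,b}$, and let $f$ be an arbitrary $n$-ary polymorphism of $\B$. For disjoint $S,T\subseteq[n]$, define the ternary minor $h_{T,S}(x,y,z)\coloneqq f(w)$, where $w_i=x$ for $i\in S$, $w_i=y$ for $i\in T$, and $w_i=z$ otherwise. Since it is ternary, $h_{T,S}$ is conservative, hence a polymorphism of $\B^c$, hence it preserves $P_{a,b}$. An idempotent ternary operation on $\F_2$ preserving $P_{0,1}$ satisfies $h(u)+h(v)+h((1,1,1)+u+v)=1$, which forces it to be $\F_2$-linear. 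Therefore $f(1_{S\cup T})=h_{T,S}(1,1,0)=h_{T,S}(1,0,0)+h_{T,S}(0,1,0)=f(1_S)+f(1_T)$. So $f$ is $\F_2$-linear on $\{a,b\}^n$ with $f(1,\dots,1)=1$, and thus preserves $P_{a,b}$. Since $f$ was arbitrary and $\{a,b\}$ is pp-definable in $\B$, $P_{a,b}$ is pp-definable in $\B$, contradicting (4). This is the paper's argument, and it only ever needs ternary minors to be conservative, which is exactly what the size-three hypothesis provides.
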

\begin{proof}
The implications $(1)\Rightarrow (3)\Leftrightarrow (2)\Rightarrow(4)$ are as in the proof of Theorem~\ref{thm:main}. 

$(4)\Rightarrow (1)$:
Let $\B_c$ be the expansion of $\B$ by all subsets of $B$.  If there are no $a,b \in B$ distinct such that can $\B_c$ pp-define $O_{a,b}$ or $P_{a,b}$, then Theorem~\ref{thm:main} implies $\csp(\B_c)$ is solved by a symmetric linear Datalog program. Hence, also $\csp(\B)$ is solved by a symmetric linear Datalog program. 
We have two cases to consider.
Assume there are $a,b\in B$ distinct, such that $\B_c$ can pp-define $O_{a,b}$. Let $\phi$ be a pp-formula with $\phi^{\B_c}=O_{a,b}$. Let $s_1,s_2,s_3$ be three satisfying assignments witnessing the three elements in $\phi^{\B_c}$. Construct $\psi$ from $\phi$ by removing all unary conjuncts and adding for each variable $x$ the conjunct $\{s_1(x),s_2(x),s_3(x)\}(x)$. Clearly, $\psi^{\B_c}=\phi^{\B_c}$. Since $\{s_1(x),s_2(x),s_3(x)\}$ is a unary relation that contains at most three elements it is a relation of $\B$. Therefore, $\psi$ can be interpreted in $\B$ and $\psi^{\B}=\psi^{\B_c}=\phi^{\B_c}=O_{a,b}$. A contradiction.

For the second case assume there are $a,b\in B$ distinct, such that $\B_c$ can pp-define $P_{a,b}$. We show that $P_{a,b}$ is preserved by all polymorphisms of $\B$, contradicting that $P_{a,b}$ is not pp-definable in $\B$. Let $f\colon\B^n\to \B$ be a polymorphism. For readability assume $a=0$ and $b=1$. Define $F\coloneqq\{0,1\}$.
Since $f$ preserves the two-element unary set $F$ we have $f(v)\in F$ for all $v\in F^n$. Therefore, $f$ gives us a map between the two $\F_2$-vector-spaces $\F_2^n$ and $\F_2$.
Every $v\in \F_2^n$ can be written as $1_S$, where $S\coloneqq\{i\in[n]\mid v_i=1\}$. Let $S,T\subseteq[n]$ with $S\cap T=\emptyset$. Define $h_{T,S}\colon B^3\to B,(x,y,z)\mapsto f(w)$, where $w_S=(x,\dots,x)$, $w_T=(y,\dots,y)$, and $w_{[n]\setminus(T\cup S)}=(z,\dots,z)$. The map $h_{T,S}$ is a minor of $f$. Hence, it is also a polymorphism of $\B$. Since $h$ is ternary and preserves all subsets of size at most three it already preserves all subsets. Therefore, $h$ is a polymorphism of $\B_c$ and preserves $P_{0,1}$. Hence,
\begin{align*}
    h(u)+h(v)+h((1,1,1)+u+v)=1\text{ for all $u,v\in \F_2^3$.}\\
\end{align*}
So $h((1,1,1)+v)=h(0,0,0)+h((1,1,1)+(0,0,0)+v)=1-h(v)=1+h(v)$ and
$h(u)+h(v)=1-h((1,1,1)+u+v)=1-1-h(u+v)=h(u+v)$. Therefore, $h$ is $\F_2$-linear on $F$ and
\begin{align*}
    f(1_{S\cup T})=h(1,1,0)=h(1,0,0)+h(0,1,0)=f(1_S)+f(1_T) \tag{$\ast$}
\end{align*}
Let $S,T\subseteq [n]$ and define $U\coloneqq S\cap T$, $S'\coloneqq S\setminus U$ and $T'\coloneqq T\setminus U$. Then
\begin{align*}
    f(1_{S} +1_{T})
    =f(1_{S'\cup T'})
    \stackrel\ast=f(1_{S'})+f(1_{T'})
    =f(1_{S'})+f(1_{T'})+2\cdot f(1_U)
    \stackrel\ast=f(1_S)+f(1_T).
\end{align*}
Hence, $f$ is $\mathbb F_2$-linear on $F$ and therefore it preserves $P_{0,1}$, as desired.
\end{proof}
Observe that proving $(2)\Rightarrow(1)$ directly is also possible: Hagemann-Mitschke chains use only ternary function symbols and Corollary~2.9 in \cite{JMMM} gives a minor condition that is (for finite structures) equivalent to 3-4WNU and uses only ternary function symbols. 
Hence, the polymorphisms of $\B_c$ contain a Hagemann-Mitschke chain and a 3-4WNU and (1) follows from Theorem~\ref{thm:main}.

\begin{DeclarationonGenerativeAI*}
ChatGPT6.0 Astra was used to solve the problem. The workflow for this article was as follows
\begin{center}
    \begin{tikzpicture}
    \def\w{2cm}
        \node[rectangle,draw,text width=1.5cm,align=center] (0) at (0,0) {research question};
        \node[rectangle,draw,text width=1.4cm,align=center] (1) at (2.6,0) {initial AI draft};
        \node[rectangle,draw,text width=1.7cm,align=center] (2) at (5.2,0) {simplified AI draft};
        \node[rectangle,draw,text width=2.8cm,align=center] (3) at (8.7,0) {author understands the proof};
        \node[rectangle,draw,text width=1.4cm,align=center] (4) at (12.2,0) {human paper};
        \path[->]
        (0) edge node[above] {(1)} (1)
        (1) edge node[above] {(2)} (2)
        (2) edge node[above] {(3)} (3)
        (3) edge node[above] {(4)} (4)
        ;
    \end{tikzpicture}
\end{center}
Here step (1) is purely AI, step (2) was discussion of the author with AI, and step (3) and (4) are AI free. In particular, the document for the final paper was written by the author and is not an adaption of the AI draft. 
\end{DeclarationonGenerativeAI*}

\bibliographystyle{abbrv}
\bibliography{global}

\def\cprime{$'$} \def\cprime{$'$} \def\cprime{$'$}
\begin{thebibliography}{10}

\bibitem{BodDalJournal}
M.~Bodirsky and V.~Dalmau.
\newblock Datalog and constraint satisfaction with infinite templates.
\newblock {\em Journal on Computer and System Sciences}, 79:79--100, 2013.
\newblock A preliminary version appeared in the proceedings of the Symposium on
  Theoretical Aspects of Computer Science (STACS'05).

\bibitem{BulatovFVConjecture}
A.~A. Bulatov.
\newblock A dichotomy theorem for nonuniform {CSP}s.
\newblock In {\em 58th {IEEE} Annual Symposium on Foundations of Computer
  Science, {FOCS} 2017, {B}erkeley, {CA}, {USA}, {O}ctober 15-17}, pages
  319--330, 2017.

\bibitem{LinearDatalog}
V.~Dalmau.
\newblock Linear {D}atalog and bounded path duality of relational structures.
\newblock {\em Logical Methods in Computer Science}, 1(1), 2005.

\bibitem{DalmauLICS15}
V.~Dalmau, L.~Egri, P.~Hell, B.~Larose, and A.~Rafiey.
\newblock Descriptive complexity of list h-coloring problems in logspace: {A}
  refined dichotomy.
\newblock In {\em 30th Annual {ACM/IEEE} Symposium on Logic in Computer
  Science, {LICS} 2015, Kyoto, Japan, July 6-10, 2015}, pages 487--498. {IEEE}
  Computer Society, 2015.

\bibitem{EgriLaroseTessonLogspace}
L.~Egri, B.~Larose, and P.~Tesson.
\newblock Symmetric {D}atalog and constraint satisfaction problems in logspace.
\newblock In {\em Proceedings of the Symposium on Logic in Computer Science
  ({LICS})}, pages 193--202, 2007.

\bibitem{EgriLT08}
L.~Egri, B.~Larose, and P.~Tesson.
\newblock Directed st-connectivity is not expressible in symmetric {D}atalog.
\newblock In {\em Automata, Languages and Programming, 35th International
  Colloquium, {ICALP} 2008, Reykjavik, Iceland, July 7-11, 2008, Proceedings,
  Part {II} - Track {B:} Logic, Semantics, and Theory of Programming {\&} Track
  {C:} Security and Cryptography Foundations}, pages 172--183, 2008.

\bibitem{FederVardi}
T.~Feder and M.~Y. Vardi.
\newblock The computational structure of monotone monadic {SNP} and constraint
  satisfaction: {a} study through {D}atalog and group theory.
\newblock {\em {SIAM} Journal on Computing}, 28:57--104, 1999.

\bibitem{JMMM}
J.~Jovanovi\'c, P.~Markovi\'c, R.~McKenzie, and M.~Moore.
\newblock Optimal strong {M}al'cev conditions for congruence
  meet-semidistributivity in locally finite varieties.
\newblock {\em Algebra Universalis}, 76:305--325, 2016.

\bibitem{Kazda-n-permute}
A.~Kazda.
\newblock {$n$-permutability and linear {D}atalog implies symmetric {D}atalog}.
\newblock {\em {Logical Methods in Computer Science}}, {Volume 14, Issue 2},
  Apr. 2018.

\bibitem{StarkeDiss}
F.~Starke.
\newblock Digraphs modulo primitive positive constructability.
\newblock Preprint, 2024.
\newblock PhD dissertation, Institute of Algebra, TU Dresden.

\bibitem{ZhukFVConjecture}
D.~N. Zhuk.
\newblock A proof of {CSP} dichotomy conjecture.
\newblock In {\em 58th {IEEE} Annual Symposium on Foundations of Computer
  Science, {FOCS} 2017, {B}erkeley, {CA}, {USA}, {O}ctober 15-17}, pages
  331--342, 2017.
\newblock \url{https://arxiv.org/abs/1704.01914.}

\end{thebibliography}

\end{document}